\documentclass[11pt]{article}
\usepackage{amsmath, amssymb, amsthm, verbatim,enumerate,bbm}
\usepackage{indentfirst}

\date{}
\title{\vspace{-0.9cm}Finding Hamilton cycles in random graphs with few queries}
\author{
Asaf Ferber \thanks{Department of Mathematics, Yale University, and
Department of Mathematics, MIT. Emails: asaf.ferber@yale.edu, and
ferbera@mit.edu.} \and Michael Krivelevich \thanks{School of
Mathematical Sciences, Raymond and Beverly Sackler Faculty of Exact
Sciences, Tel Aviv University, Tel Aviv, 6997801, Israel. Email:
krivelev@post.tau.ac.il. Research supported in part by USA-Israel
BSF Grant 2010115 and by grant 912/12 from the Israel Science
Foundation.}\and Benny Sudakov
\thanks{Department of Mathematics, ETH, 8092 Zurich, Switzerland.
Email: benjamin.sudakov@math.ethz.ch. Research supported in part by
SNSF grant 200021-149111.} \and Pedro Vieira \thanks{Department of
Mathematics, ETH, 8092 Zurich, Switzerland.  Email:
pedro.vieira@math.ethz.ch.} }

\parindent 5mm
\parskip 0.2mm
\oddsidemargin  0pt \evensidemargin 0pt \marginparwidth 0pt
\marginparsep 0pt
\topmargin 0pt
\headsep 0pt
\textheight 8.8in
\textwidth 6.6in

\allowdisplaybreaks

\theoremstyle{plain}
\newtheorem{theor}{Theorem}
\newtheorem{theorem}{Theorem}[section]
\newtheorem{lemma}[theorem]{Lemma}
\newtheorem{claim}[theorem]{Claim}
\newtheorem{proposition}[theorem]{Proposition}

\newtheorem{definition}[theorem]{Definition}

\newcommand{\Bin}{\ensuremath{\textrm{Bin}}}

\begin{document}
\maketitle
\begin{abstract}
We introduce a new setting of algorithmic problems in random
graphs, studying the minimum number of queries one needs to ask about the adjacency between pairs of vertices of ${\mathcal G}(n,p)$ in order to typically find a
subgraph possessing a given target property. We show that if $p\geq
\frac{\ln n+\ln\ln n+\omega(1)}{n}$,
then one can find a Hamilton cycle with high probability after exposing $(1+o(1))n$ edges.
Our result is tight in both $p$ and the number of exposed edges.
\end{abstract}

\section{Introduction}

Random Graphs is definitely one of the most popular areas in modern
Combinatorics, also having a tremendous amount of applications in
different scientific fields such as Networks, Algorithms, Communication,
Physics, Life Sciences and more. Ever since its introduction, the
\emph{binomial random graph model} has been one of the main objects
of study in probabilistic combinatorics. Given a positive integer
$n$ and a real number $p \in [0,1]$, the binomial random graph
$\mathcal G(n,p)$ is a probability space whose ground set consists
of all labeled graphs on the vertex set $[n]$. We can describe the
probability distribution of $G\sim \mathcal G(n,p)$ by saying that
each pair of elements of $[n]$ forms an edge in $G$ independently
with probability $p$. For more details about random graphs the
reader is referred to the excellent books of Bollob\'as
\cite{bollobas1998random} and of Janson, \L uczak and Ruci\'nski
\cite{JansonLuczakRucinski}.

Due to the importance and visibility of the subject of Random
Graphs, and also due to its practical connections and the fact that
random discrete spaces are frequently used to model real world
phenomena, it is only natural to study the algorithmic aspects of
random graphs. The reader is advised to consult an excellent survey
of Frieze and McDiarmid on the subject \cite{FM97}, providing an
extensive coverage of the variety of problems and results in
Algorithmic Theory of Random Graphs. In this paper we present an
apparently new and interesting setting for algorithmic type
questions about random graphs.

Usually, questions considered in random graphs have the following
generic form: given some \emph{monotone increasing graph property
${\cal P}$} (that is, a property of graphs that cannot be violated
by adding edges) and a function $p = p(n) \in [0,1]$, determine
whether a graph $G\sim \mathcal G(n,p)$ satisfies ${\cal P}$ {\em
with high probability} (whp) (that is, with probability tending to
$1$ as $n$ tends to infinity). In order to solve questions of this
type, one should show that after asking for all possible pairs
$(i,j)$ of distinct elements of $[n]$ the question ``is $(i,j)\in
E(G)$?" and getting a positive answer with probability $p(n)$
independently, whp the graph $G$ obtained from all positive answers
possesses $\mathcal P$. Here we propose a different task. Given $p$
such that a graph $G\sim \mathcal G(n,p)$ whp satisfies $\mathcal
P$, we want to devise an algorithm, probably an adaptive one, that
asks typically as few queries ``is $(i,j)\in E(G)$?" as possible,
and yet the positive answers reveal us a graph which possesses
$\mathcal P$. We refer to such an algorithm as \emph{an adaptive
algorithm interacting with the probability space $\mathcal G(n,p)$}.
For example, consider the case where $\mathcal P$ is the property
``containing a Hamilton cycle" (i.e. a cycle passing through all the
vertices of the graph). In this case we aim to find an algorithm
that will adaptively query as few pairs as possible, yet a
sufficient amount to get whp a Hamilton cycle between the positive
answers. It is important to remark that we are not concerned here
with the time of computation required for the algorithm to locate a
target structure (thus essentially assuming that the algorithm has
unbounded computational power), but we make the algorithm pay for
the number of queries it asks, or for the amount of communication
with the random oracle generating the random graph. Therefore, in
this sense this setting is reminiscent of such branches of Computer
Science as Communication Complexity and Property Testing.

In general, given a monotone property $\mathcal P$, what can we
expect? If all $n$-vertex graphs belonging to ${\cal P}$ have at
least $m$ edges, then the algorithm should get at least $m$ positive
answers to hit the target property with the required absolute
certainty. This means that the obvious lower bound in this case is
$(1+o(1))m/p$ queries, and therefore, whp one would have $(1+o(1))m$
positive answers. Continuing with our example of Hamiltonicity, this
lower bound translates to $(1+o(1))n$ positive answers. This should
serve as a natural benchmark for algorithms of such type. Of course,
the above described framework is very general and can be fed with
any monotone property ${\cal P}$, thus producing a variety of
interesting questions.

Here is a very simple illustration of our model. Let us choose the target property to be connectedness (i.e., the existence of a spanning tree) in $G\sim\mathcal G(n,p))$. Suppose the edge probability $p(n)$ is chosen to be above the threshold for connectedness, which is known to be equal to $p(n)=\frac{\ln n+\omega(1)}{n}$. In this case the minimum number of positive answers to the algorithm's queries is obviously $n-1$. An adaptive algorithm discovering a spanning tree after $n-1$ positive answers is very simple: start with $T={v}$, where $v\in [n]$ is an arbitrary vertex, and at each step query in an arbitrary order previously non-queried pairs between the current tree $T$ and the vertices outside of $T$ until the first such edge $(u,w)\in G$ has been found, then update $T$ by appending the edge $(u,w)$. Assuming the input graph $G$ is connected, the algorithm clearly creates a spanning tree of $G$ after exactly $n-1$ positive answers.

In this paper we focus on the property of Hamiltonicity, which is
one of the most central notions in graph theory, and has been
intensively studied by numerous researchers. The earlier results on
Hamiltonicity of random graphs were proved by Korshunov
\cite{korshunov1976solution} and by P\'osa
\cite{posa1976hamiltonian}  in 1976. Building on these ideas,
Bollob\'as \cite{bollobas1984evolution}, and Koml\'os and
Szemer\'edi \cite{komlos1983limit} independently showed that for
$p\geq \frac{\ln n+\ln\ln n+\omega(1)}{n}$, a graph $G\sim \mathcal
G(n,p)$ is whp Hamiltonian. This range of $p$ cannot be further
improved since if $p\leq \frac{\ln n+\ln\ln n-\omega(1)}{n}$, then
whp a graph $G\sim \mathcal G(n,p)$ has a vertex of degree at most
$1$, and such a graph is trivially non-Hamiltonian.

In the following theorem, which is the main result of this paper, we
verify what the reader may have suspected: $(1+o(1))n$ positive
answers (and thus, $(1+o(1))n/p$ queries) are enough to create a
graph which contains a Hamilton cycle, for every $p\geq \frac{\ln
n+\ln\ln n+\omega(1)}{n}$.

\begin{theor}\label{thm:main1}
Let $p=p(n)\ge\frac{\ln n+\ln\ln n +\omega(1)}{n}$. Then there exists
an adaptive algorithm, interacting with the probability space
$\mathcal G(n,p)$, which whp finds a Hamilton cycle after getting
$(1+o(1))n$ positive answers.
\end{theor}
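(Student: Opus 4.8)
The plan is to locate the Hamilton cycle essentially edge by edge, so that almost every positive answer is one that will end up on the cycle. In the first, ``bulk'' phase I would run a greedy path--growth procedure: maintain a path $P$ and, if $v$ is its current endpoint, query the still--unexposed pairs $\{v,u\}$ with $u\notin V(P)$ in arbitrary order, appending the first $u$ for which the answer is positive. Each extension costs exactly one positive answer and that answer sits (tentatively) on the final cycle, so it is ``free''. The point is that, conditioned on the history, the pairs not yet queried still behave like independent $p$--coins, so the chance that $v$ has no unexposed neighbour outside $P$ is at most $(1-p)^{|[n]\setminus V(P)|}$, which is $o(1)$ as long as $|[n]\setminus V(P)|=\omega(n/\ln n)$. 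Whenever extension fails I simply start a new path at an arbitrary unused vertex; crucially this costs no positive answer, and since each vertex becomes a path--endpoint exactly once, the expected number of failures over the whole run is at most $\sum_{j\ge 1}(1-p)^{j}=O(1/p)=O(n/\ln n)$. Hence after $n-O(n/\ln n)$ positive answers the bulk phase produces a family of $O(n/\ln n)=o(n)$ vertex--disjoint paths covering $[n]$ --- essentially one path of length $(1-o(1))n$ together with many very short paths sitting on the low--degree vertices of $\mathcal G(n,p)$, which, as in the classical threshold arguments, are few and whp scattered (pairwise non-adjacent, with disjoint neighbourhoods).

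It remains to merge these $o(n)$ paths into a single Hamilton cycle using only $o(n)$ further positive answers. While the number $r$ of paths is still large --- say $r\gg\sqrt{n/\ln n}$ --- this is essentially free: among the $\Theta(r)$ path--endpoints there are $\Theta(r^2 p)\gg 1$ pairs that are edges, so querying endpoint pairs quickly turns up a merge, and that edge lies on the (current) union path. The delicate regime is when few paths remain, where I would use P\'osa's rotation--extension method executed directly against the oracle. Two moves are available: to enlarge the current set $S$ of candidate endpoints (each $s\in S$ carrying a rearranged path from a fixed endpoint $x_0$ to $s$ on the same vertex set) I query chords $\{s,v_i\}$ until a positive answer appears and reroute, gaining a new endpoint at the cost of one positive answer; and to connect $S$ to a target set $T$ --- a second rotation--endpoint set, the remaining path--endpoints, or the set $W$ of leftover (short-path) vertices --- I query the $|S|\,|T|$ pairs between them, so that as soon as $|S|\,|T|\,p\gg 1$ a positive answer appears almost immediately and either shrinks $W$, merges two paths, or closes the cycle. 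Since the number of paths and $|W|$ are both $o(n)$, only $o(n)$ connecting edges are ever needed, and the only genuinely wasted positive answers are the chords spent growing the intermediate endpoint sets (together with the $O(1)$ path edges discarded at each insertion).

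The heart of the matter is bounding these wasted chords when the leftover part still has size of order $n/\ln n$. Absorbing the leftover vertices one at a time is hopeless: to make a query from $S$ to a single remaining leftover vertex succeed one needs $|S|\gtrsim 1/p=n/\ln n$ chords, and even aiming at all of the current leftover set $W'$ at once one needs $|S|\gtrsim 1/(p|W'|)$; summing over $|W'|=1,\dots,n/\ln n$ gives $\Theta(p^{-1}\log n)=\Theta(n)$ wasted chords, which is far too many. The key must therefore be to absorb the leftover vertices in batches: grow a single endpoint set $S$ of size $s$, find a matching of size $\Theta(s)$ between $S$ and $W$ (which exists because $|W|\,p=\Theta(1)$, so a constant fraction of $S$ has a neighbour in $W$), and reroute the path through all $\Theta(s)$ of these leftover vertices simultaneously, so that the amortized waste per absorbed vertex is $O(1)$ and the total waste is $O(|W|)=O(n/\ln n)=o(n)$. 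Making this ``simultaneous rotation along a matching'' rigorous --- keeping track of which pairs are still fresh at the moment they are queried, and using that the low--degree vertices are scattered so that the reroutings through them do not interfere --- is the technical crux; the greedy bulk phase and the generic rotation--extension estimates are, by comparison, routine.
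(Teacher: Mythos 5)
Your bulk phase is sound and is essentially the paper's Phase I in a different guise (the paper uses a randomised DFS, you use greedy path growth; both yield a path covering all but $O(n/\ln n)$ vertices after at most $n$ positive answers, and your $\sum_j(1-p)^j=O(1/p)$ accounting of restarts is correct). One small correction there: the $\Theta(n/\ln n)$ leftover vertices are \emph{not} the low-degree vertices of $\mathcal G(n,p)$ --- they are typical vertices that merely failed to hit the shrinking uncovered set --- so the ``few and scattered'' intuition does not apply to them; what saves you is only that there are $o(n)$ of them and that almost all of their pairs to the long path are still unexposed.

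The genuine gap is exactly where you place it, and the mechanism you propose for it does not work. A P\'osa endpoint set $S$ of size $s$ consists of $s$ endpoints of $s$ \emph{mutually incompatible} rearrangements of the same path: using the matching edge $s_iw_i$ commits you to the particular rotated path ending at $s_i$, after which the other $s_j$ are no longer endpoints of anything. So a matching of size $\Theta(s)$ between $S$ and $W$ lets you absorb \emph{one} vertex of $W$, not $\Theta(s)$ of them, and you are back to the one-at-a-time accounting that you correctly computed to cost $\Theta(p^{-1}\log n)=\Theta(n)$ wasted positive answers. (Direct insertion of $w$ between two consecutive path neighbours also fails here, since $np^2=o(1)$.) The paper circumvents this entirely: it sprinkles edges \emph{inside} the leftover set $\mathcal U$ with a subsampled probability $q'p$, which costs only $|\mathcal U|^2q'p=o(n)$ positive answers because $|\mathcal U|=\Theta(n(\ln n)^{-0.45})$ is already small, thereby turning all but a tiny exceptional set of $\mathcal U$ into a connected expander $\mathrm{EXP}$; the exceptional vertices (sets $\mathrm{TINY}$ and $\mathrm{SMALL}$, of sizes $n^{0.04}$ and $ne^{-(\ln n)^{0.4}}$) are absorbed individually, which is affordable at polylogarithmic waste per vertex precisely because there are so few of them; and $\mathrm{EXP}$ is absorbed in a single shot by attaching it at two vertices of the cycle and building a Hamilton path through it with the booster technique, which wastes $O(1)$ positive answers per absorbed vertex. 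Some such restructuring of the leftover set --- rather than rotations launched from the long cycle --- seems unavoidable, and without it your argument does not close.
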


Note that Theorem \ref{thm:main1} is asymptotically optimal in both
the edge probability $p$ and the number of positive answers we get.
We remark that if we allow (say) $3n$ positive answers, and replace
$p(n)\ge\frac{\ln n+\ln\ln n +\omega(1)}{n}$ with $p(n)\geq
(1+\varepsilon)\ln n/n$, then the result follows quite easily from a
result of Bohman and Frieze \cite{BF} by effectively embedding some
other random space, like a $3$-out, in $\mathcal G(n,p)$ and accessing
it in few queries. Moreover, if our goal is to find a Hamilton cycle after having $(1+o(1))n$ positive answers but we are willing to weaken the assumption to $p=\omega(\log n/n)$, then in fact the problem becomes much easier. In what follows, as an illustrative example, we provide a sketch of a proof for this statement.
\begin{proposition}
 Let $p=\omega(\log n/n)$. Then there exists an adaptive algorithm, interacting with the probability space $\mathcal G(n,p)$, which whp finds a Hamilton cycle after getting $(1+o(1))n$ positive answers.
\end{proposition}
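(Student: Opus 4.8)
The plan is to reveal the edges of $G\sim\mathcal G(n,p)$ online and to build the Hamilton cycle in two stages: first grow a nearly spanning path greedily, then absorb the remaining vertices and close the cycle by rotation--extension. Since $p=\omega(\log n/n)$, set $t=t(n):=np/\ln n\to\infty$; then $\ln n/p=n/t=o(n)$, and this quantity will be both the size of the leftover set and (up to a logarithmic factor) the entire cost of the clean-up, which is exactly what the budget allows.

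For the first stage I would run the obvious procedure: maintain a path $P$, initially a single vertex, and at each step query the current endpoint against the currently unused vertices, one at a time in arbitrary order, appending the first neighbour found. A vertex becomes an endpoint only after joining $P$, and at that point none of its adjacencies to the still-unused vertices has been queried, so at a step with $k$ unused vertices the extension fails with probability $(1-p)^k\le e^{-pk}$, which is at most $n^{-2}$ provided $k\ge 2\ln n/p$. A union bound over the $\le n$ steps then shows that whp I reach, after exactly $|V(P)|-1\le n-1$ positive answers, a path $P$ whose complement $U$ has size at most $m:=\lceil 2\ln n/p\rceil=o(n)$; note the remaining budget is then only $(1+o(1))n-(n-1)=o(n)$ positive answers.

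For the second stage I would absorb the vertices of $U$ one at a time and finally close up, using P\'osa rotations, and I would begin by recording the standard fact that whp $G$ has the expansion needed to run them --- in particular, when a non-extendable path is rotated keeping one endpoint fixed, the set $R$ of endpoints reachable this way grows to size $\Omega(n)$. I then process $U$ as in the first stage, except that when the current endpoint has no neighbour in $U$ I perform a rotation --- which amounts to querying the endpoint against the interior of $P$ until a neighbour is found, costing many queries but exactly one positive answer --- and retry from the new endpoint, which is a fresh vertex and so has a neighbour in $U$ with probability $1-(1-p)^j$, where $j=|U|$ at that instant. Hence to absorb the vertex that brings $|U|$ from $j$ down to $j-1$ it is enough for $R$ to reach size $\Theta(1/(jp))$, which it does since $1/(jp)\le 1/p=o(n)\ll|R|$; the expected number of rotations used to absorb it is $O(1/(jp))$, and $\sum_{j=1}^m 1/(jp)=(1/p)\sum_{j=1}^m 1/j=O((\log n)/p)=o(n)$ because $1/p=o(n/\log n)$. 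So whp (by Markov's inequality) the whole clean-up uses $o(n)$ rotations, hence $o(n)$ further positive answers; closing the resulting Hamilton path into a cycle is the same computation performed at both ends, again costing $O((\log n)/p)=o(n)$ positive answers. Altogether the algorithm uses $(n-1)+o(n)=(1+o(1))n$ positive answers whp and returns a Hamilton cycle.

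The step I expect to be the main obstacle is the accounting in the second stage --- showing that the rotations needed for the last $m=o(n)$ vertices, which is exactly the regime where greedy extension genuinely stalls, contribute only $o(n)$ positive answers in total. This hinges on two points that must be made precise: that a rotation costs a single positive answer, and that it suffices to grow the reachable-endpoint set only to size $\Theta(1/(jp))$ (rather than to $\Omega(n)$), so that the cost is governed by $\sum_{j=1}^m 1/(jp)=O((\log n)/p)=o(n)$. The other ingredients --- the greedy estimate above, the expansion properties of $\mathcal G(n,p)$ for $p=\omega(\log n/n)$, and the mechanics of rotation--extension and of closing a Hamilton path --- are all standard.
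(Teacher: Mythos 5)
Your Stage 1 is essentially identical to the paper's Phase 1 (greedy online extension down to a leftover set of size $o(n)$), and your observation there that a vertex's adjacencies to still-unused vertices are unqueried at the moment it becomes an endpoint is correct and important. Your Stage 2, however, departs genuinely from the paper: the paper contracts the path to a single vertex, has each leftover vertex select exactly $k$ random in- and out-neighbours at a cost of $O(km)=o(n)$ positive answers, and invokes the Cooper--Frieze theorem that $D_{k\text{-in},k\text{-out}}$ is whp Hamiltonian; you instead propose to absorb the leftover vertices by running P\'osa rotations online, paying one positive answer per rotation. This is a legitimate alternative strategy, and your cost heuristic $\sum_{j\le m} 1/(jp)=O((\log n)/p)=o(n)$ is the right order of magnitude.

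The gap is exactly where you suspect it, and as written the key sentence is circular: ``it is enough for $R$ to reach size $\Theta(1/(jp))$, which it does since $1/(jp)\ll|R|$'' conflates the static P\'osa endpoint set of the fully revealed graph $G$ (whose size $\Omega(n)$ follows from expansion of $G$, but which you cannot certify without querying edges you have not paid for) with the online process of generating endpoints one rotation at a time. What you actually need, and do not establish, is that the \emph{sequential} rotation walk produces $\Theta(1/(jp))$ distinct endpoints, each of whose adjacencies to the current $U$ are still unqueried and unconditioned, at a cost of one positive answer each. Two specific things must be checked: (a) the new endpoint is \emph{not} ``a fresh vertex'' with respect to $U$ --- during its own Stage-1 extension step it queried a geometric-$(p)$ number ($\approx 1/p$, which is comparable to $|U|=\Theta(\ln n/p)$ up to the log factor) of then-unused vertices negatively, and some of those may lie in $U$, so the success probability $1-(1-p)^j$ needs to be replaced by a bound over the unqueried pairs only, with an argument that only an $o(1)$ fraction of $U$ is affected; and (b) the walk of endpoints can revisit vertices whose $U$-adjacencies are already fully (negatively) exposed, so one must bound the number of such wasted rotations (a birthday-type estimate shows this is $o(T)$ for $T=O((\log n)/p)$ rotations precisely because $np=\omega(\log n)$, but this has to be said). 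Both points are fixable, and indeed the paper's device of querying with sub-probabilities $q_1,q_2$ satisfying $1-p=(1-q_1)(1-q_2)^4$ exists precisely to sidestep this kind of conditioning on earlier negative answers; your single-query-per-pair scheme avoids double-charging positive answers but inherits the conditioning problem, and the proof is incomplete until it is addressed.
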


\begin{proof}[Proof (sketch).] Let $p=f(n)\log n/n$, where $f:=f(n)$ is an arbitrary function tending to infinity with $n$, and set $q_1 = (1-\varepsilon)p$ (where, say, $\varepsilon = f^{-1/3}$) and $q_2$ to be the unique solution to $1-p=(1-q_1)(1-q_2)^4$.
Our proof consists of two phases, where in Phase 1 we find a ``long" path, and in Phase 2 we close it into a Hamilton cycle.

{\bf Phase 1} In this phase we construct a path $P$ of length $t:=n-m$ where $m= n/\sqrt{f}$, while exposing edges in an ``online fashion" with edge probability $q_1$, after exposing (successfully) exactly $t$ edges. At each time step $0\leq \ell\leq t-1$ of this phase we try to extend a current path $P_\ell:=v_0\ldots v_\ell$ by exposing an edge of the form $v_\ell u$, where $u\in [n]\setminus V(P_\ell)$. Note that the probability to fail in step $\ell$ is $(1-q_1)^{n-(\ell+1)}\leq e^{-q_1m}=o(1/n)$,
and therefore, by applying the union bound, whp the algorithm does not terminate unsuccessfully during this phase.

{\bf Phase 2} In this phase we want to turn the path $P:=v_0\ldots v_{t}$ obtained in Phase 1 into a Hamilton cycle. To this end, we define an auxiliary directed graph $D$, based on a subgraph of $G$, and show that a \emph{directed} Hamilton cycle in this graph exists whp, and that such a cycle corresponds to a Hamilton cycle of $G$. Moreover, we show that $D$ contains $O(km)=o(n)$ edges. This will complete the proof.

Let $U:=\left([n]\setminus V(P)\right)\cup \{v_P\}$ and set $V(D)=U$. Let us choose (say) $k:=100$ (actually, any $k\geq 2$ will work, but for a large $k$ the proof of the tool we base our argument on becomes relatively simple).
We choose the \emph{arcs} (directed edges) of $D$ according to the following procedure:
\begin{itemize}
  \item For every $v\in U\setminus \{v_P\}$, we define $$\text{Out}(v)=(U-v)\cup \{v_0\}$$ and $$\text{In}(v)=(U-v)\cup\{v_{t}\}.$$
  Now, in $G$, iteratively expose edges of the form $vu$, $u\in \text{Out}(v)$, with probability $q_2$, independently at random, according to a \emph{random} ordering of $\text{Out}(v)$, until you have \emph{exactly} $k$ successes. Let $x_1,x_2,\ldots,x_k$ denote these successes, and add $vx_i$, $1\leq i\leq k$ as arcs to $E(D)$ (if one of the $x_i$'s is $v_{0}$, then add the arc $vv_P$ to $E(D)$).
  Do the same for edges of the form $uv$, $u\in \text{In}(v)$, where an arc of the form $v_{t}v$ translates to the arc $v_Pv$.
  \item For $v_P$, let $\text{Out}(v_P)=\text{In}(v_P):=U\setminus \{v_P\}$. Expose (in $G$) edges of the form $v_{t}u$, $u\in \text{Out}(v_P)$ with probability $q_2$, independently at random, according to a random ordering of $\text{Out}(v_P)$, until having exactly $k$ successes. For each success $v_{t}u$, add an arc $v_Pu$ to $E(D)$.
      Do the same for $\text{In}(v_P)$, where now one exposes edges of the form $uv_{0}$, $u\in \text{In}(v_P)$. For each edge of the form $uv_{0}$ add the arc $uv_P$ to $E(D)$.
\end{itemize}

It is relatively easy to show that the probability for not having $k$ successes for at least one of the vertices is $o(1)$, and that the choices are made independently and uniformly. Moreover, the number of successively exposed edges is clearly $O(km)= o(n)$.
Therefore, $D$ can be seen as a directed graph obtained by the following procedure: each vertex picks exactly $k$ in- and $k$ out-neighbors, independently, uniformly at random. This model is known as $D_{k-in,k-out}$ and whp contains a directed Hamilton cycle (see the main result of \cite{CF}, or the less complicated one in \cite{CFr}). Clearly, by replacing the vertex $v_P$ by the path $P$ itself, such a Hamilton cycle corresponds to a Hamilton cycle of $G$. Moreover, note that throughout the algorithm (both phases), each edge has been queried at most once with probability $q_1$ (in Phase 1) and at most $4$ times with probability $q_2$ (in Phase 2), and therefore the resulting graph naturally couples as a subgraph of $G\sim \mathcal G(n,p)$. It's not hard to see that if we were to expose with full probability $p$ the edges that have been exposed by this procedure with any non-zero probability, the number of additional successively exposed edges would be $o(n)$, because of our choices of $q_1$ and $q_2$. This completes the proof of this sketch.
\end{proof}

\section{Notation}
Most of the notation used in this paper is fairly standard. Given a natural number $k$ and a set $S$, we use $[k]$ to denote the set $\{1,2,\ldots,k\}$ and $\binom{S}{k}$ to the denote the collection of subsets of $S$ of size $k$.

Given a graph $G$ we use $V(G)$ to denote the set of vertices of $G$. Moreover, given a subset $E$ of the edges of $G$ we shall oftentimes abuse notation and refer to the subgraph of $G$ formed by these edges simply by $E$ (with vertex set $V(G)$ unless stated otherwise).

Given a subset $S\subseteq V(G)$, $G[S]$ denotes the subgraph of $G$ induced by the vertices in $S$, i.e. the graph with vertex set $S$ whose edges are the ones of $G$ between vertices in $S$. Furthermore, we use $e_G(S)$ to denote the number of edges of the graph $G[S]$.

Given a vertex $v\in V(G)$ and a subset $S\subseteq V(G)$, we use $N_G(v)$ to denote the set of neighbours of $v$ in the graph $G$, $N_G(S):=\cup_{v\in S}N_G(v)$ to denote the set of neighbours of vertices in $S$ in the graph $G$ and $N_G(v,S):=N_G(v)\cap S$ to denote the set of neighbours of $v$ in the graph $G$ which lie in the set $S$. Moreover, $d_G(v):=|N_G(v)|$ denotes the degree of $v$ in the graph $G$, $d_G(v,S):=|N_G(v,S)|$ denotes the number of neighbours of $v$ in the graph $G$ which lie in the set $S$, $\Delta(G):=\max_{v\in V(G)}d_G(v)$ denotes the maximum degree of the graph $G$ and finally $\delta(G):=\min_{v\in V(G)}d_G(v)$ denotes the minimum degree of the graph $G$.

A subgraph $P$ of the graph $G$ is called a \textit{path} if $V(P)=\{v_1,\ldots,v_{\ell}\}$ and the edges of $P$ are $v_1v_2$, $v_2v_3$, $\ldots$, $v_{\ell-1}v_{\ell}$. We shall oftentimes refer to $P$ simply by $v_1v_2\ldots v_{\ell}$. We say that such a path $P$ has \textit{length} $\ell-1$ (number of edges) and \textit{size} $\ell$ (number of vertices). We say that $P$ is a \textit{Hamilton path} (in the graph $G$) if it has size $|V(G)|$. Furthermore, a subgraph $C$ of the graph $G$ is called a \textit{cycle} if $V(C)=\{v_1,\ldots,v_{\ell}\}$ and the edges of $P$ are $v_1v_2$, $v_2v_3$, $\ldots$, $v_{\ell-1}v_{\ell}$ and $v_{\ell}v_1$. As for paths, we shall oftentimes refer to $C$ simply by $v_1v_2\ldots v_{\ell}$. We say that such a cycle has \textit{length} $\ell$ (number of edges) and \textit{size} $\ell$ (number of vertices). We say that $C$ is a \textit{Hamilton cycle} (in the graph $G$) if it has size $|V(G)|$. A \textit{trail} of length $t$ in $G$ between two vertices $x$ and $y$ is a sequence of vertices $x=v_0,v_1,\ldots,v_{t}=y$ such that $\{v_0v_1,v_1v_2,\ldots, v_{t-1}v_{t}\}$ is a set of distinct $t$ edges of $G$.

\section{Auxiliary results}

\subsection{Probabilistic tools}

We need to employ standard bounds on large deviations of random
variables. We mostly use the following well-known bound on the lower and upper tails of the Binomial distribution due to Chernoff
(see e.g. \cite{AlonSpencer}, \cite{JansonLuczakRucinski}).

\begin{lemma}\label{Che}
Let $X \sim \emph{\text{Bin}}(n,p)$ and let $\mu=\mathbb{E}[X]$.
Then
\begin{itemize}
    \item $\Pr\left[X\le(1-a)\mu\right]<e^{-a^2\mu/2}$ for every
    $a>0$;
    \item $\Pr\left[X\ge(1+a)\mu\right]<e^{-a^2\mu/3}$ for every $0<a<3/2.$
\end{itemize}
\end{lemma}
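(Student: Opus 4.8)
The plan is to derive both tail estimates by the standard exponential-moment (Bernstein--Chernoff) method, treating the upper and lower tails separately. Throughout I would write $X=\sum_{i=1}^{n}X_i$ as a sum of independent $\mathrm{Bernoulli}(p)$ random variables, so that moment generating functions factorise over the summands.

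\textbf{Upper tail.} Fix $a$ with $0<a<3/2$ and a parameter $t>0$ to be optimised later. Since $z\mapsto e^{tz}$ is increasing, Markov's inequality gives
\[
\Pr\left[X\ge(1+a)\mu\right]=\Pr\left[e^{tX}\ge e^{t(1+a)\mu}\right]\le e^{-t(1+a)\mu}\,\mathbb{E}\!\left[e^{tX}\right].
\]
By independence, $\mathbb{E}\!\left[e^{tX}\right]=\prod_{i=1}^{n}\mathbb{E}\!\left[e^{tX_i}\right]=\left(1-p+pe^{t}\right)^{n}=\left(1+p(e^{t}-1)\right)^{n}$, and applying $1+z\le e^{z}$ (strict here, since $e^{t}-1>0$ and $p\in(0,1)$) with $z=p(e^{t}-1)$ yields $\mathbb{E}\!\left[e^{tX}\right]\le e^{np(e^{t}-1)}=e^{\mu(e^{t}-1)}$. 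Hence
\[
\Pr\left[X\ge(1+a)\mu\right]<\exp\!\left(\mu\left(e^{t}-1-t(1+a)\right)\right),
\]
and the choice $t=\ln(1+a)>0$, which minimises the exponent, turns this into $\Pr\left[X\ge(1+a)\mu\right]<\exp\!\left(-\mu\bigl((1+a)\ln(1+a)-a\bigr)\right)$. It then remains to verify the analytic inequality $(1+a)\ln(1+a)-a\ge a^{2}/3$ on $(0,3/2)$.

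\textbf{Lower tail.} The argument is symmetric. For $a>0$ it suffices to treat $0<a<1$, since for $a>1$ the event $\{X\le(1-a)\mu\}$ has probability $0$, and for $a=1$ it has probability $(1-p)^{n}\le e^{-\mu}<e^{-\mu/2}$. So fix $0<a<1$ and $t>0$. Markov applied to $e^{-tX}$ gives $\Pr\left[X\le(1-a)\mu\right]\le e^{t(1-a)\mu}\,\mathbb{E}\!\left[e^{-tX}\right]$, and as above $\mathbb{E}\!\left[e^{-tX}\right]=\left(1+p(e^{-t}-1)\right)^{n}\le e^{\mu(e^{-t}-1)}$. Optimising via $t=-\ln(1-a)>0$ produces $\Pr\left[X\le(1-a)\mu\right]<\exp\!\left(-\mu\bigl(a+(1-a)\ln(1-a)\bigr)\right)$, so it is enough to check $a+(1-a)\ln(1-a)\ge a^{2}/2$ for $0<a<1$.

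\textbf{The one delicate point.} The only step requiring genuine care is the pair of elementary inequalities above, and both follow by comparing derivatives. For $h(a)=a+(1-a)\ln(1-a)-a^{2}/2$ one has $h(0)=h'(0)=0$ and $h''(a)=a/(1-a)>0$ on $(0,1)$, so $h\ge0$ there. For $g(a)=(1+a)\ln(1+a)-a-a^{2}/3$ one has $g(0)=g'(0)=0$, while $g''(a)=1/(1+a)-2/3$ vanishes only at $a=1/2$; hence $g'$ is unimodal on $(0,3/2)$, and since $g(0)=0$ and a direct evaluation gives $g(3/2)>0$, the minimum of $g$ over $[0,3/2]$ is attained at an endpoint and equals $0$. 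Thus $g\ge0$ on the whole range (this is exactly where the ceiling $3/2$ is used), which together with the strictness already gained in the Markov and $1+z<e^{z}$ steps gives the claimed strict bounds. Alternatively one could invoke Hoeffding's inequality for sums of bounded independent variables to get a bound of the same shape, but with a weaker constant in the upper tail; the tailored computation above is what yields the precise exponents $a^{2}\mu/2$ and $a^{2}\mu/3$ stated in the lemma.
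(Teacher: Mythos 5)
Your proof is correct: the exponential-moment computation, the optimal choices $t=\ln(1+a)$ and $t=-\ln(1-a)$, and the two calculus verifications $(1+a)\ln(1+a)-a\ge a^2/3$ on $(0,3/2)$ and $a+(1-a)\ln(1-a)\ge a^2/2$ on $(0,1)$ all check out. The paper gives no proof of this lemma, citing it as a standard Chernoff bound from \cite{AlonSpencer} and \cite{JansonLuczakRucinski}, and your argument is precisely the standard one found there.
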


\noindent The following is a trivial yet useful bound.
\begin{lemma}\label{Che2}
Let $X \sim \emph{\Bin}(n,p)$ and $k \in \mathbb{N}$. Then the
following holds:
\[\Pr(X\geq k) \leq \left(\frac{enp}{k}\right)^k.\]
\end{lemma}
\begin{proof}
$\Pr(X \geq k) \leq \binom{n}{k}p^k \leq
\left(\frac{enp}{k}\right)^k$.
\end{proof}

\subsection{Properties of random graphs}

We start with the following natural definition for
$k$-pseudorandomness of graphs.
\begin{definition}
A graph $G$ is called \emph{$k$-pseudorandom} if
$e_{G}(A,B)>0$ for every two disjoint sets $A$ and $B$ of size at least $k$.
\end{definition}

\begin{lemma}\label{pseudorandom}
Let $k=k(n)$ be an integer such that $3\ln n\leq k\le \frac{n}{8}$ and let $\frac{3\ln \left(n/k\right)}{k}\leq p\leq 1$. Then whp a graph $G\sim \mathcal{G}(n,p)$ is $k$-pseudorandom.
\end{lemma}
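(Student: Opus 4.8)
The plan is a straightforward first-moment / union-bound argument. Fix two disjoint vertex sets $A, B$ with $|A| = |B| = k$ (it clearly suffices to check sets of size exactly $k$, since the property only gets easier for larger sets). For such a fixed pair, the probability that $e_G(A, B) = 0$ is exactly $(1-p)^{k^2} \le e^{-pk^2}$. I would then sum this bound over all choices of $A$ and $B$: the number of such pairs is at most $\binom{n}{k}^2 \le \left(\frac{en}{k}\right)^{2k}$, using the standard estimate $\binom{n}{k} \le (en/k)^k$. Hence the probability that $G$ fails to be $k$-pseudorandom is at most
\[
\left(\frac{en}{k}\right)^{2k} e^{-pk^2} = \exp\left(2k\ln\left(\frac{en}{k}\right) - pk^2\right) = \exp\left(k\left[2\ln\left(\frac{en}{k}\right) - pk\right]\right).
\]

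To finish, I want the bracketed quantity to be bounded away from $0$ by a quantity that, multiplied by $k \ge 3\ln n$, tends to $-\infty$. Using the hypothesis $p \ge \frac{3\ln(n/k)}{k}$, we get $pk \ge 3\ln(n/k)$, so
\[
2\ln\left(\frac{en}{k}\right) - pk \le 2\ln\left(\frac{n}{k}\right) + 2 - 3\ln\left(\frac{n}{k}\right) = 2 - \ln\left(\frac{n}{k}\right).
\]
Since $k \le n/8$, we have $\ln(n/k) \ge \ln 8 > 2$, so this is negative; in fact $\ln(n/k) \ge \ln 8$ gives $2 - \ln(n/k) \le 2 - \ln 8 =: -c$ for an absolute constant $c = \ln 8 - 2 > 0$. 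Therefore the failure probability is at most $\exp(-ck) \le \exp(-3c\ln n) = n^{-3c} \to 0$, which completes the proof.

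I do not expect any genuine obstacle here; the only mild subtlety is making sure the constants in the hypotheses ($3\ln n \le k$, $k \le n/8$, $p \ge 3\ln(n/k)/k$) are exactly what is needed to push the exponent to $-\omega(1)$. The factor $3$ in the lower bound on $p$ is what beats the $2\ln(n/k)$ coming from the two binomial coefficients (leaving a spare $-\ln(n/k)$), the bound $k \le n/8$ is what makes that spare term strictly negative (via $\ln 8 > 2$), and the bound $k \ge 3\ln n$ is what converts a bound of the form $e^{-\Omega(k)}$ into something that is $o(1)$ in $n$. If one wanted a cleaner statement one could also first reduce to $|A|,|B|$ exactly $k$ and then note monotonicity, but this is routine.
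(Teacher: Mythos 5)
Your proposal is correct and is essentially identical to the paper's proof: the same union bound over pairs of disjoint $k$-sets with the estimate $\binom{n}{k}^2(1-p)^{k^2}\le\left(\frac{en}{k}\right)^{2k}e^{-pk^2}$, with your final bound $\exp\left(k\left(2-\ln(n/k)\right)\right)$ being exactly the paper's $\left(\frac{e^2k}{n}\right)^k$ written in exponential form. The roles you assign to the three hypotheses are also exactly how the paper uses them.
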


\begin{proof}
If $G$ is not $k$-pseudorandom then there exist two disjoint
sets $S$ and $T$ with $|S|=|T|=k$ and no edge between them.
Note that the probability that there is no edge between a given such
pair $\{S,T\}$ is $(1-p)^{k^2}$ and there are at most
$\binom{n}{k}^2$ such pairs. Thus, applying the union bound over all pairs of disjoint sets $S,T$ of size $k$ we obtain that the probability that $G$ is not $k$-pseudorandom is at most
\begin{align*}
&\binom{n}{k}^2(1-p)^{k^2}\leq \left(\frac{en}{k}\right)^{2k}e^{-pk^2}=\left(\frac{e^2n^2e^{-pk}}{k^2} \right)^k\leq  \left(\frac{e^2k}{n}\right)^k=o(1).
\end{align*}
Thus, we conclude that whp a graph $G\sim \mathcal{G}(n,p)$ is $k$-pseudorandom as
claimed.
\end{proof}

In the following two lemmas we state a few properties of a typical random
graph which will be used extensively throughout the paper.

\begin{lemma}\label{lemma:propertiesofGnp}
Let $p=p(n)\in (0,1)$, let $c>1$ be a constant and let $C=C(n)\ge
6\ln (np\ln n)$. Then whp a graph $G\sim \mathcal G(n,p)$ is such
that the following holds:
\begin{enumerate}[$(P1)$]
\item $\Delta(G)\leq 4np$, provided $p\ge\frac{\ln n}{n}$.
\item $e_G(X)< c|X|$ for any subset $X\subseteq
V(G)$ of size at most $\left(\frac{1}{\ln n}\cdot\frac{(2c)^c}{e^{c+1}np^c}\right)^{\frac{1}{c-1}}$.
\item $e_G(X)< C|X|$ for any subset $X\subseteq
V(G)$ of size at most $\frac{C}{2p}$.
\end{enumerate}
\end{lemma}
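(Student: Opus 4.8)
The plan is to establish each of the three statements separately by a union-bound (first-moment) argument and then conclude by taking a union bound over the resulting (at most three) failure events. The only real choice is which large-deviation estimate to invoke: in all three parts the relevant random variable will be required to exceed its mean by at least a constant multiplicative factor (indeed by a factor tending to infinity in $(P1)$ and for small sets in $(P2)$--$(P3)$), which lies at or beyond the boundary of the range $0<a<3/2$ in which the upper tail in Lemma~\ref{Che} is stated; instead the cruder estimate of Lemma~\ref{Che2} is exactly the tool that is needed.

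For $(P1)$, fix a vertex $v$. Its degree $d_G(v)$ is distributed as $\mathrm{Bin}(n-1,p)$, so Lemma~\ref{Che2} applied with $k=\lceil 4np\rceil$ gives
\[
\Pr\big[d_G(v)\ge 4np\big]\le\Big(\frac{e(n-1)p}{4np}\Big)^{4np}\le\Big(\frac{e}{4}\Big)^{4np}\le\Big(\frac{e}{4}\Big)^{4\ln n}=n^{-4\ln(4/e)}<n^{-3/2},
\]
using $p\ge\ln n/n$ and $e/4<1$. A union bound over the $n$ vertices shows that $\Delta(G)\le 4np$ fails with probability $o(1)$.

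For $(P2)$, fix a subset $X$ with $|X|=s$. Then $e_G(X)\sim\mathrm{Bin}\big(\binom{s}{2},p\big)$, and since $e_G(X)$ is integer-valued, Lemma~\ref{Che2} gives $\Pr[e_G(X)\ge cs]\le\big(e\binom{s}{2}p/(cs)\big)^{cs}\le(esp/(2c))^{cs}$, where we use that $esp/(2c)\le1$ throughout the range $1\le s\le s_0$ (a short computation from the definition of $s_0$ below, valid once $np\ln n$ exceeds a constant depending on $c$). As there are at most $\binom{n}{s}\le(en/s)^s$ choices of $X$, the probability that some $X$ of size $s$ violates $(P2)$ is at most $f(s)^s$, where $f(s):=\tfrac{en}{s}\big(\tfrac{esp}{2c}\big)^c=\tfrac{e^{c+1}np^c}{(2c)^c}\,s^{c-1}$. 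Because $c>1$, the function $f$ is increasing, and the quantity $s_0:=\big(\tfrac{1}{\ln n}\cdot\tfrac{(2c)^c}{e^{c+1}np^c}\big)^{1/(c-1)}$ appearing in the statement is precisely the value satisfying $f(s_0)=1/\ln n$; hence $f(s)\le 1/\ln n$ for every $2\le s\le s_0$ (the case $s=1$ being trivial since then $e_G(X)=0<c$). Summing,
\[
\Pr\big[(P2)\text{ fails}\big]\le\sum_{s\ge2}(\ln n)^{-s}=O\big((\ln n)^{-2}\big)=o(1).
\]

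For $(P3)$ we repeat the computation with $c$ replaced by the possibly growing parameter $C$. For $|X|=s$ with $s\le C/(2p)=:s_{\max}$ we have $sp\le C/2$, hence $esp/(2C)\le e/4<1$, and as before $\Pr[\text{some }X\text{ of size }s\text{ violates }(P3)]\le g(s)^s$ with $g(s):=\tfrac{en}{s}\big(\tfrac{esp}{2C}\big)^C=\tfrac{e^{C+1}np^C}{(2C)^C}s^{C-1}$, a non-decreasing function of $s$ (for $C\ge1$). Thus $g(s)\le g(s_{\max})=\tfrac{2enp}{C}(e/4)^C$, and this is the only point where the constant $6$ is used: since $6\ln(4/e)>2$, the hypothesis $C\ge 6\ln(np\ln n)$ gives $(e/4)^C\le(np\ln n)^{-2}$, whence $g(s_{\max})\le\tfrac{2enp}{C}(np\ln n)^{-2}=\tfrac{2e}{C\,np\,(\ln n)^2}=o(1)$. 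Therefore $(P3)$ fails with probability at most $\sum_{s\ge2}g(s_{\max})^s=O\big(g(s_{\max})^2\big)=o(1)$, and a final union bound over $(P1),(P2),(P3)$ finishes the proof. None of the steps is really an obstacle: these are routine first-moment estimates, and the only things requiring a little care are the choice of Lemma~\ref{Che2} over Lemma~\ref{Che} and the arithmetic verification that $6\ln(4/e)>2$, which is exactly what makes the factor $(e/4)^C$ dominate the stray $np$ in $g(s_{\max})$.
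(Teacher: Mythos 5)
Your proposal is correct and follows essentially the same route as the paper: all three parts are handled by a union bound combined with the crude binomial tail estimate of Lemma~\ref{Che2}, with the same parameter choices and the same role for the constant $6$ via $(e/4)^6<e^{-2}$. The only cosmetic difference is in $(P3)$, where the paper splits the range of $|X|$ into two pieces and invokes $(P2)$ with $c=2$ for the small sets, whereas you treat the whole range at once by noting that $\frac{en}{s}\left(\frac{esp}{2C}\right)^C$ is increasing in $s$ and evaluating it at $s=C/(2p)$ — a slight streamlining that yields the same conclusion.
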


\begin{proof} For $\text{(P1)}$, note that for a vertex $v\in V(G)$ we have $d_{G}(v)\sim \Bin(n-1,p)$ and so by Lemma \ref{Che2}
\[\Pr\left[d_G(v)\geq 4np\right]\leq \left(\frac{e(n-1)p}{4np}\right)^{4np}\leq \left(\frac{e}{4
}\right)^{4\ln n}=o\left(\frac 1n\right).\]
Thus, by applying the union bound over all vertices of $G$ we see that the probability that there exists a vertex $v\in V(G)$ with $d_G(v)\ge 4np$ is $o(1)$, settling $\text{(P1)}$.

For $\text{(P2)}$, note that for a fixed $X\subseteq V(G)$ of size $|X|=x$ one has $e_G(X)\sim \Bin\left(\binom{x}{2},p\right)$. Therefore, by Lemma \ref{Che2} we obtain
\[\Pr\left[e_G(X)\ge cx\right]\le \left(\frac{ex^2p}{2cx}\right)^{cx}=\left(\frac{exp}{2c}\right)^{cx}.\]
Applying the union bound over all subsets of $V(G)$ of size at most
$t=\left(\frac{1}{\ln n}\cdot\frac{(2c)^c}{e^{c+1}np^c}\right)^{\frac{1}{c-1}}$ we see that the probability
that there exists a set $X$ of size $x\le t$ with $e_G(X)\ge cx$
is upper bounded by
\begin{align*}
&\;\sum_{x=1}^{t}\binom{n}{x}\left(\frac{exp}{2c}\right)^{cx} \le \sum_{x=1}^{t}\left(\frac{en}{x}\right)^x\left(\frac{exp}{2c}\right)^{cx}\\
= &\;
\sum_{x=1}^{t}\left[\frac{en}{x}\cdot\left(\frac{exp}{2c}\right)^c\right]^{x}\le
\sum_{x=1}^{t}\left(\frac{e^{c+1}np^ct^{c-1}}{(2c)^c}\right)^{x}=o(1)\,,
\end{align*}
since $\frac{e^{c+1}np^ct^{c-1}}{(2c)^c}=\frac{1}{\ln n}$. This settles $\text{(P2)}$.

For $\text{(P3)}$ we proceed in a similar way as with $\text{(P2)}$. By $\text{(P2)}$, taking $c=2$, we know that whp all sets $X\subseteq V(G)$ of size at most $\frac{1}{2np^2\ln n}$ satisfy $e_G(X)< 2|X|\le C|X|$. Thus, we just need to show that the probability that there exists a set $X$ of size $\frac{1}{2np^2\ln n}\le x\le \frac{C}{2p}$ with $e_G(X)\ge Cx$ is $o(1)$. Indeed, proceeding as above, we see that this probability is at most
\begin{align*}
&\sum_{x=\frac{1}{2np^2\ln n}}^{
\frac{C}{2p}}\left[\frac{en}{x}\cdot\left(\frac{exp}{2C}\right)^C\right]^x
\le \sum_{x=1}^{
\infty}\left[2e(np)^2 \ln
n\left(\frac{e}{4}\right)^C\right]^x \le \sum_{x=1}^{
\infty}\left(\frac{2e}{\ln n}\right)^x = o(1).
\end{align*}
since $C\ge 6\ln (np\ln n)$ and $\left(\frac{e}{4}\right)^{6}<e^{-2}$. This settles $\text{(P3)}$.
\end{proof}

\begin{lemma}
\label{lemma: minimum degree 2 and no two small cycles are close}
Let $w:=w(n)$ be such that $w\rightarrow \infty$ as $n\rightarrow \infty$, let $p:=p(n)$ be such that $\frac{\ln n+\ln \ln n+w}{n}\le p\le \frac{10\ln n}{n}$, let $G\sim\mathcal{G}(n,p)$ and let $C$ be a fixed positive integer. Then whp all of the following hold:
\begin{enumerate}[$(P1)$]
\item $G$ has minimum degree at least $2$.
\item there are no two cycles in $G$ of length at most $\frac{\ln n}{4\ln (np)}$ sharing exactly one vertex.
\item between any two vertices $u,v\in V(G)$ there are at most $3$ trails of length at most $C$.
\end{enumerate}
\end{lemma}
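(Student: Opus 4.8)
My plan is to prove all three parts by first–moment (union bound) estimates; the only point that requires genuine thought is a short deterministic reduction for $(P3)$.

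For $(P1)$, since $w\to\infty$ it is enough to show that the expected number of vertices of degree at most $1$ tends to $0$. For a fixed vertex $v$, using $d_G(v)\sim\mathrm{Bin}(n-1,p)$ together with $p\le1$ and $np\ge\ln n$, one gets $\Pr[d_G(v)\le1]=(1-p)^{n-1}+(n-1)p(1-p)^{n-2}=O(np\,e^{-np})$. As $x\mapsto xe^{-x}$ is decreasing for $x\ge1$ and $np\ge\ln n+\ln\ln n+w$ (while $np\le10\ln n$ forces $w\le9\ln n$), this yields $np\,e^{-np}\le\frac{\ln n+\ln\ln n+w}{n\ln n\,e^{w}}=O(n^{-1}e^{-w})$, so the expected number of such vertices is $O(e^{-w})=o(1)$, and Markov's inequality gives $(P1)$.

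For $(P2)$, I would put $L=\frac{\ln n}{4\ln(np)}$, which tends to infinity (so the statement is not vacuous), and note that two cycles of lengths $a$ and $b$ sharing exactly one vertex form a connected subgraph on $a+b-1$ vertices with $a+b$ edges; for fixed $a,b$ there are at most $n^{a+b-1}$ such subgraphs of $K_n$ (choose the common vertex, then the cyclic orders of the two cycles). Hence the expected number of such configurations with $3\le a,b\le L$ is at most
\[
\sum_{a=3}^{L}\sum_{b=3}^{L}n^{a+b-1}p^{a+b}=\frac1n\sum_{a=3}^{L}\sum_{b=3}^{L}(np)^{a+b}\le\frac{L^{2}}{n}(np)^{2L}=\frac{L^{2}}{\sqrt n}=o(1),
\]
because $(np)^{2L}=\exp(2L\ln(np))=\exp(\tfrac12\ln n)=\sqrt n$ and $L\le\ln n$; Markov's inequality finishes $(P2)$.

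For $(P3)$, I would first establish the deterministic claim: if two distinct vertices $u,v$ admit four pairwise distinct trails of length at most $C$, then the union $H$ of these four trails is a connected subgraph with $u,v\in V(H)$, $e(H)\le4C$ and $e(H)\ge v(H)+1$. Here $H$ is connected because every trail passes through $u$; if $H$ were a forest there would be only one $u$–$v$ trail, while if $H$ were unicyclic then, deleting an edge $e_0$ of its unique cycle to obtain a spanning tree $T$, any $u$–$v$ trail either avoids $e_0$ (and then equals the unique $u$–$v$ path of $T$) or uses $e_0$ exactly once (and is then determined by which endpoint of $e_0$ it reaches first, since its two remaining pieces are trails, hence paths, in $T$), for a total of at most $3$ trails — contradicting that there are four. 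Granting the claim, the probability that some pair $u\ne v$ has four such trails is bounded, via a union bound over the $\binom n2$ pairs and over all admissible graphs $H$, by
\[
\binom n2\sum_{k=4}^{4C}\sum_{l=k+1}^{4C}n^{k-2}(k^{2})^{l}p^{l},
\]
where $k=v(H)$ and $l=e(H)$ (there are at most $n^{k-2}(k^{2})^{l}$ labelled such graphs, each contained in $G$ with probability $p^{l}$). Since $l\ge k+1$ and $p\le10\ln n/n$, we have $n^{k-2}p^{l}=(np)^{k-2}p^{\,l-k+2}\le(10\ln n)^{k-2}(10\ln n/n)^{3}$, and as the number of pairs $(k,l)$ is $O_C(1)$ the whole expression is of order $(\ln n)^{4C+1}/n=o(1)$; hence whp no such pair exists, which is $(P3)$.

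The main obstacle is the deterministic claim used for $(P3)$: one has to check with some care that a forest or a unicyclic graph supports at most three trails between any two of its vertices. This is exactly what upgrades $e(H)\ge v(H)$ to the strict $e(H)\ge v(H)+1$, i.e.\ forces $H$ to have cyclomatic number at least $2$, and that extra edge is precisely what makes the per-pair probability small enough to absorb the factor $\binom n2$ in the union bound over pairs (with only $e(H)\ge v(H)$ the bound would be a power of $\ln n$ rather than $o(1)$).
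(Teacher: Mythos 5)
Your proof is correct and follows essentially the same route as the paper's: a first-moment/union-bound computation for each of the three properties, with the key point for $(P3)$ being that the union of four distinct short trails between $u$ and $v$ must satisfy $e(H)\ge v(H)+1$. Your careful deterministic verification of that last point (at most one $u$--$v$ trail in a forest, at most three in a unicyclic graph) is exactly the ``moment's thought'' the paper leaves implicit, and your counting and probability estimates all check out.
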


\begin{proof}
First we prove that whp $(P1)$ holds. Note that for a fixed vertex $v\in V(G)$ we have
\[\Pr\left[d_{G}(v)<2\right]=(1-p)^{n-1}+(n-1)p(1-p)^{n-2}\le 2e^{-np}+2npe^{-np}\le \frac{2e^{-w}}{n\ln n}+20\ln n\cdot\frac{e^{-w}}{n\ln n}=o\left(\frac{1}{n}\right)\]
where in the second inequality we used the fact that $\ln n+\ln \ln n+w\le np\le 10\ln n$, and in the last step we used the fact that $w\rightarrow \infty$. Thus, taking the union bound over all vertices in $V(G)$ we obtain
\[\Pr\left[\delta(G)<2\right]=\Pr\left[\exists v\in V(G)\text{ such that }d_G(v)<2\right]\le n\cdot o\left(\frac{1}{n}\right)=o(1)\,,\]
implying that $(P1)$ holds whp as claimed.

Next we show that $(P2)$ also holds whp. Note that if $\mathcal C_1$ and $\mathcal C_2$ are cycles in $K_n$ of lengths $l_1,l_2$, respectively, sharing exactly one vertex then $|V(\mathcal C_1)\cup V(\mathcal C_2)|=l_1+l_2-1$ and $|E(\mathcal C_1)\cup E(\mathcal C_2)|=l_1+l_2$. Thus, using the union bound, we see that the probability that in $G$ we have two cycles of lengths $l_1$ and $l_2$ which share exactly one vertex is at most $n^{l_1+l_2-1}p^{l_1+l_2}=\frac{(np)^{l_1+l_2}}{n}$. Moreover, letting $k:=\frac{\ln n}{4\ln (np)}$ and taking the union bound over all pairs of cycles of lengths at most $k$ which share exactly one vertex, we see that the probability of $(P2)$ not holding is at most
\[\sum_{l_1,l_2=1}^{k}\frac{(np)^{l_1+l_2}}{n}\le \frac{k^2(np)^{2k}}{n}=o(1)\,,\]
since $(np)^{2k}=\sqrt{n}$. Thus, we conclude that $(P2)$ holds whp as claimed.

Finally we show that whp $(P3)$ holds. Let $u,v\in V(G)$ and let
$\mathcal{W}_1,\mathcal W_2,\mathcal W_3,\mathcal W_4$ be any four
distinct trails between $u$ and $v$ in $K_n$, each of length at most
$C$. A moment's thought reveals that
\[|V(\mathcal W_1)\cup V(\mathcal W_2)\cup V(\mathcal W_3)\cup V(\mathcal W_4)|< |E(\mathcal W_1)\cup E(\mathcal W_2)\cup E(\mathcal W_3)\cup E(\mathcal W_4)|\le 4C.\]
Thus, using the union bound we see that the probability of $(P3)$ not holding is at most
\[\sum_{l=1}^{4C}n^{l-1}p^{l}=\sum_{l=1}^{4C}\frac{(np)^{l}}{n}\le \frac{4C(10\ln n)^{4C}}{n}=o(1).\]
We conclude that $(P3)$ holds whp, completing the proof of the lemma.
\end{proof}

\subsection{Properties of graphs}
The following simple lemma can be found, e.g., in Chapter 1 of \cite{Diestel}.
\begin{lemma}
\label{lemma:subgraph of large minimum degree} Let $G=(V,E)$ be a graph. There exists $S\subseteq V$ such that $G[S]$ is a connected graph of minimum degree at least $|E|/|V|$.
\end{lemma}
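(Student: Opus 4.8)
The statement to prove is Lemma~\ref{lemma:subgraph of large minimum degree}: every graph $G=(V,E)$ contains a subset $S\subseteq V$ such that $G[S]$ is connected with minimum degree at least $|E|/|V|$.

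\medskip

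\noindent\textbf{Proof proposal.} The plan is to use a standard greedy peeling argument, with a twist to get connectedness. Set $d := |E|/|V|$, the average degree halved (more precisely, the edge-to-vertex ratio). First I would iteratively delete vertices of low degree: as long as the current graph $H$ has a vertex $v$ with $d_H(v) \le d$, remove $v$. I claim this process cannot delete all vertices. Indeed, each deletion removes at most $d$ edges, so after deleting all $|V|$ vertices we would have removed at most $d|V| = |E|$ edges; but we must remove all $|E|$ edges and also we remove vertices, so a careful count (removing $k$ vertices removes at most $kd$ edges but if we reach the empty graph we have removed exactly $|E|$ edges while $|E| = d|V| > d(|V|-1) \ge$ total edges removable if the process ran out before the last vertex) shows the process must halt at a nonempty graph $H'$ with $\delta(H') > d$, i.e. $\delta(H') \ge d$ since degrees... actually one should be slightly careful: if $d$ is not an integer the bound $\delta(H')\ge d$ is what we want and it is what we get, since the process only deletes vertices of degree $\le d$, so the surviving graph has all degrees $> d$, hence $\ge d$; if we want $\ge d$ exactly including equality, note the surviving graph has $\delta \ge d$ either way. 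So $H'$ is nonempty with minimum degree at least $d = |E|/|V|$.

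\medskip

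\noindent Now $H'$ need not be connected, but any graph with minimum degree at least $d$ has every connected component also of minimum degree at least $d$ (minimum degree is inherited by components, since a component is a union of whole neighborhoods). So I would simply take $S$ to be the vertex set of any connected component of $H'$; then $G[S] = H'[S]$ is connected and has $\delta(G[S]) \ge \delta(H') \ge |E|/|V|$, as required.

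\medskip

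\noindent The only subtle point — and the step I would state most carefully — is the non-vacuity of the peeling process: verifying that it cannot exhaust all vertices. The clean way is a potential/counting argument. Order the deleted vertices $v_1, v_2, \dots$ in deletion order; when $v_i$ is deleted it has degree at most $d$ in the remaining graph, so it is incident to at most $d$ not-yet-deleted edges. If the process deleted every vertex, then summing over $i = 1,\dots,|V|$ we would count every edge exactly once (each edge is destroyed when the first of its two endpoints is deleted), giving $|E| \le d|V|$; but we also would have, for the last vertex $v_{|V|}$, degree $0$, so actually $|E| \le d(|V|-1) < d|V| = |E|$ (using $d>0$; if $|E|=0$ the lemma is trivial with $S$ a single vertex), a contradiction. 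Hence the process stops while some vertices remain, and those form the desired $H'$. This is essentially Proposition~1.2.2 in Diestel, which the excerpt already cites, so I would keep the write-up brief.
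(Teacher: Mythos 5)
Your proof is correct and is exactly the standard peeling argument from Diestel (Proposition 1.2.2) that the paper itself cites in lieu of a proof; the counting that rules out exhausting all vertices and the passage to a connected component are both handled properly. No issues.
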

The next lemma follows from a simple application of Hall's Theorem (see e.g. the exercises of Chapter 2 in \cite{Diestel}).
\begin{lemma}
\label{lemma: star matching}
Let $G=(V,E)$ be a bipartite graph with bipartition $V=A\cup B$, and let $k$ be a natural number. Suppose that for every $I\subseteq A$ we have $|N(I)|\ge k|I|$. Then for every $i\in A$ there exists a subset $J_i\subseteq N(i)$ of size $|J_i|=k$ such that all the sets $(J_i)_{i\in A}$ are disjoint.
\end{lemma}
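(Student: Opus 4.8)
The plan is to reduce the statement to the classical Hall marriage theorem by passing to an auxiliary bipartite graph in which each vertex of $A$ is ``split'' into $k$ identical copies. Concretely, I would set $A^* := A\times[k]$ and form the bipartite graph $H$ with parts $A^*$ and $B$, joining a copy $(i,j)$ of $i\in A$ to precisely the vertices of $N_G(i)\subseteq B$ (here $N(\cdot)$ in the hypothesis is read as $N_G(\cdot)$). A matching of $H$ saturating $A^*$ would then assign to each $i\in A$ the set $J_i$ consisting of the partners of $(i,1),\dots,(i,k)$; this is a $k$-element subset of $N_G(i)$, and since the matching edges are vertex-disjoint, copies coming from different vertices of $A$ are matched to different vertices of $B$, so the sets $(J_i)_{i\in A}$ are automatically pairwise disjoint. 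Thus it suffices to verify Hall's condition for the side $A^*$ in $H$.

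To check Hall's condition, take any $S\subseteq A^*$ and let $I:=\{\,i\in A:\ (i,j)\in S\text{ for some }j\in[k]\,\}$ be the set of vertices of $A$ having at least one copy in $S$. On the one hand, each $i\in A$ contributes at most $k$ copies, so $|S|\le k|I|$. On the other hand, $N_H(S)=\bigcup_{i\in I}N_G(i)=N_G(I)$, and hence the hypothesis yields $|N_H(S)|=|N_G(I)|\ge k|I|\ge|S|$. Therefore Hall's condition holds, $H$ has a matching saturating $A^*$, and the construction above produces the required family $(J_i)_{i\in A}$.

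I do not expect a genuine obstacle here; the only point needing a little care is the bookkeeping in the two inequalities — using ``at most $k$ copies per vertex'' for the bound $|S|\le k|I|$ and the hypothesis $|N_G(I)|\ge k|I|$ for the matching lower bound — together with the observation that a matching, being a set of vertex-disjoint edges, forces the sets $J_i$ to be disjoint even across distinct $i\in A$. Everything else is a direct invocation of Hall's theorem, which is why the lemma is quoted as essentially an exercise.
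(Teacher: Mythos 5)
Your proof is correct and is exactly the argument the paper has in mind: the paper states the lemma without proof, citing it as ``a simple application of Hall's Theorem,'' and the standard vertex-splitting reduction you carry out (replacing each $i\in A$ by $k$ copies and verifying Hall's condition via $|S|\le k|I|$ and $|N_H(S)|=|N_G(I)|\ge k|I|$) is that application. No gaps.
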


A routine way to turn a non-Hamiltonian graph $H$ that satisfies
some expansion properties into a
Hamiltonian graph is by using {boosters}. A \emph{booster} is a
non-edge $e$ of $H$ such that the addition of $e$ to $H$ creates a path which is longer than a longest path of $H$, or
 turns $H$ into a Hamiltonian graph. In order to turn $H$ into a
Hamiltonian graph, we start by adding a booster $e$ of $H$. If the
new graph $H\cup \{e\}$ is not Hamiltonian then one can continue by
adding a booster of the new graph. Note that after at most $|V(H)|$
successive steps the process must terminate and we end up with a
Hamiltonian graph. The main point using this method is that it is
well-known (for example, see \cite{bollobas1998random}) that a
non-Hamiltonian connected graph $H$ with ``good'' expansion
properties has many boosters. In the proof of Theorem
\ref{thm:main1} we use a similar notion of boosters, known as
\emph{$e$-boosters}.

Given a graph $H$ and a pair $e\in \binom{V(H)}{2}$, consider a path $P$ of $H\cup \{e\}$ of maximal length which contains $e$ as
an edge. A non-edge $f$ of $H$ is called an $e$-booster if
$H\cup\{e,f\}$ contains a path $Q$ which passes through $e$ and
which is longer than $P$ or if $H\cup\{e,f\}$ contains a
Hamilton cycle that uses $e$. The following lemma appears in
\cite{FKN} and shows that every connected and non-Hamiltonian graph
$G$ satisfying certain expansion properties has many $e$-boosters for every possible
$e$.
\begin{lemma} \label{lem:boosters}
Let $H$ be a connected graph for which $|N_{H}(X)\setminus X|\ge
2|X|+2$ holds for every subset $X\subseteq V(H)$ of size $|X|\le k$.
Then, for every pair $e\in \binom{V(H)}{2}$ such that $H\cup\{e\}$
does not contain a Hamilton cycle which uses the edge $e$, the
number of $e$-boosters for $H$ is at least $\frac{1}{2}(k+1)^2$.
\end{lemma}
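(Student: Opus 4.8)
The plan is to run P\'osa's rotation--extension method while keeping the distinguished pair $e$ in the current path at every step. Fix $e\in\binom{V(H)}{2}$ with $H\cup\{e\}$ containing no Hamilton cycle through $e$, and let $P$ be a \emph{longest} path of $H\cup\{e\}$ using the edge $e$ (such a path exists, $e$ being one). Throughout I would use the following dichotomy: if $C$ is a cycle through $e$ with $V(C)\subseteq V(H)$ in a graph $F\supseteq H\cup\{e\}$, then either $V(C)=V(H)$, so $C$ is a Hamilton cycle through $e$, or $V(C)\subsetneq V(H)$, and then picking (by connectivity of $H$) an edge $zz'$ of $H$ with $z\in V(C)$ and $z'\notin V(C)$, deleting from $C$ an edge at $z$ other than $e$, and prepending $z'z$ produces a path of $F$ through $e$ on $|V(C)|+1$ vertices. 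Since the hypothesis applied to singletons gives $\delta(H)\ge 4$, we have $|V(P)|\ge 3$; and $e$ cannot join the two endpoints of $P$, since the dichotomy applied to $F=H\cup\{e\}$ and $C=P\cup\{e\}$ would contradict either the choice of $e$ or the maximality of $P$. Hence I may fix an endpoint $v$ of $P$ not incident with $e$.

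First I would fix this $v$ and perform at the free end of the current path only rotations that keep $v$ fixed and do not delete $e$; by the dichotomy, no reachable path can have its free endpoint $H$-adjacent to $v$ (closing it gives a cycle through $e$ on $V(P)$). Let $S$ be the set of vertices occurring as the free endpoint of some path obtained from $P$ by such $e$-preserving rotations; every such path has vertex set $V(P)$ and contains $e$. The technical heart is a P\'osa-type inequality $|N_H(S)\setminus S|<2|S|+2$. The classical P\'osa analysis gives $|N_H(S)\setminus S|<2|S|$ for the unrestricted reachable set, and freezing $e$ forbids, on each reachable path, a single pivot (the one deleting $e$), inflating the bound by at most $2$ (coming from the two endpoints of $e$). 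Checking that this loss is really at most $2$, so that the hypothesis $|N_H(X)\setminus X|\ge 2|X|+2$ for $|X|\le k$ may be applied to $X=S$, is the step I expect to require the most care; it is in essence the corresponding lemma of \cite{FKN}. The conclusion is $|S|\ge k+1$.

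Next, for each $w\in S$ I would fix a path $P_w$ from $v$ to $w$ with $V(P_w)=V(P)$ and $e\in E(P_w)$ (so $P_w$ is again a longest path of $H\cup\{e\}$ through $e$), and repeat the previous paragraph with $w$ as the fixed endpoint and $P_w$ as the starting path, obtaining $S_w$ with $|S_w|\ge k+1$. For $w\in S$ and $w'\in S_w$ there is a path $Q$ from $w$ to $w'$ with $V(Q)=V(P)$ and $e\in E(Q)$, so $Q\cup\{ww'\}$ is a cycle through $e$ on $V(P)$. Now $ww'\ne e$ (else the cycle degenerates and $|V(P)|=2$), and $ww'\notin E(H)$ (else $Q\cup\{ww'\}\subseteq H\cup\{e\}$ and the dichotomy contradicts the choice of $e$ or the maximality of $P$); moreover the dichotomy applied to $F=H\cup\{e,ww'\}$ shows that adding $ww'$ either completes a Hamilton cycle through $e$ or yields a path through $e$ longer than $P$. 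Thus $\{w,w'\}$ is an $e$-booster.

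Finally I would count: the number of ordered pairs $(w,w')$ with $w\in S$ and $w'\in S_w$ is $\sum_{w\in S}|S_w|\ge|S|(k+1)\ge(k+1)^2$, and each unordered pair $\{w,w'\}$ arises from at most two such ordered pairs, so $H$ has at least $\tfrac12(k+1)^2$ distinct $e$-boosters. The only genuinely delicate point is the frozen-edge P\'osa bound $|N_H(S)\setminus S|<2|S|+2$; the rest is bookkeeping around the one dichotomy above --- a cycle through $e$ is either a forbidden Hamilton cycle of $H$ or can be opened and extended to a strictly longer path through $e$.
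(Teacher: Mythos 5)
Your proposal is correct and is essentially the argument of \cite{FKN}; the paper does not prove Lemma \ref{lem:boosters} itself but only cites that reference for it. The one step you flag, the frozen-edge P\'osa bound $|N_H(S)\setminus S|<2|S|+2$, goes through exactly as you anticipate: any path-edge deleted during a sequence of rotations has an endpoint that becomes a new free endpoint and hence lies in $S$, so a vertex $u\notin S$ with both of its neighbours on the original path outside $S$ retains those two path-edges on every reachable path; if such a $u$ had an $H$-neighbour in $S$, the rotation pivoting at $u$ would be legal and would place a path-neighbour of $u$ into $S$ --- unless the edge that rotation deletes is $e$, which forces $u$ to be an endpoint of $e$. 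Hence $N_H(S)\setminus S$ is contained in the union of the at most $2|S|-1$ path-neighbours of $S$ and the two endpoints of $e$, giving $|N_H(S)\setminus S|\le 2|S|+1$, and the rest of your bookkeeping (the dichotomy, the double rotation, and the count of $\tfrac12(k+1)^2$ unordered pairs) is sound.
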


\section{Proof of Theorem \ref{thm:main1}}

In this section we prove Theorem \ref{thm:main1}. In our proof we
present a randomised algorithm which successively queries (about adjacency) carefully
selected pairs of vertices in $\mathcal G(n,p)$, where $p\ge \frac{\ln n+\ln\ln n+w(1)}{n}$. We then show that whp the algorithm terminates by revealing a Hamilton cycle after only $n+o(n)$ positive answers. The algorithm is divided
into five different phases, labeled I, II, III, IV and V.

We remark that we may and will assume throughout the proof that $p\le \frac{10\ln n}{n}$. Indeed, if $p>\frac{10\ln n}{n}$ then we can use the algorithm Alg($p'$) which queries pairs of vertices with probability $p'=\frac{10\ln n}{n}$ with a slight modification to obtain an algorithm Alg($p$) which queries pairs of vertices with probability $p$. When a pair of vertices is queried by Alg($p'$), do it in two stages: first query it with probability $\frac{p'}{p}$ to decide whether Alg($p$) should query this pair of vertices as well; if the answer is positive, then query it a second time with probability $p$. A pair of vertices which is queried by Alg($p'$) is considered to be an edge if and only if the answer to both questions is positive, and so this happens with probability $p'=\frac{p'}{p}\cdot p$. However, in the algorithm Alg($p$) pairs of vertices are queried about adjacency with probability $p$. Finally, note crucially that the edges which are revealed by the algorithm Alg($p$) are exactly the same as the ones which are revealed by the algorithm Alg($p'$) and so, if the latter whp finds a Hamilton cycle after only $n+o(n)$ positive answers then so does the former.

In order to simplify notation in the proof, we work in the following setting. Throughout the algorithm we maintain a tripartition $R\cup W\cup B$ of the edge set of the complete graph with vertex set $V=[n]$. Edges in $R$, $W$, $B$ are called respectively \textit{red}, \textit{white} and \textit{blue}. A red edge represents an edge which has been queried successfully (and thus belongs to the exposed graph $G$), a white edge represents an edge which has not yet been queried and a blue edge represents an edge which has been queried unsuccessfully. During the algorithm we \textit{recolour} some white edges. \textit{Recolouring} a white edge means that with probability $p$ we recolour it red (i.e., we move it from the set $W$ to the set $R$), and otherwise we recolour it blue (i.e. we move it from the set $W$ to the set $B$). All the recolourings are considered independent. At any point during the algorithm, the \textit{red graph} (respectively, \textit{white graph} and \textit{blue graph}) refers to the graph with vertex set $V$ and edge set $R$ (respectively, $W$ and $B$). Moreover, if $u,v\in V$ then we say that $v$ is a \textit{red neighbour} (respectively, \textit{white neighbour} and \textit{blue neighbour}) of $u$ if $uv\in R$ (respectively, $uv\in W$ and $uv\in B$). The algorithm starts with the tripartition
$(R,W,B)=\left(\emptyset,\binom{V}{2}, \emptyset\right)$ and whp it
ends with the red graph containing a Hamilton cycle while having
only $n+o(n)$ edges.

During the algorithm, if $R$ denotes the set of red edges at a certain point, and if at that point it is verified that any of the events below does not hold then we stop the algorithm:
\begin{enumerate}[N.1]
\item We have $\Delta(R)\le 40\ln n$.
\item If none of the edges incident to a given vertex $v\in V$ are white (i.e. they were already recoloured before) then $v$ has at least $2$ red neighbours.
\item There are no two cycles in $R$ of length at most $(\ln n)^{0.9}$ sharing exactly one vertex.
\item Between any two vertices $u,v\in V$ there are at most three trails of length at most $6$ in $R$.
\end{enumerate}
We remark that all of these events hold whp by $(P1)$ of Lemma \ref{lemma:propertiesofGnp} and by Lemma \ref{lemma: minimum degree 2 and no two small cycles are close}, and so we can assume throughout the proof that these properties always hold.

In Phases I--IV we consider a finite number of properties concerning the tripartition $(R,W,B)$, which we need for later phases. These properties will be labeled according to the phase in which they are considered, in order to make it easier for the reader to find them in a later reference. For example, II.1(b) will be used to denote property 1(b) of Phase II. In each phase, we show that all the properties considered hold whp and so we may assume that they hold for later phases.

\subsection{Outline of the algorithm}

{\bf Phase I:} In this phase we use a modified version of the well
known graph search algorithm Depth First Search (see e.g.
\cite{West}). Starting from a complete graph with all edges white,
we use this algorithm to find a ``long" red path $\mathcal P$ by
recolouring red at most $n-1$ white edges. Afterwards by recolouring
red one more white edge between ``short" initial and final segments
of the path $\mathcal P$, we create a red cycle $\mathcal C_1$ of
size $n-\Theta(n(\ln n)^{-0.45})$. This is done whilst ensuring some
technical conditions needed for later phases.

{\bf Phase II:} Let $\mathcal U:=V\setminus V(\mathcal{C}_1)$. In
this phase, starting from the partition $V=V(\mathcal{C}_1)\cup
\mathcal{U}$, we recolour a random subset of white edges in
$\mathcal U$ and partition $\mathcal{U}$ into three sets
$\text{EXP}_1$, $\text{SMALL}_1$ and $\text{TINY}$. The set
$\text{EXP}_1$ will be such that the minimum red degree inside it is
$\Omega\left((\ln n)^{0.4}\right)$ and $|\mathcal{U}\setminus
\text{EXP}_1|=o\left(n/\ln n\right)$. Later, we recolour all the
white edges between $\mathcal{U}\setminus \text{EXP}_1$ and
$V(\mathcal{C}_1)$. A partition $\mathcal U\setminus
\text{EXP}_1=\text{SMALL}_1\cup\text{TINY}$ is then obtained by
letting $\text{SMALL}_1$ be the set of all vertices of ``large" red
degree into a ``large" subset $\mathcal M_1$ of $V(\mathcal C_1)$.
Finally, we recolour all the white edges inside $\mathcal U$
touching vertices of $\text{TINY}$. All of this is achieved by
recolouring red only $o(n)$ edges during this phase.

{\bf Phase III:} The goal of this phase is to ``swallow" the
vertices of $\text{TINY}$ one at a time into the red cycle
$\mathcal{C}_1$ obtained in Phase I. This is achieved by creating at
each time a larger red cycle that contains a new vertex of
$\text{TINY}$ and some vertices of $\text{EXP}_1$, until a red cycle
$\mathcal{C}_2$ such that $V(\mathcal{C}_1)\cup \text{TINY}\subseteq
V(\mathcal{C}_2)$ is obtained. We ensure that whp only $o(n)$ edges
are recoloured red during this phase. At the end of this phase we
get a partition of the vertex set $V=V(\mathcal{C}_2)\cup
\text{EXP}_2\cup\text{SMALL}_2$ where $\text{EXP}_2\subseteq
\text{EXP}_1$ is a ``good expander" and $\text{SMALL}_2\subseteq
\text{SMALL}_1$ is a set of vertices of ``large" degree into a
``large" subset $\mathcal M_2\subseteq \mathcal M_1$ of $V(\mathcal
C_2)$.

{\bf Phase IV:} The goal of this phase is to ``swallow" the vertices of $\text{SMALL}_2$ one at a time into the red cycle $\mathcal{C}_2$ obtained in Phase III. This is achieved by creating at each time a larger red cycle that contains a new vertex of $\text{SMALL}_2$, until a red cycle $\mathcal{C}_3$ such that $V(\mathcal{C}_2)\cup \text{SMALL}_2\subseteq V(\mathcal{C}_3)$ is obtained. We ensure that whp only $o(n)$ edges are recoloured red during this phase. Moreover, at the end we get a partition of the vertex set $V=V(\mathcal{C}_3)\cup \text{EXP}_3$ where $\text{EXP}_3\subseteq \text{EXP}_2$ is a ``good expander".

{\bf Phase V:} In this phase we create a Hamilton cycle in the red
graph by merging the red cycle $\mathcal C_3$ with the set
$\text{EXP}_3$. We start by recolouring red $\Theta(1)$ white edges
in $\text{EXP}_3$ in order to make the red graph in $\text{EXP}_3$
become connected. Afterwards, we consider two adjacent vertices
$v,w$ in the red cycle $\mathcal C_3$ which have large white degree
onto $\text{EXP}_3$. By recolouring edges between $v,w$ and
$\text{EXP}_3$ we then find $x,y\in \text{EXP}_3$ such that $vx$ and
$wy$ are coloured red. Finally, using the fact that the red graph in
$\text{EXP}_3$ is a connected expander we recolour red at most
$|\text{EXP}_3|$ edges in order to find a red Hamilton path from $x$
to $y$ inside the set $\text{EXP}_3$. This path together with the
red path $\mathcal C_3\setminus\{vw\}$ and the red edges $vx$ and
$wy$ then provides the desired red Hamilton cycle in $V$. All of
this is achieved by recolouring red only $o(n)$ edges during this
phase.

\subsection{Phase I}
The algorithm for this phase is divided into two stages. In Stage 1
we use a randomised version of the Depth First Search exploration
algorithm to obtain a ``long" red path $\mathcal P$. In Stage 2 of
the algorithm we use the red path $\mathcal P$ to find a red cycle
$\mathcal{C}_1$ of size $n-\Theta(n(\ln n)^{-0.45})$, by recolouring
red exactly one white edge between an initial and a final interval
of $\mathcal P$.
\medskip

\textbf{(Stage 1)} In this stage we run a (slightly modified version of)
DFS algorithm on the vertex set $V=[n]$. Recall that DFS is an
algorithm to explore all the connected components of a graph
$G=(V,E)$, while finding a spanning tree of each of them in the
following way. It maintains a tripartition $(C,A,U)$ of the vertex
set $V$, letting $C$ be the set of vertices whose exploration is
complete, $U$ be the set of unvisited vertices and $A=V\setminus
(C\cup U)$ (the vertices which are ``active"), where the vertices of
$A$ are kept in a stack (last in first out). It starts with
$C=A=\emptyset$ and $U=V$ and runs until $A\cup U=\emptyset$. In
each round of the algorithm, if $A\neq \emptyset$, then it
identifies the last vertex $a\in A$, and starts to query $U$ for
neighbours of $a$, according to the natural ordering on them. If
such a neighbour exists, let $u\in U$ be the first such neighbour,
then the algorithm moves $u$ from $U$ to $A$. Otherwise, the
algorithm moves $a$ from $A$ to $C$. If $A=\emptyset$, then the
algorithm moves the first (according to the natural ordering) vertex
in $U$ to $A$.

The following properties of DFS will be relevant for us and follow
immediately from its description.

\begin{enumerate}[$(O1)$]

\item At any point during the algorithm, it is true that all the
pairs between $C$ and $U$ have been queried, and none of them are
edges of $G$.
\item Throughout the algorithm, the explored graph is a forest.
\item At each round of the algorithm exactly one vertex moves,
either from $U$ to $A$ or from $A$ to $C$.
\item The set $A$ always spans a path.
\end{enumerate}

For our purposes, we run DFS on a random input in the following way.
At each round of the algorithm let $e$ be the relevant pair waiting
to be queried. We first decide with probability $q=(\ln n)^{-1/2}$
whether we want to recolour this pair. If yes, we recolour the pair $e$ red
with probability $p$ (and consider it as an edge for DFS) and blue otherwise (and consider it as a non-edge for DFS). If no, we consider $e$ to be a non-edge for DFS. All these actions happen independently at
random. We stop this
algorithm as soon as $|C|=|U|$ (and not when $A\cup U=\emptyset$),
with the set $A$ spanning a red path $\mathcal P=a_1a_2\ldots
a_{|A|}$ of size $|A|=n-|C|-|U|$. Claim \ref{claim:StageIterminates} below ensures that whp at the end
of the routine one has $k:= n(\ln n)^{-0.45}\geq |C|=|U|$, so that
$|A|\geq n-2k$. We assume henceforth that this holds and we proceed
to Stage $2$.

\vspace{-10pt}
\begin{flushright}
\textbf{(End of Stage 1)}
\end{flushright}
\vspace{-4pt}

\textbf{(Stage 2)} Consider the intervals
$I_1:=\{a_1,a_2,\ldots,a_k\}$ and
$I_2:=\{a_{n-3k+1},a_{n-3k+2},\ldots, a_{n-2k}\}$ of $\mathcal P$
and let $D$ be the set of white edges between $I_1$ and
$I_2$. Following a fixed ordering of the set $D$, at each step first
decide with probability $q$ whether this edge should be recoloured.
If yes, then recolour it red with probability $p$ (and blue
otherwise). All these actions are taken independently. The stage is completed successfully when the first red edge from $D$ is discovered. Claim
\ref{claim:StageIterminates} below ensures that whp there will be such an edge. Assuming this, let $a_ia_j\in D$ be this edge,
where $a_i\in I_{1}$ and $a_j\in I_{2}$. The algorithm terminates by
setting $\mathcal{C}_1$ to be the red cycle formed by the vertices
$a_i,a_{i+1},\ldots, a_j$ with the red edges of $\mathcal P$
together with the red edge $a_ia_j$.

\vspace{-10pt}
\begin{flushright}
\textbf{(End of Stage 2)}
\end{flushright}
\vspace{-4pt}

In the next claim we prove that some properties which are assumed in
the algorithm hold whp.
\begin{claim}
  \label{claim:StageIterminates}
The following properties hold whp:
\begin{enumerate} [$(i)$]
\item At the end of Stage 1 we have $|C|=|U|\leq
k:=n(\ln n)^{-0.45}$.
\item During Stage 2, at least one edge in $D$ is recoloured red.
\end{enumerate}
\end{claim}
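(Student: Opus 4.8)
The plan is to prove the two parts in turn by short reductions: part $(i)$ to the pseudorandomness of $\mathcal G(n,p')$ with $p':=qp=(\ln n)^{-1/2}p$ (Lemma~\ref{pseudorandom}), and part $(ii)$ to a lower bound on $|D|$ together with the independence of the Stage~2 recolourings. Throughout, couple all recolourings with i.i.d.\ pairs of coins $(\xi_e,\zeta_e)_{e\in\binom{V}{2}}$, where $\xi_e\sim\mathrm{Bernoulli}(q)$, $\zeta_e\sim\mathrm{Bernoulli}(p)$ and all coins are mutually independent, so that whenever a pair $e$ is recoloured it turns red exactly when $\xi_e=\zeta_e=1$ (and stays white when $\xi_e=0$); then $G':=\{e:\xi_e=\zeta_e=1\}$ has the law of $\mathcal G(n,p')$, and the ``edge'' graph seen by the DFS is the restriction of $G'$ to the queried pairs.

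For part $(i)$, first note that in every round of the modified DFS the quantity $|C|-|U|$ increases by exactly one: one vertex moves per round, and whether it is a restart, a successful extension of the top vertex, or the completion of the top vertex, the move either increases $|C|$ by one or decreases $|U|$ by one. Since $|C|-|U|$ starts at $-n$, the routine runs for exactly $n$ rounds and then stops with $|C|=|U|=:s$ (and $|A|=n-2s$). By $(O1)$, every pair between $C$ and $U$ has been queried in Stage~1 and is not red, hence is a non-edge of $G'$. The hypotheses of Lemma~\ref{pseudorandom} hold with parameter $\lceil k\rceil=\lceil n(\ln n)^{-0.45}\rceil$: indeed $3\ln n\le\lceil k\rceil\le n/8$ for large $n$, and $p'\ge (\ln n)^{1/2}/n\ge \tfrac{3\ln(n/\lceil k\rceil)}{\lceil k\rceil}$ since $n/\lceil k\rceil=(\ln n)^{0.45+o(1)}$ and $(\ln n)^{0.05}$ dominates $\ln\ln n$. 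So whp $G'$ is $\lceil k\rceil$-pseudorandom. If we had $s\ge\lceil k\rceil$, then disjoint $\lceil k\rceil$-subsets of $C$ and of $U$ would span no edge of $G'$, a contradiction; hence whp $s\le\lceil k\rceil-1\le k$, which is $(i)$.

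For part $(ii)$, we work on the whp event from $(i)$, so $|A|=n-2s\ge n-2k$ and the intervals $I_1,I_2$ are well defined, of size $k$, and disjoint (since $4k<n$). There are exactly $k^2$ pairs between $I_1$ and $I_2$, and $D$ is what remains after deleting the red and the blue ones among them. After Stage~1 the red graph is a forest (its edges are the DFS tree edges), so there are at most $n-1$ red pairs in total, which is $o(k^2)$ as $k^2=n^2(\ln n)^{-0.9}$. For the blue pairs, note that a pair $\{v,w\}$ can be blue only if it was recoloured at all, i.e.\ only if $\xi_{vw}=1$; hence the number of blue edges at any vertex $v$ is at most $Y_v:=|\{w\ne v:\xi_{vw}=1\}|\sim\mathrm{Bin}(n-1,q)$, a quantity which depends only on the pre-sampled coins and not on the adaptive choices of the algorithm. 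Since $\mathbb E[Y_v]\le nq=n(\ln n)^{-1/2}\to\infty$, Lemma~\ref{Che} and a union bound over $v\in V$ give that whp $Y_v\le 2nq$ for every $v$; consequently the number of blue pairs between $I_1$ and $I_2$ is at most $k\cdot 2nq=2n^2(\ln n)^{-0.95}=o(k^2)$, and therefore $|D|\ge k^2/2$ for $n$ large.

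Finally, conditioning on the outcome of Stage~1 (which determines $D$), each pair of $D$ is independently recoloured red in Stage~2 with probability $qp=p'$, so the probability that \emph{no} pair of $D$ becomes red equals $(1-p')^{|D|}\le e^{-p'|D|}\le e^{-p'k^2/2}$; since $p'k^2/2\ge\tfrac12\,n(\ln n)^{-0.4}\to\infty$ this is $o(1)$, and combined with $\Pr[|D|<k^2/2]=o(1)$ this proves $(ii)$. The only point needing real care is the bound on the number of blue edges at a vertex: the naive estimate ``(total number of DFS queries)$\times q$'' is useless here, since the DFS may make up to $\Theta(n^2)$ unsuccessful cross-component queries in the first $n$ rounds, so one must localise at a single vertex and exploit that its blue-degree is dominated by a fixed $\mathrm{Bin}(n-1,q)$ variable that is independent of the DFS dynamics. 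Part $(i)$, by contrast, reduces cleanly to the already established pseudorandomness lemma once one observes that at the stopping time $C$ and $U$ are equal-sized sets all of whose crossing pairs were queried unsuccessfully.
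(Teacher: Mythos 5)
Your proof is correct. Part $(i)$ is essentially identical to the paper's argument: couple the Stage-1 recolourings with a graph $G'\sim\mathcal G(n,pq)$, observe via $(O1)$ that all $C$--$U$ pairs are non-edges of $G'$, and invoke Lemma \ref{pseudorandom} (your verification of its hypotheses matches the paper's computation $pq\ge (\ln n)^{0.5}/n\ge 1.35(\ln n)^{0.45}\ln\ln n/n$). Your observation that $|C|-|U|$ increases by one per round, so the stopping time is well defined, is a nice explicit touch the paper leaves implicit.

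For part $(ii)$ the paper only says it ``follows from a similar argument,'' i.e.\ it intends to reapply pseudorandomness to the disjoint $k$-sets $I_1,I_2$: whp $G'$ has an edge between them, and such an edge can be neither red (it would close a cycle in the DFS forest, as $I_1$ and $I_2$ are non-adjacent segments of the path) nor blue (a blue pair has $\zeta_e=0$, hence is not in $G'$), so it lies in $D$ and is found in Stage 2. Your route is genuinely different and more quantitative: you lower-bound $|D|\ge k^2/2$ by counting at most $n-1$ red pairs (forest) and at most $k\cdot 2nq=o(k^2)$ blue pairs, and then compute the failure probability $(1-pq)^{|D|}\le e^{-n(\ln n)^{-0.4}/2}=o(1)$ directly. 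Your blue-degree bound is handled correctly: dominating the blue degree of each vertex by the adaptivity-free variable $Y_v\sim\mathrm{Bin}(n-1,q)$ is exactly the right way to avoid the useless ``total queries times $q$'' estimate. What your approach buys is independence from the pseudorandomness lemma for this step and an explicit failure rate; what the paper's buys is brevity, since the same lemma does double duty. One small wrinkle: your opening sentence couples \emph{all} recolourings to a single pair of coins $(\xi_e,\zeta_e)$ per edge, but a pair in $D$ may already have had $\xi_e$ revealed as $0$ in Stage 1; the algorithm flips fresh coins in Stage 2 (``all these actions are taken independently''), and your final computation correctly uses those fresh coins, so the single-coin coupling should be understood as applying to Stage 1 only (which is the only place you actually use it). This is a presentational matter, not a gap.
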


\begin{proof}[Proof of Claim \ref{claim:StageIterminates}]
In order to prove $(i)$, note that during the algorithm, each pair which has been queried
has been recoloured red with probability $pq\geq n(\ln n)^{-1/2}$,
independently at random. Moreover, it follows from $(O1)$ that none of the pairs between $C$ and $U$ have been coloured red. Note
that by exploring all the edges of the graph we obtain a graph which
is distributed as $\mathcal G(n,pq)$. Now, since
\[pq\geq \frac{(\ln n)^{0.5}}{n}\ge \frac{1.35(\ln n)^{0.45}\ln\ln n}{n}= \frac{3\ln \left(n/k\right)}{k}\]
it follows from Lemma \ref{pseudorandom} that this graph is whp
$k$-pseudorandom and therefore, unless $|C|=|U|\leq k$, there must
be red edges between these sets.

Property $(ii)$ follows from a similar argument.
\end{proof}

Assuming that the properties of Claim \ref{claim:StageIterminates} hold, denoting by $R_1$,
$W_1$ and $B_1$ the sets of red, white and blue edges, respectively, at the end of
this phase's algorithm, we show that whp the following technical
conditions hold:
\begin{enumerate}
\item[I.1] the graph $R_1$ contains a cycle $\mathcal{C}_1$ of size $t$, where $2n(\ln n)^{-0.45}< n-t\leq 4n(\ln n)^{-0.45}$.
\item[I.2] at most $n$ white edges are recoloured red during this phase, i.e. $|R_1|\le n$.
\item[I.3] for every $v\in V$ we have $d_{W_1}(v,V(\mathcal{C}_1))\ge n-5n(\ln n)^{-0.45}=(1-o(1))n$.
\item[I.4] letting $\mathcal{U}:=V\setminus V(\mathcal{C}_1)$, we have $d_{R_{1}\cup
B_{1}}(v,\mathcal U)\leq 4n(\ln n)^{-0.5}=o\left(|\mathcal
U|\right)$ for every $v\in V$.
\end{enumerate}

\begin{claim}
  \label{claim:PropertiesIhold}
  Properties I.1-I.4 hold (whp).
\end{claim}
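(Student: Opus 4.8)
The plan is to dispose of I.1 and I.2 first — they are essentially deterministic given the (already established) conclusions of Claim~\ref{claim:StageIterminates} — then to prove the single genuinely probabilistic ingredient, a uniform bound on the number of non-white edges at a vertex, and finally to read off I.3 and I.4.

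For I.1: Claim~\ref{claim:StageIterminates} gives $|C|=|U|\le k:=n(\ln n)^{-0.45}$ at the end of Stage~1, so $\mathcal P$ has $|A|=n-2|U|\ge n-2k$ vertices and the intervals $I_1=\{a_1,\dots,a_k\}$, $I_2=\{a_{n-3k+1},\dots,a_{n-2k}\}$ are well defined and, for $n$ large, disjoint. Since $\mathcal C_1=a_i a_{i+1}\ldots a_j$ with $a_i\in I_1$ and $a_j\in I_2$ we have $1\le i\le k$ and $n-3k+1\le j\le n-2k$, hence $t:=|V(\mathcal C_1)|=j-i+1$ satisfies $n-t=(n-j)+(i-1)\in\{2k,\dots,4k-2\}$, which yields I.1. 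For I.2: by $(O2)$ the red graph at the end of Stage~1 is a forest, so it has at most $n-1$ edges, and Stage~2 recolours red exactly one more edge (the first red edge met in $D$), whence $|R_1|\le n$.

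The heart of the argument is to show that whp $d_{R_1\cup B_1}(v)\le 3n(\ln n)^{-1/2}$ for every $v\in V$. I would make the randomness of Phase~I explicit by attaching to every pair $e\in\binom{V}{2}$ two independent ``recolouring slots'', each of which, when used, recolours $e$ with probability $q=(\ln n)^{-1/2}$ (and then red with probability $p$); Phase~I is then a deterministic function of these bits. The decisive combinatorial point is that each pair is consulted at most once during the DFS of Stage~1 (for each active vertex the scan over $U$ only advances, and an endpoint that has been active never returns to $U$) and at most once during the single scan of $D$ in Stage~2, so it uses at most two slots. Hence the number of non-white edges at $v$ is at most the number of pairs through $v$ whose first or second slot fires, which is stochastically dominated by $\Bin(2(n-1),q)$. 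Since $\mu:=2(n-1)q\le 2n(\ln n)^{-1/2}$ and $\mu/\ln n\to\infty$, Lemma~\ref{Che} gives $\Pr\big[\Bin(2(n-1),q)\ge\tfrac32\mu\big]<e^{-\mu/12}=o(1/n)$, and a union bound over all $v$ finishes the estimate; in particular $d_{R_1\cup B_1}(v)\le 3n(\ln n)^{-1/2}\le\min\{4n(\ln n)^{-0.5},\,k-1\}$ for every $v$ once $n$ is large.

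Property I.4 is then immediate, as $d_{R_1\cup B_1}(v,\mathcal U)\le d_{R_1\cup B_1}(v)\le 4n(\ln n)^{-0.5}$. For I.3, combine the estimate with I.1: using $t\ge n-4k$ and the fact that $v$ has at most $k-1$ non-white edges,
\[
d_{W_1}\big(v,V(\mathcal C_1)\big)\ \ge\ (t-1)-d_{R_1\cup B_1}(v)\ \ge\ (n-4k-1)-(k-1)\ =\ n-5k\ =\ n-5n(\ln n)^{-0.45}.
\]
The step I expect to require genuine care is the stochastic domination: because the set of pairs the DFS actually consults is itself random (it depends on the earlier recolourings), one must reveal all the recolouring bits in advance and then argue, purely combinatorially, that every pair is consulted at most twice. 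The remaining pieces — the index bookkeeping for I.1, the forest count for I.2, and Chernoff plus a union bound — are routine.
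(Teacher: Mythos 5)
Your proof is correct and follows essentially the same route as the paper: I.1 and I.2 are handled by the same index bookkeeping and forest count, and I.3--I.4 rest on bounding the maximum degree of $R_1\cup B_1$ by coupling the recoloured pairs into a binomial random graph with parameter $q$. The only (immaterial) difference is that the paper notes each pair is recoloured at most once over the whole phase (Stage 2 only touches white edges), views $R_1\cup B_1$ as a subgraph of $\mathcal G(n,q)$ and invokes (P1) of Lemma \ref{lemma:propertiesofGnp}, whereas you use a slightly more generous two-slots-per-pair domination by $\Bin(2(n-1),q)$ and a direct Chernoff computation.
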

\begin{proof}[Proof of Claim \ref{claim:PropertiesIhold}]
Note that the red cycle $\mathcal{C}_1$ formed by the vertices
$a_i,a_{i+1},\ldots, a_{j}$ obtained at the end of the algorithm has
size $t:=j-i+1$. Moreover, assuming Claim
\ref{claim:StageIterminates}, since $1\le i\le k$ and $n-3k+1\le
j\le n-2k$ we obtain that $n-4k<n-4k+2\le j-i+1=t\le n-2k$. This
settles I.1.

Since a forest in a graph of order $n$ has less than $n$ edges,
it is clear by (O2) that in Stage 1 of the algorithm less than $n$
edges are recoloured red. Moreover, since in Stage 2 only one
edge is recoloured red, it follows that in the whole phase at
most $n$ edges are recoloured red, settling I.2.

Next, note that $R_1\cup B_1$ can be seen as part of a graph
distributed as $\mathcal{G}(n,q)$. Thus, by (P1) of Lemma
\ref{lemma:propertiesofGnp} it follows that I.4 holds whp.
Furthermore, by I.1 and I.4 we have that for every $v\in V$
\begin{align*} d_{W_1}(v,V(\mathcal{C}_1))&\ge|V(\mathcal{C}_1)|-1-d_{R_1\cup
B_1}(v,V(\mathcal{C}_1))\\
&\ge n-4n(\ln n)^{-0.45}-1-\Delta(R_1\cup B_1)\ge n-5n(\ln
n)^{-0.45}\,
\end{align*}
which settles I.3.
\end{proof}
We have shown that whp at the end of this phase all the properties I.1-I.4 hold. We shall assume henceforth that they hold for the sets $R_1$, $W_1$ and $B_1$ obtained after this Phase.

\subsection{Phase II}
In this phase we partition $\mathcal{U}$ into three sets
$\mathcal{U}=\text{EXP}_1\cup\text{SMALL}_1\cup\text{TINY}$ as
described in the outline. The algorithm for this phase is divided
into the following three stages.
\medskip

\textbf{(Stage 1)} Let $F$ be a subset of $W_1[\mathcal{U}]$ obtained by independently adding each edge in $W_1[\mathcal{U}]$ to $F$ with probability $q':=
6(\ln n)^{-0.15}$. Claim \ref{claim:StageIIterminates} below ensures that whp $\frac{2}{3}|\mathcal U|q'\le \delta(F)\le \Delta(F)\le \frac 43 |\mathcal U|q'$. Assuming this, recolour all the edges in $F$.

Taking the set $F$ at random serves two purposes. Firstly, it
ensures that not too many edges are recoloured red in this phase.
Secondly, it leaves a certain amount of randomness for the edges in
$W_1[\mathcal U]\setminus F$, which will be used in later phases.
\vspace{-14pt}
\begin{flushright}
\textbf{(End of Stage 1)}
\end{flushright}
\vspace{-4pt}

\textbf{(Stage 2)}
Let $R^1$ denote the set of red edges after Stage $1$ and set $T_0:=\{v\in \mathcal U: d_{R^1\setminus R_1}(v,\mathcal U)<\frac{1}{3}|\mathcal{U}|pq'\}$. Claim \ref{claim:StageIIterminates} ensures that whp $|T_0|\le ne^{-(\ln n)^{0.4}}$. Assuming this, starting with $i=0$, as long as there exists a vertex $v\in \mathcal{U}\setminus T_i$ having at least $3$ red neighbours in $T_i$, choose such a vertex $v$ and set $T_{i+1}:=T_i\cup \{v\}$. Let $T_{f}$ be the last set obtained in this process. Claim \ref{claim:StageIIterminates} below shows that whp $f\le |T_0|$. Assuming that, define $\text{EXP}_1:= \mathcal{U}\setminus T_f$.

Note that every vertex $v\in
\text{EXP}_1$ has at most two red neighbours in $T_f$. Thus,  by I.1 for every $v\in \text{EXP}_1$ we have $d_{R^{1}\setminus R_1}(v,\text{EXP}_1)\ge \frac 13 |\mathcal U|pq'
-2\ge 3(\ln n)^{0.4}$.
\vspace{-14pt}
\begin{flushright}
\textbf{(End of Stage 2)}
\end{flushright}
\vspace{-4pt}

\textbf{(Stage 3)} Let $P_1,\ldots, P_m$ be vertex disjoint subpaths
of the red cycle $\mathcal{C}_1$, each of size $100$, where
$m=\left\lfloor \frac{|V(\mathcal{C}_1)|}{100}\right\rfloor$, and
set $\mathcal M_1$ to be the union of all the vertices in the
subpaths $P_1,\ldots, P_m$ which are not endpoints. These paths will
be used in later phases for technical reasons to ensure that certain
vertices are not neighbours on the red cycle $\mathcal C_1$.

Next, recolour all the white edges between $T_f$ and
$V(\mathcal{C}_1)$, set $\text{SMALL}_1$ to be the set of all
vertices in $T_f$ with at least $(\ln n)^{0.5}$ red neighbours in
$\mathcal M_1$ and set $\text{TINY}:=T_f\setminus\text{SMALL}_1$.
The algorithm for this phase ends by recolouring all the edges in
$W_1[\mathcal U]\setminus F$ touching at least one vertex in
$\text{TINY}$. \vspace{-6pt}
\begin{flushright}
\textbf{(End of Stage 3)}
\end{flushright}
\vspace{-4pt}

In the next claim we prove that some properties which are assumed in
the algorithm hold whp.

\begin{claim}
  \label{claim:StageIIterminates}
All of the following properties hold whp:
\begin{enumerate}[$(i)$]
\item In Stage 1 one has $\frac 23 |\mathcal{U}|q'\le \delta(F)\le\Delta(F)\le\frac 43
|\mathcal{U}|q'$.
\item In Stage 2 one has $|T_0|\le ne^{-(\ln n)^{0.4}}$ and $f\le |T_0|$.
\end{enumerate}
\end{claim}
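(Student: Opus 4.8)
The plan is to condition on the outcome of Phase I, so that the graphs $R_1,W_1,B_1$ are fixed and satisfy properties I.1--I.4 (a whp event, by Claim \ref{claim:PropertiesIhold}), and then to analyse only the fresh randomness revealed in Stages 1--2 of Phase II; since the Phase I event has probability $1-o(1)$, a conclusion holding with conditional probability $1-o(1)$ holds with unconditional probability $1-o(1)$ as well. It is convenient to fix an underlying $G\sim\mathcal G(n,p)$ with independent edge-indicators $Z_e$ and to stipulate that whenever the algorithm recolours a pair $e$ it reads $Z_e$ and colours $e$ red iff $Z_e=1$. Since every pair is recoloured at most once over the whole run, the red graph is always a subgraph of $G$; in particular the red graph $R^1$ at the end of Stage 1 satisfies $R^1\subseteq G$, and therefore it inherits property (P2) of Lemma \ref{lemma:propertiesofGnp}, which only gets weaker under edge deletion. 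Two tools then suffice: Chernoff bounds (Lemma \ref{Che}) for the relevant vertex degrees, and (P2) for a ``small sets span few edges'' argument.

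For (i), fix $v\in\mathcal U$. By construction $d_F(v)\sim\Bin\big(d_{W_1}(v,\mathcal U),q'\big)$, and by I.1 and I.4 we have $d_{W_1}(v,\mathcal U)=|\mathcal U|-1-d_{R_1\cup B_1}(v,\mathcal U)=(1-o(1))|\mathcal U|$, because $d_{R_1\cup B_1}(v,\mathcal U)\le 4n(\ln n)^{-0.5}=o(|\mathcal U|)$ when $|\mathcal U|>2n(\ln n)^{-0.45}$. Hence $\mathbb E[d_F(v)]=(1-o(1))|\mathcal U|q'$ and, since $|\mathcal U|q'=\Theta\big(n(\ln n)^{-0.6}\big)\to\infty$, Lemma \ref{Che} with a suitably small absolute deviation parameter gives $\Pr\big[d_F(v)\notin[\tfrac23|\mathcal U|q',\tfrac43|\mathcal U|q']\big]=e^{-\Omega(n(\ln n)^{-0.6})}=o(1/n)$. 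A union bound over the at most $n$ vertices of $\mathcal U$ (the only vertices incident to an edge of $F$) proves (i).

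For the first assertion of (ii), each white $W_1$-edge inside $\mathcal U$ incident to a fixed $v$ is recoloured red in Stage 1 independently with probability $pq'$, so $d_{R^1\setminus R_1}(v,\mathcal U)\sim\Bin\big(d_{W_1}(v,\mathcal U),pq'\big)$ with mean $(1-o(1))|\mathcal U|pq'$. Using $p\ge\frac{\ln n}{n}$, $|\mathcal U|>2n(\ln n)^{-0.45}$ and $q'=6(\ln n)^{-0.15}$, this mean is at least $(1-o(1))\cdot 12(\ln n)^{0.4}$, while the threshold $\tfrac13|\mathcal U|pq'$ defining $T_0$ is below half the mean for $n$ large. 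A lower-tail Chernoff bound then gives $\Pr[v\in T_0]\le e^{-\gamma(\ln n)^{0.4}}$ for an absolute constant $\gamma>1$ (one can take $\gamma=\tfrac54$), whence $\mathbb E[|T_0|]\le n e^{-\gamma(\ln n)^{0.4}}$ and Markov's inequality yields $|T_0|\le n e^{-(\ln n)^{0.4}}$ with probability $1-e^{-(\gamma-1)(\ln n)^{0.4}}=1-o(1)$. I expect this step to require the most care: one must squeeze a constant strictly larger than $1$ out of the Chernoff exponent so that the lossy Markov step still beats the target bound $n e^{-(\ln n)^{0.4}}$, the needed slack coming from the fact that $T_0$ is defined with the constant $\tfrac13$, well below the mean.

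Finally, $f\le|T_0|$ is deterministic given the two whp facts just established. Stage 2 is a closure process: when its $m$-th vertex is absorbed it has at least $3$ neighbours in $R^1$ among the previously absorbed vertices, and across $m=1,\dots,f$ these edges are pairwise distinct (the $m$-th absorbed vertex lies outside every earlier set), so $e_{R^1}(T_m)\ge 3m$ for all $0\le m\le f$, where $T_m$ is $T_0$ together with the first $m$ absorbed vertices. If $f\ge|T_0|+1$, set $X:=T_{|T_0|+1}$; then $|X|=2|T_0|+1$ and $e_{R^1}(X)\ge 3(|T_0|+1)>\tfrac32(2|T_0|+1)=\tfrac32|X|$. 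But $|X|\le 3n e^{-(\ln n)^{0.4}}$, which for $n$ large lies below the size threshold in (P2) of Lemma \ref{lemma:propertiesofGnp} taken with $c=\tfrac32$ (that threshold is of order $n/(\ln n)^{5}$ once $p\le\frac{10\ln n}{n}$), and since $R^1\subseteq G$ we get $e_{R^1}(X)\le e_G(X)<\tfrac32|X|$, a contradiction; hence $f\le|T_0|$. The only subtlety here is to invoke (P2) with some $c\in(1,2)$ rather than $c=2$: the factor $3$ of new edges per absorbed vertex gives exactly the ratio $\tfrac32$, which pins $f$ down to $|T_0|$ and not merely to $2|T_0|$, while any $c<2$ keeps the size threshold far above $|T_0|$.
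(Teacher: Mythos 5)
Your proof is correct and follows essentially the same route as the paper: Chernoff plus a union bound for (i), a per-vertex Chernoff estimate followed by Markov for the bound on $|T_0|$, and the ``small sets span few edges'' property (P2) of Lemma \ref{lemma:propertiesofGnp} with $c=3/2$ to rule out $f>|T_0|$. The only cosmetic differences are that the paper gets the slack in the Markov step from $|\mathcal U|=o(n)$ rather than from a Chernoff exponent $\gamma>1$, and stops the closure argument at $T_{|T_0|}$ rather than $T_{|T_0|+1}$; both variants are valid.
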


\begin{proof}
First we prove that $(i)$ holds whp. For estimating
$\delta(F)$, note that by I.4 we have $d_{W_1}(v,\mathcal
U)=(1-o(1))|\mathcal U|$ for every $v\in \mathcal U$. Thus, by
Chernoff's inequality we conclude that for a vertex $v\in \mathcal
U$ we have $$\Pr\left[d_F(v)<\frac 23 |\mathcal U|q'\right]\leq
e^{-\Theta(|\mathcal U|q')}=e^{-\Theta(n(\ln n)^{-0.6})}.$$ Now, by
applying the union bound over all vertices of $v\in\mathcal U$, it follows that whp $\delta(F)\geq
\frac 23 |\mathcal U|q'$. In a similar way, we can also conclude
that whp $\Delta(F)\le \frac 43|\mathcal U|q'$. This settles $(i)$.

Assuming $(i)$, we show next that whp $|T_0|\le
ne^{-(\ln n)^{0.4}}$. Indeed, by Chernoff's inequality we see that for every $v\in \mathcal{U}$:
\[\Pr\left[v\in T_0\right]= \Pr\left[d_{R^1\setminus R_1}(v,\mathcal{U})< \frac 13 |\mathcal{U}|pq'\right]\leq \Pr\left[\text{Bin}\left(\frac 23|\mathcal{U}|q',p\right)\leq
\frac 13|\mathcal{U}|pq'\right]\leq e^{-(\ln n)^{0.4}},\]
where in the last inequality we used the fact that
$|\mathcal{U}|pq'\ge 12(\ln n)^{0.4}$ by I.1. Therefore, the expected
value of $|T_0|$ is at most $|\mathcal{U}|e^{-(\ln
n)^{0.4}}$. Hence, using Markov's inequality we obtain that whp
$|T_0|\le ne^{-(\ln n)^{0.4}}$, as desired.

Finally, we show that if $|T_0|\le ne^{-(\ln n)^{0.4}}$
then whp $f\le |T_0|$. Suppose that $f>|T_0|$. Then, note that
the set $T_{|T_0|}$ contains precisely $2|T_0|$ vertices and induces
at least $3|T_0|=1.5|T_{|T_0|}|$ red edges, since for every $i\le |T_0|$ the vertex in
$T_{i}\setminus T_{i-1}$ has at least 3 red neighbours in $T_{i-1}$. By (P2) of Lemma \ref{lemma:propertiesofGnp} (with $c= 1.5$), since $p\le \frac{10\ln n}{n}$, we know that whp every subset of vertices $X$ of size
\[|X| \le \left(\frac{1}{\ln n}\cdot \frac{3^{1.5}}{e^{2.5}np^{1.5}}\right)^{2} \le \frac{3^{3}}{e^{5}\cdot 10^{3}}\cdot \frac{n}{(\ln n)^5}\]
 induces less than $1.5|X|$ red edges. Since $|T_{|T_0|}| = 2|T_{0}|\le 2ne^{-(\ln n)^{0.4}} = o(n(\ln n)^{-5})$ it follows that whp $f\le |T_{0}|$. Thus, we conclude that whp $(ii)$ holds as claimed.
\end{proof}

Assuming that the properties of Claim \ref{claim:StageIIterminates} hold, denoting by $R_2$, $W_2$ and $B_2$ the sets of red, white
and blue edges at the end of this phase, we show that
whp the following technical conditions hold:

\begin{enumerate}
\item[II.1] Properties of the set $\text{EXP}_1$:
\begin{enumerate}
\item $|\text{EXP}_1|\ge\left(1-\frac{1}{(\ln n)^3}\right)|\mathcal U|\ge (2-o(1))n(\ln n)^{-0.45}$.
\item for every $v\in \text{EXP}_1$ we have $d_{R_2\setminus R_1}(v,\text{EXP}_1)\ge 3(\ln n)^{0.4}$.
\item for every set $U\subseteq \text{EXP}_1$  of size $|U|\ge \left(1-\frac{1}{\ln n}\right)|\text{EXP}_1|$:
\begin{enumerate}
\item if $S\subseteq U$ is a set such that $(R_2\setminus R_1)[S]$ has minimum degree at least $(\ln n)^{0.4}$ then for any set $X\subseteq S$ of size $|X|\le \frac{1}{6000}n(\ln n)^{-0.45}$ we have $|N_{R_2[S]}(X)\cup X|\ge 5|X|$.
\item there is a set $S\subseteq U$ of size $|S|\ge \frac{1}{240}n(\ln n)^{-0.45}$ such that $R_2[S]$ has diameter at most $2\ln n$.
\end{enumerate}
\item Let $\mathcal H$ be the collection of all connected graphs $H$ whose vertex set $V(H)$ is a subset of $\text{EXP}_1$ and whose edge set is of the form $K_1\cup K_2$ where $K_1=R_2[V(H)]$ is such that $|N_{K_1}(X)\cup X|\ge 5|X|\text{ for every } X\subseteq V(H) \text{ of size }|X| \le \frac{1}{6000}n(\ln n)^{-0.45}$ and $K_2\subseteq \binom{V(H)}{2}$ is a set of size $|K_2|\le |V(H)|+24000$. Then whp for every $H\in \mathcal H$ and every $e=xy\in \binom{V(H)}{2}$, if the graph $H\cup \{e\}$ does not contain a Hamilton cycle which uses the edge $e$, then the number of $e$-boosters for $H$ in the set $W_{2}[V(H)]$ is at least $10^{-8}n^2(\ln n)^{-0.9}$.
\end{enumerate}
\item[II.2] Properties of the set $\text{SMALL}_1$:
\begin{enumerate}
\item $|\text{SMALL}_1|\leq 2ne^{-(\ln n)^{0.4}}$.
\item $d_{R_2}(u,\mathcal M_1)\ge (\ln n)^{0.5}$ for every vertex $u\in \text{SMALL}_1$.
\end{enumerate}
\item[II.3] Properties of the set $\text{TINY}$:
\begin{enumerate}
\item $|\text{TINY}|\le n^{0.04}$.
\item the event ``there is no red path of size at most $1000$ between any two vertices of $\text{TINY}$ after recolouring all the edges in $W_2$" holds whp.
\item $d_{R_2}(u)\ge 2$ for every vertex $u\in \text{TINY}$.
\end{enumerate}
\item[II.4] Only $o(n)$ edges in $W_1$ are recoloured red during this phase.
\end{enumerate}

It is clear that properties II.1(a), II.1(b), II.2(a) and II.2(b) follow immediately from the algorithm. Moreover, note that at the end of this phase all the edges touching vertices in $\text{TINY}$ are either red or blue. Thus, since the event N.2 holds whp it follows that property II.3(c) also holds whp. The next few claims ensure that the remaining properties all hold whp.

\begin{claim}
  \label{claim:PropertyII.1(c)holds}
  Property II.1(c) holds whp.
\end{claim}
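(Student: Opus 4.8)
The plan is to reduce Property~II.1(c) to three facts about the random graph $F_{\mathrm{red}}$ consisting of the red edges of $F$ from Stage~1 of this phase. Condition on the entire outcome of Phase~I (so I.1--I.4 and N.1--N.4 hold). By I.4 the white graph $W_1[\mathcal U]$ misses at most $4n(\ln n)^{-0.5}$ pairs at each vertex of $\mathcal U$, and, conditionally, each edge of $W_1[\mathcal U]$ is recoloured red independently with probability $pq'$; thus $F_{\mathrm{red}}$ restricted to $\mathcal U$ is a copy of $\mathcal G(\mathcal U,pq')$ supported on $W_1[\mathcal U]$. The structural point is that $\mathrm{EXP}_1=\mathcal U\setminus T_f$ is disjoint from $\mathrm{TINY}\subseteq T_f$ and from $V(\mathcal C_1)$, so for every $S\subseteq\mathrm{EXP}_1$ we have $(R_2\setminus R_1)[S]=F_{\mathrm{red}}[S]$ and $R_2[S]\supseteq F_{\mathrm{red}}[S]$. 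I would establish, each by a routine first--moment/Chernoff union bound (Lemma~\ref{Che2}) valid whp uniformly over the random set $\mathcal U$:
\begin{enumerate}[(a)]
\item no $Y\subseteq\mathcal U$ with $|Y|\le\frac1{1200}n(\ln n)^{-0.45}$ has $e_{F_{\mathrm{red}}}(Y)>\frac1{10}|Y|(\ln n)^{0.4}$;
\item no $Y\subseteq\mathcal U$ with $|Y|\le\frac1{240}n(\ln n)^{-0.45}$ has $e_{F_{\mathrm{red}}}(Y)\ge\frac58|Y|(\ln n)^{0.4}$;
\item $F_{\mathrm{red}}[\mathcal U]$ is $k$--pseudorandom, where $k:=\lceil\frac1{6000}n(\ln n)^{-0.45}\rceil$.
\end{enumerate}
(The conditioning on the degree bounds for $F$ from Claim~\ref{claim:StageIIterminates} changes these probabilities by a factor $1+o(1)$ and may be ignored.) Lemma~\ref{pseudorandom} does not apply directly for (c), since $pq'$ is too small for pseudorandomness at polylogarithmic scale; but for this $k$ the expected number of red edges between two disjoint $k$-sets is $\Theta(n(\ln n)^{-0.05})$, which beats the $e^{O(k)}=e^{O(n(\ln n)^{-0.45})}$ choices of such a pair, so a direct union bound (using I.4 to guarantee $\ge k^2/2$ white pairs between the sets) closes.

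For part~(i) of II.1(c), fix an admissible $U$, a set $S\subseteq U$ with $(R_2\setminus R_1)[S]$ of minimum degree $\ge(\ln n)^{0.4}$, and $X\subseteq S$ with $|X|\le\frac1{6000}n(\ln n)^{-0.45}$, and suppose for contradiction that $|N_{R_2[S]}(X)\cup X|<5|X|$. Then also $|N_{(R_2\setminus R_1)[S]}(X)\cup X|<5|X|$; set $Y:=N_{(R_2\setminus R_1)[S]}(X)\cup X\subseteq\mathrm{EXP}_1$, so $e_{F_{\mathrm{red}}}(Y)=e_{(R_2\setminus R_1)}(Y)$. Every $v\in X$ has all of its at least $(\ln n)^{0.4}$ neighbours in $(R_2\setminus R_1)[S]=F_{\mathrm{red}}[S]$ lying inside $Y$, so $2e_{F_{\mathrm{red}}}(Y)=\sum_{u\in Y}d_{F_{\mathrm{red}}[Y]}(u)\ge\sum_{v\in X}d_{F_{\mathrm{red}}[Y]}(v)\ge|X|(\ln n)^{0.4}$; since $|Y|<5|X|$ this gives $e_{F_{\mathrm{red}}}(Y)>\frac1{10}|Y|(\ln n)^{0.4}$ with $|Y|<\frac1{1200}n(\ln n)^{-0.45}$, contradicting~(a). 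Hence II.1(c)(i) holds for every admissible $U,S,X$.

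For part~(ii), combining II.1(b) ($d_{(R_2\setminus R_1)}(v,\mathrm{EXP}_1)\ge3(\ln n)^{0.4}$ for $v\in\mathrm{EXP}_1$), N.1 ($\Delta(R)\le40\ln n$) and $|\mathrm{EXP}_1\setminus U|\le|\mathrm{EXP}_1|/\ln n\le2|U|/\ln n$, a short computation gives $e_{(R_2\setminus R_1)}(U)\ge\big(\tfrac32(\ln n)^{0.4}-40\big)|U|\ge\tfrac54(\ln n)^{0.4}|U|$ for $n$ large. Apply Lemma~\ref{lemma:subgraph of large minimum degree} to $(R_2\setminus R_1)[U]=F_{\mathrm{red}}[U]$ to obtain $S\subseteq U$ with $(R_2\setminus R_1)[S]$ connected of minimum degree $\ge\tfrac54(\ln n)^{0.4}$; then $e_{F_{\mathrm{red}}}(S)\ge\tfrac58|S|(\ln n)^{0.4}$, so (b) forces $|S|>\frac1{240}n(\ln n)^{-0.45}$. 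As $(R_2\setminus R_1)[S]$ has minimum degree $\ge(\ln n)^{0.4}$, part~(i) applies to $S$. Now grow balls in $R_2[S]$ from an arbitrary $u\in S$: $|B_{r+1}(u)|\ge5|B_r(u)|$ while $|B_r(u)|\le\frac1{6000}n(\ln n)^{-0.45}$, so for $R:=\lceil\log_5 n\rceil+1$ one has $|B_R(u)|\ge k$; then by (c) the set $S\setminus B_{R+1}(u)$, which sends no edge to $B_R(u)$, has size $<k<|S|/2$, i.e. $|B_{R+1}(u)|>|S|/2$. Thus $B_{R+1}(u)\cap B_{R+1}(w)\ne\emptyset$ for all $u,w\in S$, so $\mathrm{diam}(R_2[S])\le2(R+1)=2\log_5 n+O(1)<2\ln n$ for $n$ large. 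With $S\subseteq U$ and $|S|\ge\frac1{240}n(\ln n)^{-0.45}$ this is exactly II.1(c)(ii).

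The main obstacle is the arithmetic of part~(ii): Lemma~\ref{lemma:subgraph of large minimum degree} only yields minimum degree $\tfrac32(\ln n)^{0.4}-o((\ln n)^{0.4})$, so pushing $|S|$ up to $\tfrac1{240}n(\ln n)^{-0.45}$ requires the local--sparsity estimate (b) precisely at density threshold $\tfrac58(\ln n)^{0.4}$ \emph{and} requires its union bound to converge all the way up to that size; this is tight, since $pq'$ may be as large as $60(\ln n)^{0.85}/n$, for which the base of the per-set estimate, $\tfrac{48ey(\ln n)^{0.45}}{n}$, reaches $e/5<1$ exactly at $y=\tfrac1{240}n(\ln n)^{-0.45}$. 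The two further points needing care are keeping $F_{\mathrm{red}}$ (not the further-processed $R_2$) as the fresh random object throughout, and handling the pseudorandomness~(c) by a direct argument rather than via Lemma~\ref{pseudorandom}.
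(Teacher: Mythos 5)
Your proof is correct and follows essentially the same route as the paper's: the same local edge-density estimates (the paper's event (E1), your (a)--(b)) and the same crossing-edge pseudorandomness statement (the paper's (E2), your (c)), the same degree-counting contradiction for part (i), and the same use of Lemma \ref{lemma:subgraph of large minimum degree} followed by ball-growing for part (ii). The only immaterial deviation is in the diameter step, where you grow a single ball past $|S|/2$ using (c), whereas the paper grows balls from both endpoints and uses (E2) to find an edge connecting them.
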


\begin{proof}[Proof of Claim \ref{claim:PropertyII.1(c)holds}]
Consider the following two events:
\begin{enumerate}[(E1)]
\item $e_{R_2\setminus R_1}(S)<C(\ln n)^{0.4}|S|$ for any set $S\subseteq \text{EXP}_1$ of size $|S|\le\frac{C}{120}n(\ln n)^{-0.45}$, for $C\in\left\{\frac{1}{10},\frac{1}{2}\right\}$.
\item $e_{R_2}(X,Y)>0$ for every pair of disjoint sets $X,Y\subseteq \mathcal{U}$ each of size at least $\frac{1}{6000} n(\ln n)^{-0.45}$.
\end{enumerate}
We shall prove that these two events all hold whp and then that II.1(c) holds whenever N.1, II.1(b), (E1) and (E2) all hold.

The event (E1) holds whp according to (P3) of Lemma
\ref{lemma:propertiesofGnp} (with $C$ in the Lemma being $C(\ln n)^{0.4}$ here) since in Stage $1$ every edge in $W_1[\mathcal U]$ is recoloured red independently with probability $pq'\le \frac{60(\ln n)^{0.85}}{n}$. Next we show that the event (E2) also holds whp. By I.4, after Phase I we have
$e_{W_1}(X,Y)\geq (1-o(1))s^2$ for any pair of disjoint sets
$X,Y\subseteq \mathcal{U}$ each of size at least $s:=\frac{1}{6000}n(\ln
n)^{-0.45}$. Thus, by applying Chernoff's bound and the union
bound we obtain
\vspace{-2mm}
\begin{align*}
&\Pr\left[\exists \text{ such sets } X,Y \text{ with }
 e_{R_2}(X,Y)=0
\right]
\le \binom{n}{s}^2(1-pq')^{(1-o(1))s^2}\\
&\le \left(\frac{en}{s}\right)^{2s}e^{-(1-o(1))pq's^2}\le e^{O\left(n(\ln n)^{-0.45}\ln \ln n\right)}
e^{-\Omega\left(n(\ln n)^{-0.05}\right)}=o(1).
\end{align*}
implying that (E2) holds whp as desired.

Suppose now that N.1, II.1(b), (E1) and (E2) all hold. Let
$U\subseteq \text{EXP}_1$ be a subset of size $|U|\ge
\left(1-\frac{1}{\ln n}\right)|\text{EXP}_1|$ and denote $R_2\setminus R_1$ simply by $R'$.

Suppose $S\subseteq U$ is such that $R'[S]$ has minimum degree at least $(\ln
n)^{0.4}$ and that there exists a set $X\subseteq S$
of size $|X|\le \frac{1}{6000}n(\ln n)^{-0.45}$ such that $|N_{R'[S]}(X)\cup
X|< 5|X|$. Since $R'[S]$ has minimum degree at least $(\ln
n)^{0.4}$ it follows that
\[e_{R'}(N_{R'[S]}(X)\cup X)\geq \frac{1}{2}(\ln n)^{0.4}|X|\ge
\frac{1}{10}(\ln n)^{0.4}|N_{R'[S]}(X)\cup X|.\] Thus, by (E1) we have
$|N_{R'[S]}(X)\cup X|\geq \frac{1}{1200}n(\ln n)^{-0.45}$, which leads to
$|X|>\frac{1}{6000}n(\ln n)^{-0.45}$, contradicting our choice of $X$. We conclude that for every set $X\subseteq S$ of size $|X|\le \frac{1}{6000}n(\ln n)^{-0.45}$ we have $|N_{R_2[S]}(X)\cup X|\ge |N_{R'[S]}(X)\cup X|\ge 5|X|$. This settles i.\ of II.1(c).

Observe now that by N.1 and II.1(b) we have
\begin{align*}
e_{R'}(U)&\ge e_{R'}(\text{EXP}_1)-\Delta(R')\cdot |\text{EXP}_1\setminus U|\ge \frac{3}{2}(\ln n)^{0.4}|\text{EXP}_1|-40\ln n\cdot \frac{|\text{EXP}_1|}{\ln n}\ge (\ln n)^{0.4}|U|.
\end{align*}
Thus, by Lemma \ref{lemma:subgraph of large minimum degree} there
exists a set $S\subseteq U$ such that $R'[S]$ is a connected graph
with minimum degree at least $(\ln n)^{0.4}$. In particular, we have
\[e_{R'}(S)\ge \frac{1}{2}(\ln n)^{0.4}|S|\]
and so by (E1) it follows that $|S|\ge \frac{1}{240}n(\ln n)^{-0.45}$.

For $z\in S$, set $N^{0}(z):=\{z\}$ and, for $i\ge 1$, define
$N^{i}(z):=N_{R_2[S]}(N^{i-1}(z))\cup N^{i-1}(z)$. Note crucially that every vertex
in $N^{i}(z)$ is at distance at most $i$ of $z$ in $R_2[S]$ and
that, by the above, we have $|N^{i}(z)|\ge 5|N^{i-1}(z)|$,
provided $|N^{i-1}(z)|\le \frac{1}{6000}n(\ln n)^{-0.45}$. Thus, if we
take $\ell:=\log_5\left(\frac{1}{6000}n(\ln n)^{-0.45}\right)$, we
see that $|N^{\ell}(z)|\ge \frac{1}{6000}n(\ln n)^{-0.45}$ for any
$z\in S$. Now, let $x,y\in S$ be distinct. Note that if
$N^{\ell}(x)\cap N^{\ell}(y)\neq \emptyset$ then $x$ is at distance
at most $2\ell$ from $y$ in $R_2[S]$. If instead we have
$N^{\ell}(x)\cap N^{\ell}(y)=\emptyset$ then by (E2) there is at
least one edge in $R_2$ between $N^{\ell}(x)$ and $N^{\ell}(y)$,
implying that $x$ and $y$ are at distance at most $2\ell+1$ in
$R_2[S]$. Since $2\ell+1<2\ln n$, this settles ii.\ of II.1(c). We
conclude that II.1(c) holds whp as claimed.
\end{proof}

\begin{claim}\label{GnpContainsManyBoosters}
Property II.1(d) holds whp.
\end{claim}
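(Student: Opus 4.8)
The plan is to apply Lemma~\ref{lem:boosters} to each graph $H \in \mathcal{H}$, combined with a union bound over $\mathcal{H}$ that exploits the randomness remaining in $W_2[V(H)]$. The point is that $\mathcal{H}$ is not too large (roughly $n^{O(n(\ln n)^{-0.45})}$ many graphs, since $|V(H)| \le |\text{EXP}_1| = O(n(\ln n)^{-0.45})$ by II.1(a) and $|K_2| \le |V(H)| + 24000$), while for a fixed $H$ the probability that a given potential $e$-booster from $\binom{V(H)}{2}$ is actually present as a white edge (i.e. has not yet been recoloured) is bounded below — recall that in Stage~1 of Phase~II each edge in $W_1[\mathcal{U}]$ was only added to $F$ (and hence recoloured) with probability $q' = 6(\ln n)^{-0.15}$, so a typical pair inside $\text{EXP}_1$ survives in $W_2$ with probability $\ge 1 - q' - o(1)$, independently across pairs.

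First I would fix $H \in \mathcal{H}$ and a pair $e = xy \in \binom{V(H)}{2}$ such that $H \cup \{e\}$ has no Hamilton cycle through $e$. Since $K_1 = R_2[V(H)]$ satisfies $|N_{K_1}(X) \cup X| \ge 5|X|$ for all $|X| \le \frac{1}{6000}n(\ln n)^{-0.45}$, and $H$ is connected, the hypothesis of Lemma~\ref{lem:boosters} holds with $k = \frac{1}{6000}n(\ln n)^{-0.45}$ (the expansion $|N_H(X) \setminus X| \ge 2|X| + 2$ follows from $|N_{K_1}(X) \cup X| \ge 5|X|$ for $n$ large, since $5|X| - |X| = 4|X| \ge 2|X| + 2$ once $|X| \ge 1$). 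Hence $H$ has at least $\frac12(k+1)^2 \ge 10^{-8} \cdot 2 \cdot n^2 (\ln n)^{-0.9}$ many $e$-boosters in $\binom{V(H)}{2}$. Now each such booster $f$ lies in exactly one of $R_2[V(H)]$, $B_2[V(H)]$, or $W_2[V(H)]$; I would show that whp, for every $H$ and every such $e$, at most half of these boosters fail to lie in $W_2[V(H)]$, so that at least $10^{-8}n^2(\ln n)^{-0.9}$ of them survive in $W_2[V(H)]$. Equivalently, the bad event is that some $H$, some $e$, and some set $\mathcal{B}$ of $\frac12 \cdot \frac12 (k+1)^2$ pairs inside $V(H)$ are all recoloured (red or blue) — but a pair in $W_1[\mathcal{U}]$ is recoloured during Phase~II only if it is in $F$, which happens with probability $q'$, and these events are independent; the remaining recolourings in Phase~II touch only $\text{TINY}$, which is disjoint from $\text{EXP}_1 \supseteq V(H)$. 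So the probability of a fixed such bad configuration is at most $(q')^{|\mathcal{B}|} = \exp(-\Theta(n^2(\ln n)^{-0.9} \ln \ln n))$, which beats the union bound over the at most $\binom{n}{|V(H)|} \cdot 2^{\binom{|V(H)|}{2}} \cdot n^2 \cdot \binom{\binom{|V(H)|}{2}}{|\mathcal{B}|} = \exp(O(n^2 (\ln n)^{-0.9}))$ choices, with room to spare because of the $\ln \ln n$ factor in the exponent.

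\textbf{The main obstacle} will be bounding the size of $\mathcal{H}$ tightly enough — in particular controlling the factor $2^{\binom{|V(H)|}{2}}$ coming from the choice of the edge set $K_1 = R_2[V(H)]$, since $\binom{|V(H)|}{2}$ is of order $n^2(\ln n)^{-0.9}$, exactly matching the order of the exponent we are fighting against. The fix is that the $\ln \ln n$ gain in the survival exponent (from $q' = 6(\ln n)^{-0.15}$, so $\ln(1/q') = \Omega(\ln \ln n)$) dominates this polynomially-in-$n$-but-subexponential count, provided the constants are chosen so that $|\mathcal{B}| = \Omega(n^2(\ln n)^{-0.9})$ with a large enough implied constant relative to the $O(n^2(\ln n)^{-0.9})$ from the count. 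One must be slightly careful that the events ``$f \in F$'' for the boosters $f$ are genuinely independent of the conditioning that defines $\mathcal{H}$ and $e$ — but $\mathcal{H}$ depends only on $R_2$, and we are free to condition on $R_2$ (hence on which edges of $W_1[\mathcal{U}]$ landed in $F$ and turned red); the surviving white boosters are those that landed outside $F$, and $\Pr[f \notin F] = 1 - q'$ can be used directly since whether $f \in W_1$ initially is determined by earlier phases and $f \subseteq \text{EXP}_1 \subseteq \mathcal{U}$ means $f \in W_1[\mathcal{U}]$ so it was indeed subjected to the Stage~1 coin flip. Putting these estimates together and taking the union bound completes the proof.
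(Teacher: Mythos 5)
Your skeleton matches the paper's: apply Lemma~\ref{lem:boosters} with $k=\frac{1}{6000}n(\ln n)^{-0.45}$ (your verification of the expansion hypothesis is fine), observe that boosters avoid $K_1=R_2[V(H)]$, and argue that each remaining pair survives into $W_2$ because it avoids $F$ with probability $1-q'$. But two steps are wrong as written. First, the assertion that $f\subseteq \text{EXP}_1\subseteq\mathcal U$ forces $f\in W_1[\mathcal U]$ is false: Phase~I queries (and colours blue) many pairs inside $\mathcal U$ -- the DFS exposes pairs unsuccessfully, and I.4 only bounds, not eliminates, $d_{R_1\cup B_1}(v,\mathcal U)$. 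A booster lying in $B_1[V(H)]$ is ``lost'' with probability $1$, not $q'$, so your bound $(q')^{|\mathcal B|}$ does not hold for arbitrary $\mathcal B$. The repair is exactly what the paper does: by I.1 and I.4, $e_{B_1}(V(H))\le\frac12\cdot 4n(\ln n)^{-0.5}\cdot|\mathcal U|=o\bigl(n^2(\ln n)^{-0.9}\bigr)$, so only an $o(1)$ fraction of the boosters can be pre-emptively blue, and you should restrict $\mathcal B$ to $W_1$-pairs. Second, your numerics: $\frac12(k+1)^2\approx\frac{1}{72\cdot 10^{6}}n^2(\ln n)^{-0.9}\approx 1.39\cdot 10^{-8}n^2(\ln n)^{-0.9}$, which is \emph{less} than your claimed $2\cdot 10^{-8}n^2(\ln n)^{-0.9}$; consequently ``losing at most half'' leaves only about $0.69\cdot 10^{-8}n^2(\ln n)^{-0.9}$ boosters, short of the required $10^{-8}n^2(\ln n)^{-0.9}$. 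You must show that only an $o(1)$ fraction (or at any rate well under a quarter) is lost -- which your own estimate $\Pr[f\in F]=q'=o(1)$ readily gives.

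Where you genuinely diverge from the paper is the union bound. The paper conditions on $|R_2|\le 20n\ln n$ (guaranteed whp by N.1), so every $H\in\mathcal H$ has $O(n\ln n)$ edges, giving $|\mathcal H|\le e^{100n(\ln n)^2}$; a per-$(H,e)$ Chernoff bound of $e^{-\Theta(n^2(\ln n)^{-0.9})}$ then wins with enormous slack. Your count over $(V(H),K_1,K_2,e,\mathcal B)$ is $e^{\Theta(n^2(\ln n)^{-0.9})}$ -- the same order as the exponent you are fighting -- and your remark about ``choosing the constants'' cannot be acted on: they are fixed, and the constant in the count (driven by $2^{\binom{|V(H)|}{2}}$ with $|V(H)|$ up to $4n(\ln n)^{-0.45}$) is in fact far larger than the $\approx 10^{-8}$ in your exponent. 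What actually saves you is that $\ln(1/q')=\Theta(\ln\ln n)\to\infty$, which beats any fixed constant for $n$ large; alternatively, once you condition on $R_2$ the factor $2^{\binom{|V(H)|}{2}}$ is superfluous, since $K_1=R_2[V(H)]$ is determined by $V(H)$, and the count collapses to something comparable to the paper's. So the strategy is viable, but as written it both leans on a false claim (the $B_1$ issue) and fails by a constant factor at the last inequality; both defects are repairable along the lines above.
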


\begin{proof} Note first that by Lemma \ref{lem:boosters}, given such an
$H\in \mathcal H$  and $e\in \binom{V(H)}{2}$, the number of $e$-boosters for $H$ in $\binom{V(H)}{2}$ is at least $\frac{1}{72\cdot 10^6}n^2(\ln
n)^{-0.9}$. Moreover, since $e$-boosters of $H$ are not edges of $H$ and since all the edges in $R_2[V(H)]$ are edges of $H$, it follows that every $e$-booster for $H$ is either in $B_2[V(H)]$ or in $W_2[V(H)]$. Note that by properties I.4 and I.1 we have \[e_{B_1}(V(H))\le \frac{1}{2}\cdot 4n(\ln n)^{-0.5}\cdot |\mathcal U|\le 8n^2(\ln n)^{-0.95}=o\left(n^2(\ln n)^{-0.9}\right).\] Furthermore, observe that for every $e$-booster of $H$ which is not in $B_1[V(H)]$, the probability that it is in $W_2[V(H)]$ is at least the probability that it does not belong to $F$ and so at least $1-q'=1-o(1)$. Moreover, it is clear that the latter events are independent. Thus, the probability that less than $10^{-8}n^2(\ln n)^{-0.9}$ $e$-boosters for $H$ are in the set $W_2[V(H)]$ is at most
\[\Pr\left[\text{Bin}\left((1-o(1))\frac{1}{72\cdot 10^6}n^2(\ln
n)^{-0.9}, 1-o(1)\right)< (1-o(1))10^{-8}n^2(\ln n)^{-0.9}\right]\le e^{-\Theta(n^2(\ln n)^{-0.9})}\]
by Chernoff's bound (Lemma \ref{Che}).

Suppose now that $|R_2|\le 20n\ln n$. Assuming this, it is clear that any graph in $\mathcal H$ has at most $20n\ln n+n$ edges and so \[|\mathcal H|\le \sum_{i=0}^{20n\ln n+n}\binom{n^2}{i}\le (20n\ln n+n+1)\binom{n^2}{20n\ln n+n}\le e^{100n(\ln n)^2}.\] Thus, using the union bound we see that the probability that, for some $H\in \mathcal H$ and some $e=xy\in \binom{V(H)}{2}$ such that the graph $H\cup \{e\}$ does not contain a Hamilton cycle which uses the edge $e$, the number of $e$-boosters for $H$ in the set $W_2[V(H)]$ is less than $10^{-8}n^2(\ln n)^{-0.9}$, conditioning on the fact that $|R_2|\le 20n\ln n$, is at most
\[e^{100n(\ln n)^2}n^2e^{-\Theta(n^2(\ln
n)^{-0.9})}=o(1).\]
Since whp we have $|R_2|\le 20n\ln n$ according to N.1, the claim follows.
\end{proof}

\begin{claim}
  \label{claim:PropertyII.3holds}
  Properties II.3(a) and II.3(b) hold whp.
\end{claim}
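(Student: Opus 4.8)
The plan is to deduce both statements by first-moment computations, the key tool being a lower-tail estimate for the binomial distribution sharper than the Chernoff bound of Lemma~\ref{Che}. Writing $\Pr[\Bin(N,p)<k]=\sum_{j<k}\binom Nj p^j(1-p)^{N-j}$, bounding each summand by $\tfrac{(Np)^j}{j!}(1-p)^{N-k}$ and, when $k\le Np$, the whole sum by $k$ times its largest term $\tfrac{(Np)^{k-1}}{(k-1)!}\le\bigl(\tfrac{eNp}{k-1}\bigr)^{k-1}$, one obtains for $N\le n$ and $p\le 10\ln n/n$ that
\[
\Pr\bigl[\Bin(N,p)<(\ln n)^{0.5}\bigr]\ \le\ e^{-Np}\cdot(\ln n)^{0.5}\Bigl(\tfrac{eNp}{(\ln n)^{0.5}-1}\Bigr)^{(\ln n)^{0.5}-1}\ \le\ e^{-Np+o(\ln n)},
\]
which is at most $n^{-c+o(1)}$ whenever $Np\ge c\ln n$. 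I also record two deterministic facts: since $\mathcal M_1$ consists of the $98$ interior vertices of each of the $m=\lfloor|V(\mathcal C_1)|/100\rfloor$ paths $P_i$, property I.1 gives $|\mathcal M_1|=98m\ge 0.98|V(\mathcal C_1)|-100\ge 0.97n$, and then property I.3 gives $d_{W_1}(v,\mathcal M_1)\ge d_{W_1}(v,V(\mathcal C_1))-|V(\mathcal C_1)\setminus\mathcal M_1|\ge 0.97n$ for every $v\in V$.

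For \textbf{II.3(a)}, condition on I.3 and on the whp event of Claim~\ref{claim:StageIIterminates} that $|T_f|\le 2ne^{-(\ln n)^{0.4}}$. Fix $v\in T_f$. Stages~1 and~2 of Phase~II recolour only edges inside $\mathcal U$, so at the start of Stage~3 the vertex $v$ still has at least $d_{W_1}(v,\mathcal M_1)\ge 0.97n$ \emph{white} edges to $\mathcal M_1$, each recoloured red with probability $p$ independently (and independently of Stages~1--2). Thus, conditionally on Stages~1--2, the number of red neighbours of $v$ in $\mathcal M_1$ stochastically dominates $\Bin(0.97n,p)$, whence $\Pr[v\in\text{TINY}\mid\text{Stages 1--2}]\le\Pr[\Bin(0.97n,p)<(\ln n)^{0.5}]\le n^{-0.97+o(1)}$. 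Summing over $v\in T_f$ shows that, on the conditioned events, $\mathbb E[|\text{TINY}|]\le 2ne^{-(\ln n)^{0.4}}\cdot n^{-0.97+o(1)}=2n^{0.03+o(1)}=o(n^{0.04})$; Markov's inequality together with a union bound over the discarded events then gives $|\text{TINY}|\le n^{0.04}$ whp.

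For \textbf{II.3(b)}, condition on Phase~I and on the whp properties I.1, I.2, I.3 and N.1, so that $\Delta(R_1)\le 40\ln n$ and $|R_1|\le n$. Every edge of $W_1$ is recoloured exactly once in the remaining phases (or in the final recolouring of $W_2$), red with probability $p$ independently and independently of Phase~I, so the red graph after recolouring all of $W_2$ is $\widetilde R=R_1\cup Q$, where $Q$ contains each edge of $W_1$ independently with probability $p$. If $u\in\text{TINY}$ then $d_{R_2}(u,\mathcal M_1)<(\ln n)^{0.5}$, hence, writing $d_Q(u,\mathcal M_1):=|Q\cap\{um:m\in\mathcal M_1\}|\le d_{R_2}(u,\mathcal M_1)$, the event $A_u:=\{d_Q(u,\mathcal M_1)<(\ln n)^{0.5}\}$ holds; since $d_{W_1}(u,\mathcal M_1)\ge 0.97n$ the estimate above gives $\Pr[A_u]\le n^{-0.97+o(1)}$. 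Let $P=uy_1\dots y_{\ell-1}w$ be any path with $1\le\ell\le 999$ and $E(P)\cap B_1=\emptyset$ (a path meeting $B_1$ cannot become red). Deleting the at most two edges $uw,uy_1$ from the defining edge set of $A_u$, and $uw,wy_{\ell-1}$ from that of $A_w$, yields events $A_u'\supseteq A_u$ and $A_w'\supseteq A_w$, still of the form $\{\Bin(N,p)<(\ln n)^{0.5}\}$ with $N\ge 0.96n$, whose defining edge sets are pairwise disjoint subsets of $W_1$ and disjoint from $E(P)$; since $\{P\text{ red in }\widetilde R\}\subseteq\{E(P)\cap W_1\subseteq Q\}$, independence gives
\[
\Pr\bigl[u,w\in\text{TINY},\ P\text{ red in }\widetilde R\ \big|\ \text{Phase I}\bigr]\ \le\ \Pr[A_u']\,\Pr[A_w']\,p^{|E(P)\cap W_1|}\ \le\ n^{-1.92+o(1)}\,p^{|E(P)\cap W_1|}.
\]
Building such a path one step at a time, each step along an $R_1$-edge offers at most $\Delta(R_1)\le 40\ln n$ new vertices with weight $1$ and each step along a $W_1$-edge at most $n$ new vertices with weight $p$, so $\sum_{|P|=\ell}p^{|E(P)\cap W_1|}\le n(40\ln n+np)^{\ell}\le n(50\ln n)^{\ell}$; summing over $1\le\ell\le 999$ this is $n^{1+o(1)}$. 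Hence the expected number of ordered pairs $u\ne w\in\text{TINY}$ joined in $\widetilde R$ by a path on at most $1000$ vertices is at most $n^{-1.92+o(1)}\cdot n^{1+o(1)}=o(1)$, so II.3(b) holds whp.

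The delicate point is the binomial estimate: the plain Chernoff bound of Lemma~\ref{Che} gives only $\Pr[\Bin(N,p)<(\ln n)^{0.5}]\le e^{-(1-o(1))Np/2}$, i.e.\ at best $\approx n^{-1/2}$, which is useless here — one genuinely needs essentially the full exponent $Np$, obtained by keeping the factors $p^j$ in $\sum_{j<k}\binom Nj p^j(1-p)^{N-j}$ rather than replacing them by the single term $\binom N{k-1}$. That the resulting exponent exceeds $1/2$ is exactly what lets the two ``$\text{TINY}$'' factors in II.3(b) overpower the $\approx n^2$ choices of endpoints (which is also why $\mathcal M_1$ is chosen to be almost all of $V(\mathcal C_1)$), and in II.3(a) it is what lets $|\text{TINY}|$ beat the crude bound $|T_f|\le n^{1-o(1)}$. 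The remaining ingredients are routine: the decoupling costs only $O(1)$ deleted edges, and the union bound over path lengths is affordable because $(\ln n)^{999}=n^{o(1)}$.
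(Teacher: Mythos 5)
Your proof is correct. For II.3(a) it is essentially identical to the paper's argument: the same sharpened lower-tail bound $\Pr[\Bin(0.97n,p)\le(\ln n)^{0.5}]\le n^{-0.97+o(1)}$ (the paper gets $n^{-0.96}$ by the same summation $\sum_{i\le (\ln n)^{0.5}}\binom{n}{i}p^i(1-p)^{0.97n-i}$), followed by Markov's inequality against $|T_f|\le 2ne^{-(\ln n)^{0.4}}$. For II.3(b) you take a genuinely different route at the decoupling step. The paper also works in the coupled space where all remaining white edges are recoloured, but it handles the correlation between $\{u,v\in\text{TINY}\}$ and $\{E(P)\subseteq R\}$ by observing that the first event is monotone decreasing and the second monotone increasing in the recoloured edge set, and invoking the Harris/FKG-type correlation inequality (Theorem 6.3.2 of Alon--Spencer) to get $\Pr[u,v\in\text{TINY},\,E(P)\subseteq R]\le\Pr[u,v\in\text{TINY}]\Pr[E(P)\subseteq R]$; it then counts paths by conditioning on $\Delta(R)\le(\ln n)^2$ and bounding $\mathbb E[X\mid\Delta(R)\le(\ln n)^2]\le n^{1.1}$. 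You instead achieve literal independence by deleting the $O(1)$ edges of $P$ incident to $u$ and $w$ from the defining edge sets of the TINY-type events, and you count paths by a weighted sum over the deterministic graph $R_1\cup W_1$ (weight $1$ per $R_1$-edge, $p$ per $W_1$-edge), giving $n^{1+o(1)}$. Your decoupling is more elementary (no correlation inequality needed, at the harmless cost of replacing $0.97n$ by $0.96n$ in the binomial), and your weighted path count avoids the paper's conditioning on the maximum degree of the fully recoloured graph; both yield the same $n^{-1.92+o(1)}\cdot n^{1+o(1)}=o(1)$ conclusion. The one point worth making explicit in your write-up is the containment of the actual red graph after recolouring all of $W_2$ in your coupled graph $\widetilde R=R_1\cup Q$, and the fact that the edge sets defining $A_u'$, $A_w'$ and $\{E(P)\cap W_1\subseteq Q\}$ are pairwise disjoint because $u,w\in\mathcal U$ while $\mathcal M_1\subseteq V(\mathcal C_1)$ — both of which you use correctly.
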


\begin{proof}[Proof of Claim \ref{claim:PropertyII.3holds}]
Note, by the definition of the set $\mathcal M_1$, that $|\mathcal
M_1|\geq 0.98|V(\mathcal C_1)|-100$ and recall that by
I.3, at the end of Phase I, for every $v\in V$ we have
$d_{W_1}(v,V(\mathcal{C}_1))\geq n-5n(\ln n)^{-0.45}$. Therefore,
for every $v\in V$ we have (say) $d_{W_1}(v,\mathcal M_1)\geq
0.97n$. Note that for a vertex $v\in T_f$ we have $d_{R_2\setminus
R_1}(v,\mathcal M_1)\sim \Bin(d_{W_1}(v,\mathcal M_1),p)$. Thus,
since $\ln n\le np\le 10\ln n$, it follows that for any vertex $v\in
T_f$: \vspace{-5mm}
\begin{align*}
\Pr[v\in \text{TINY}] \le\Pr[\text{Bin}(0.97n,p)\le(\ln n)^{0.5}]\le\sum_{i=0}^{(\ln n)^{0.5}}\binom{n}{i}p^i(1-p)^{0.97n-i}\\
\le (1-p)^{0.97n}\sum_{i=0}^{(\ln n)^{0.5}}\left(\frac{e np}{i
(1-p)}\right)^i\le e^{-0.97\ln n}((\ln n)^{0.5}+1)(2enp)^{(\ln
n)^{0.5}} \le n^{-0.96}\,,
\end{align*}
and therefore $\mathbb{E}\left[|\text{TINY}|\right]\leq
|T_f|n^{-0.96}\leq 2n^{0.04}e^{-(\ln n)^{0.4}}$ (recall that whp
$|T_f|\leq 2ne^{-(\ln n)^{0.4}}$). Therefore, we conclude by
Markov's inequality that whp $|\text{TINY}|\leq n^{0.04}$, settling
II.3(a).

Now we show that II.3(b) also holds whp. Let $R^2$ and $W^2$ denote the sets of red and white edges after Stage $2$. We assume throughout that $\Delta(R^2)\le 40\ln n$ (which holds whp by N.1). Suppose that in Stage $3$ instead of only recolouring all the edges in $W^{2}$ between $T_f$ and $V(\mathcal{C}_1)$ we had decided to recolour all the edges in $W^{2}$. Let $R$ denote the set of red edges after this recolouring process.  We would like to stress at this point that the edges in $R^2\subseteq R$ are considered to be fixed and only the edges in $W^2$ are regarded as being randomly and independently assigned to $R$ with probability $p$.

Let $\mathcal {P}$ be the set of all paths in $K_n$ of size at most $1000$. For each $P\in \mathcal{P}$ consider the indicator random variable $X_P$ of the event that $P$ is a path in the graph formed by the edges in $R$. Finally, let $X=\sum_{P\in\mathcal{P}}X_{P}$ denote the total number of paths in $\mathcal{P}$ which are paths in $R$. Note that for each $v\in V$, the number of paths in $\mathcal{P}$ starting with $v$ which are also paths in $R$ is at most $\Delta(R)+\Delta(R)^2+\ldots +\Delta(R)^{1000}\leq
1000\cdot\Delta(R)^{1000}$. Thus, it is clear that $X\le 1000n\cdot\Delta(R)^{1000}$ and so we have
\[\mathbb{E}\left[X|\Delta(R)\le (\ln n)^2 \right]\le 1000n(\ln n)^{2000}\le n^{1.1}.\]
Moreover, note that as $\Delta(R^2)\le 40 \ln n$ we have by Lemma \ref{Che2}:
\begin{align*}
&\Pr\left[\Delta(R)> (\ln n)^2\right]\le \Pr\left[\Delta(R\setminus R^2)> 0.5(\ln n)^2\right]\le \sum_{v\in V}\Pr\left[d_{R\setminus R^2}(v)>0.5(\ln n)^2\right]\\
&\le n\Pr\left[\text{Bin}(n,p)>0.5(\ln n)^2\right]\le n \left(\frac{10e\ln n}{0.5(\ln n)^2}\right)^{0.5(\ln n)^2}\le e^{-(\ln n)^2}.
\end{align*}
For every pair of vertices $\{u,v\}\subseteq T_f$ let $\mathcal{P}_{u,v}\subseteq \mathcal{P}$ be the collection of paths in $K_n$ of size at most $1000$ with endpoints $u$ and $v$. For every $u,v\in T_f$ and $P\in \mathcal{P}_{u,v}$ consider now the families
\begin{align*}\mathcal{A}_{u,v}:=\{A\subseteq W^2: u,v\in\text{TINY}\text{ if } R=R^2\cup A\}\text{ and }\mathcal{B}_{P}:=\{B\subseteq W^2:E(P)\subseteq R  \text{ if }R=R^2\cup B\}.
\end{align*}
Observe that the families $\mathcal{A}_{u,v}$ and $\mathcal{B}_{P}$ are monotone decreasing and monotone increasing in the universe $W^2$, respectively. Furthermore, note crucially that the event ``$u,v\in \text{TINY}$" is exactly the event ``$R\setminus R^2\in \mathcal{A}_{u,v}$" and that the event ``$E(P)\subseteq R$" is exactly the event ``$R\setminus R^2\in \mathcal{B}_P$". Since each edge in $W^2$ is in $R\setminus R^2$ independently with probability $p$ it follows from Theorem $6.3.2$ of \cite{AlonSpencer} that for every $u,v\in T_f$ and $P\in \mathcal{P}_{u,v}$ we have:
\[\Pr\left[u,v\in \text{TINY and }E(P)\subseteq R\right]\le \Pr\left[u,v\in \text{TINY}\right]\cdot\Pr\left[E(P)\subseteq R\right].\]
Thus, using the union bound and the estimates above, the probability that there exist $u,v\in \text{TINY}$ and $P\in \mathcal{P}_{u,v}$ for which $E(P)\subseteq R$ is at most
\begin{align*}
&\sum_{\{u,v\}\subseteq T_f}\sum_{P\in \mathcal{P}_{u,v}}\Pr\left[u,v\in \text{TINY and }E(P)\subseteq R\right]\le \sum_{\{u,v\}\subseteq T_f}\sum_{P\in \mathcal{P}_{u,v}} \Pr\left[u,v\in \text{TINY}\right]\cdot\Pr\left[E(P)\subseteq R\right]\\
\le&\sum_{\{u,v\}\subseteq T_f}\sum_{P\in \mathcal{P}_{u,v}}\left(n^{-0.96}\right)^2\cdot\left(\Pr\left[E(P)\subseteq R|\Delta(R)\le (\ln n)^2\right]+\Pr\left[\Delta(R)> (\ln n)^2\right]\right)\\
\le & \; n^{-1.92}\sum_{\{u,v\}\subseteq T_f}\sum_{P\in \mathcal{P}_{u,v}}\left(\mathbb{E}\left[X_P|\Delta(R)\le (\ln n)^2\right]+e^{-(\ln n)^2}\right)\\
\le &\; n^{-1.92}\left(\mathbb{E}\left[X|\Delta(R)\le (\ln
n)^2\right]+e^{-(\ln n)^2}|\mathcal P|\right)\le
n^{-1.92}\left(n^{1.1}+e^{-(\ln n)^2}1000n^{1000}\right) =o(1)\,,
\end{align*}
where in the second inequality we used the fact that the events $u\in \text{TINY}$ and $v\in \text{TINY}$ are independent. This settles II.3(b).\end{proof}

\begin{claim}
  \label{claim:PropertyII.4holds}
  Property II.4 holds whp.
\end{claim}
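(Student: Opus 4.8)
The plan is to bound, separately for each of the three batches of edges that Phase~II recolours, the number of those edges that turn red, and to show that each of the three counts is $o(n)$ whp; summing the bounds then yields II.4. The three batches are: (i) the set $F\subseteq W_1[\mathcal U]$ recoloured in Stage~1; (ii) the white edges between $T_f$ and $V(\mathcal C_1)$ recoloured at the start of Stage~3; and (iii) the edges of $W_1[\mathcal U]\setminus F$ touching $\text{TINY}$ recoloured at the end of Stage~3 (Stage~2 recolours nothing). In each case I will only need a crude cardinality bound on the number of edges in the batch, together with the fact that each white edge recoloured turns red independently with probability $p\le 10\ln n/n$, after which Markov's (or Chernoff's) inequality finishes the job.

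For batch (i): every edge of $W_1[\mathcal U]$ is independently placed in $F$ with probability $q'$ and then, if placed, recoloured red with probability $p$, so the number turning red is $\Bin(|W_1[\mathcal U]|,q'p)$. By I.1 we have $|\mathcal U|\le 4n(\ln n)^{-0.45}$, hence $|W_1[\mathcal U]|\le|\mathcal U|^2\le 16n^2(\ln n)^{-0.9}$, while $q'p\le 60(\ln n)^{0.85}/n$; thus the expected number turning red is $O\!\left(n(\ln n)^{-0.05}\right)=o(n)$, and Markov's inequality gives that whp this number is $o(n)$.

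For batches (ii) and (iii) the one point requiring attention -- and essentially the only subtlety in the claim -- is the independence bookkeeping: I must verify that the white edges recoloured in each batch have not been examined in any earlier step, and that the random sets ($T_f$, $\text{TINY}$) determining which edges lie in the batch are measurable with respect to edges outside that batch, so that conditionally each recoloured edge is red with probability $p$ independently. Tracking which edges each step reads: Phase~I and Stages~1--2 of Phase~II, as well as the definitions of $T_0$ and of the nested sets $T_i$, only ever touch edges inside $\mathcal U$; hence $T_f$ is a function of edges inside $\mathcal U$ alone and is independent of the still-white edges between $\mathcal U$ and $V(\mathcal C_1)$. Conditioned on $T_f$, the number of those white edges that turn red has the law $\Bin(m,p)$ for some $m\le|T_f|\cdot n$; using $|T_f|\le|T_0|+f\le 2|T_0|\le 2ne^{-(\ln n)^{0.4}}$ whp (Claim~\ref{claim:StageIIterminates}) and $p\le 10\ln n/n$, its conditional expectation is at most $20n\ln n\cdot e^{-(\ln n)^{0.4}}=o(n)$, so whp it is $o(n)$. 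For batch (iii): $\text{TINY}=T_f\setminus\text{SMALL}_1$, and $\text{SMALL}_1$ is determined by the red edges between $T_f$ and $\mathcal M_1\subseteq V(\mathcal C_1)$ exposed in the previous step, so $\text{TINY}$ depends only on edges inside $\mathcal U$ and edges between $\mathcal U$ and $V(\mathcal C_1)$; in particular it is independent of the edges of $W_1[\mathcal U]\setminus F$, none of which have been examined up to this point. Conditioned on $\text{TINY}$, the number of edges of $W_1[\mathcal U]\setminus F$ touching $\text{TINY}$ that turn red has the law $\Bin(m',p)$ for some $m'\le|\text{TINY}|\cdot n$; using $|\text{TINY}|\le n^{0.04}$ whp (property II.3(a)) and $p\le 10\ln n/n$, its conditional expectation is at most $10n^{0.04}\ln n=o(n)$, so whp it is $o(n)$.

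Adding the three bounds, whp the total number of edges of $W_1$ recoloured red during Phase~II is $o(n)$, which is II.4. I expect every cardinality estimate and tail computation here to be routine; the only part genuinely worth checking is the measurability/freshness bookkeeping above, and even there the content is simply that the three batches are processed in an order that keeps the relevant edges unexposed until the moment they are recoloured.
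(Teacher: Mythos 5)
Your proposal is correct and follows essentially the same route as the paper: it splits Phase II's recolourings into the same three batches (the set $F$ in Stage 1, the $T_f$--$V(\mathcal C_1)$ edges, and the edges touching $\mathrm{TINY}$), bounds the number of candidate edges in each via I.1, Claim \ref{claim:StageIIterminates} and II.3(a), and then controls the binomial count of red outcomes. The only cosmetic differences are that you use the trivial bound $|W_1[\mathcal U]|\le|\mathcal U|^2$ in place of the paper's $\Delta(F)|\mathcal U|$ bound and Markov in place of Chernoff, and that you spell out the independence bookkeeping the paper leaves implicit; all of this is fine.
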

\begin{proof}[Proof of Claim \ref{claim:PropertyII.4holds}]
In this phase edges are recoloured once in Stage $1$ and in two instances in Stage $3$. We shall bound the number of edges recoloured red in these three instances.

In Stage $1$ we recoloured all the edges of $F$, of which there are at most $\frac{1}{2}\Delta(F)\cdot |\mathcal U|$. Since by the algorithm we have whp that $\Delta(F)\le \frac{4}{3}|\mathcal U|q'$ we conclude using Chernoff's bounds and I.1 that whp the number of edges recoloured red in Stage $1$ is at most
\[\Delta(F)\cdot |\mathcal U|\cdot p\le \frac{4}{3}|\mathcal U|q'\cdot|\mathcal U|p=O\left(\frac{n}{(\ln n)^{0.05}}\right)=o(n)\ .\]

In Stage $3$ we recoloured all the white edges between $T_f$ and $V(\mathcal{C}_1)$ and all the white edges touching vertices of $\text{TINY}$. Since, from the algorithm, $|T_f|\le 2|T_0|\le  2ne^{-(\ln n)^{0.4}}$, we conclude from Chernoff's bounds that whp the number of edges between $T_f$ and $V(\mathcal{C}_1)$ which are recoloured red is at most
\[2|T_f|\cdot |V(\mathcal C_1)|\cdot p\le 4ne^{-(\ln n)^{0.4}}\cdot n \cdot p=O\left(\frac{n\ln n}{e^{(\ln n)^{0.4}}}\right)=o(n)\ .\]
Finally, since there are at most $|\text{TINY}|n$ edges touching vertices of $\text{TINY}$, we can use Chernoff's bounds together with II.3(a) to conclude that whp the number of edges touching vertices of $\text{TINY}$ which are recoloured red is at most
\[2|\text{TINY}|\cdot n\cdot p=O\left(n^{0.04}\ln n\right)=o(n)\ .\]
Thus, whp $o(n)$ edges in $W_1$ are recoloured red in this phase, proving the claim.
\end{proof}

We have shown that whp at the end of this phase all of the properties II.1--II.4 hold. We shall assume henceforth that all these properties hold for the sets $R_2$, $W_2$ and $B_2$.

\subsection{Phase III}
In this phase, we want to find a red cycle $\mathcal C_2$ containing
$\text{TINY}\cup V(\mathcal C_1)$ as described in the outline.
Recall that in Stage 3 of Phase II we nearly decomposed the red cycle
$\mathcal C_1$ into
$m=\left\lfloor\frac{|V(\mathcal{C}_1)|}{100}\right\rfloor$ red
paths $P_1,\ldots,P_m$, needed for technical reasons for later
phases. We ensure in this phase that the red cycle $\mathcal C_2$
will be such that most of these paths are also paths in
$\mathcal{C}_2$. Concretely, we obtain a set $J\subseteq [m]$ of
size $|J|\ge (1-o(1))m$ such that all the paths $(P_j)_{j\in J}$ are
paths in the red cycle $\mathcal{C}_2$. At the end of this phase we
get a partition of the vertex set $V=V(\mathcal{C}_2)\cup
\text{EXP}_2\cup\text{SMALL}_2$ where $\text{EXP}_2\subseteq
\text{EXP}_1$ is a ``good expander" and $\text{SMALL}_2\subseteq
\text{SMALL}_1$ is such that every vertex $v\in \text{SMALL}_2$ has
``large" red degree onto the set $\mathcal M_2$ which is the union
of all the vertices of the paths $(P_j)_{j\in J}$ which are not
endpoints.

The algorithm for this phase is divided into $t=|\text{TINY}|$
parts. For each $i\in [t]$ we define during Part $i$ the following
sets:
\begin{itemize}
\item $R^{i}$ and $W^{i}$ which denote, respectively, the sets of all edges which are coloured red and white at the end of Part $i$.
\item $\text{EXP}^{i}\subseteq \text{EXP}_1$, $\text{SMALL}^{i}\subseteq \text{SMALL}_1$ and $\mathcal U^{i}\subseteq \mathcal{U}$, the latter being the union of $\text{EXP}^{i}$, $\text{SMALL}^{i}$ and $t-i$ vertices of $\text{TINY}$.
\end{itemize}
During Part $i$, we recolour ``some" edges in $W^{i-1}$ in order to
obtain a red cycle $\mathcal{C}^{i}$ (i.e. consisting solely of
edges in $R^{i}$) such that $V(\mathcal C^{i})=V\setminus \mathcal
U^{i}$ contains $V(\mathcal{C}_1)$, $i$ vertices from $\text{TINY}$
and ``few" vertices from $\mathcal U$. During this phase's algorithm
we keep track of which red paths $(P_j)_{j\in [m]}$ are also paths
in the red cycle $\mathcal{C}_2$. To this end, we use the function
$j:V\rightarrow \{0,1,\ldots, m\}$ defined as:
\[j(v):=\left\{\begin{matrix}
j & \text{if } v\in V(P_j) \text{ for } j\in [m]\\
0 & \text{if } v\notin \bigcup_{j=1}^{m}V(P_j)
\end{matrix}\right..\]
During Part $i$ we maintain a set $J^{i}\subseteq [m]$ such that for
every $j\in J^{i}$ the path $P_j$ is a path in the cycle $\mathcal
C^{i}$. The algorithm for this phase is as follows:

\medskip

{\bf Algorithm:} Fix an enumeration $x_1,x_2,\ldots, x_t$ of the
vertices in $\text{TINY}$, where $t=|\text{TINY}|$, and set
$\mathcal{C}^{0}:=\mathcal{C}_1$, $\mathcal U^{0}:=\mathcal{U}$,
$\text{EXP}^{0}:= \text{EXP}_1$, $\text{SMALL}^{0}:=\text{SMALL}_1$,
$R^{0}:=R_2$, $W^{0}:=W_2$ and $J^{0}:=[m]$. For $i=1,2,\ldots,t$
execute the following routine which shows how to add $x_i$ to the
red cycle $\mathcal C^{i-1}$:

{\bf Routine:} Recall from II.3(c) that $d_{R_2}(x_i)\ge 2$. Thus,
since $R_2\subseteq R^{i-1}$, exactly one of the following holds:
\begin{enumerate}[(a)]
\item $d_{R^{i-1}}(x_i,V(\mathcal C^{i-1}))\geq 1$ and $d_{R^{i-1}}(x_i,\mathcal U^{i-1})\ge 1$.
\item $d_{R^{i-1}}(x_i, V(\mathcal C^{i-1}))\ge 2$ and $d_{R^{i-1}}(x_i,\mathcal U^{i-1})=0$.
\item $d_{R^{i-1}}(x_i, V(\mathcal C^{i-1}))=0$ and $d_{R^{i-1}}(x_i,\mathcal U^{i-1})\ge 2$.
\end{enumerate}
We proceed depending on which of the cases above holds. For each of these cases we consider two red neighbours of $x_i$ and depending on whether they lie in $V(\mathcal C^{i-1})$ or in $\mathcal  U^{i-1}$ we use them in a certain way to incorporate $x_i$ into the red cycle $\mathcal C^{i-1}$. For the
sake of simplicity, we only describe here how to proceed if (a) holds. However, we stress that this case contains all the ideas necessary for treating the other two cases. Essentially, for case (b) (resp. (c)) the two red neighbours of $x_i$ considered should be treated as the red neighbour of $x_i$ in case (a) which lies in $V(\mathcal C^{i-1})$ (resp. $\mathcal U^{i-1}$). If case (a) holds, proceed as follows:

Fix a cyclic enumeration $v_1v_2\ldots v_{\ell}$ of
the vertices in the red cycle $\mathcal C^{i-1}$, where
$\ell=|V(\mathcal C^{i-1})|$ (indices considered modulo $\ell$), and let $z^{i}_1\in V(\mathcal C^{i-1})$ and $z^{i}_2\in \mathcal U^{i-1}$ be two red neighbours of $x_i$. Without loss of generality we assume that $z^{i}_1=v_{\ell}$.

Set $\text{BAD}_{i}:=\{v\in
\text{EXP}^{i-1}:d_{R_2}(v,\text{EXP}^{i-1})<3(\ln n)^{0.4}\}$ to
be the set of all vertices in $\text{EXP}^{i-1}$ with ``low" red
degree inside $\text{EXP}^{i-1}$ and set
$\text{GOOD}_{i}:=\text{EXP}^{i-1}\setminus(\text{BAD}_{i}\cup
N_{R_2}(\text{BAD}_{i})\cup \{z^{i}_2\})$. Let $S_{i}\subseteq
\text{GOOD}_{i}$ be a subset of size at least $\frac{1}{240}n(\ln n)^{-0.45}$
such that $R^{i-1}[S_{i}]$ is a connected graph of diameter at most
$2\ln n$. Claim \ref{claim:PhaseIIIterminates} below ensures that whp such a set $S_i$ exists and we assume this henceforth.

For each $v_j\in V(\mathcal C^{i-1})$ define $s^{+}(v_j):=v_{j+1}$ and $s^{-}(v_j):=v_{j-1}$ to be the ``successor" and ``predecessor" of $v_j$ in the cycle $V(\mathcal C^{i-1})$ (notice that $s^{+}(z_1^{i}) = s^{+}(v_{\ell})=v_1$). Recolour all the edges in $W^{i-1}$ between $\{s^{+}(z^{i}_{1}),z^{i}_{2}\}$ and $V(\mathcal{C}_1)$ and, letting $R^{i}_1$ and $W^{i}_1$ denote, respectively, the sets of red and white edges at this point, consider the sets $A^{i}_1:=N_{R^{i}_1}(s^{+}(z^{i}_1),V(\mathcal C_1))\setminus \{z^{i}_1\}$ and $A^{i}_2:=N_{R^{i}_1}(z^{i}_2,V(\mathcal C_1))\setminus\{z^{i}_1,s^{+}(z^{i}_1)\}$. Note that for any two vertices $v_{a}\in A^{i}_1$ and $v_{b}\in
A^{i}_2$ we have a red path
\[P(v_a,v_b):=\left\{\begin{matrix}
v_{a-1}v_{a-2}\ldots v_2 v_1 v_av_{a+1}\dots v_{b-1}v_b z^{i}_2 x_i v_l v_{l-1}\ldots v_{b+1} & \text{if } 1<a\le b<l\\
v_{a-1}v_{a-2}\ldots v_{b+1}v_b z^{i}_2 x_i v_l v_{l-1}\ldots v_{a+1}v_a v_1 v_2 \ldots v_{b-1} & \text{if } 1<b<a<l
\end{matrix}\right.\]
from $s^{-}(v_a)=v_{a-1}$ to either $s^{+}(v_b)=v_{b+1}$ or $s^{-}(v_b)=v_{b-1}$ such that
$V(P(v_a,v_b))=V(\mathcal C^{i-1})\cup \{x_i,z^{i}_2\}$. Define $B^{i}_1:=\{s^{-}(v):v\in A^{i}_1\}$ to be the set of possible initial vertices of these paths.

Recolour all the edges in $W^{i}_1$ between vertices in $B^{i}_1$ and $S_{i}$ and, letting $R^{i}_2$ and $W^{i}_2$ denote, respectively, the sets of red and white edges at this point, let $y^{i}_1\in B^{i}_1$ and $u^{i}_1\in S_{i}$ be such that $y^{i}_1u^{i}_1\in R^{i}_2$. Claim \ref{claim:PhaseIIIterminates} ensures that whp such vertices exist and we assume this henceforth. Define now $B^{i}_2$ to be the set of possible final vertices of the paths $P(s^{+}(y^{i}_1),v)$ where $v\in A^{i}_2$.

Recolour all the edges in $W^{i}_2$ between vertices in $B^{i}_2$ and $S_{i}$ and, letting $R^{i}$ and $W^{i}$ denote, respectively, the sets of red and white edges at this point, let $y^{i}_2\in B^{i}_2$ and $u^{i}_2\in S_{i}$ be such that $y^{i}_2u^{i}_2\in R^{i}$. Claim \ref{claim:PhaseIIIterminates} ensures that whp such vertices exist and we assume this henceforth. Moreover, let $s(y^{i}_2)\in \{s^{+}(y^{i}_2),s^{-}(y^{i}_2)\}$ be the vertex of $V(\mathcal C^{i-1})$ such that $y^{i}_2$ is the final vertex of the red path $P(s^{+}(y^{i}_1),s(y^{i}_2))$.

Let $P(u^{i}_1,u^{i}_2)$ be a path inside $S_{i}$ from $u^{i}_1$ to
$u^{i}_2$ of length at most $2\ln n$ consisting solely of edges in
$R^{i}$ (such a path exists by the choice of $S_{i}$) and set
$\mathcal C^{i}$ to be the red cycle formed by joining the red paths
$P(s^{+}(y^{i}_1),s(y^{i}_2))$ and $P(u^{i}_1,u^{i}_2)$ with the red
edges $y^{i}_1u^{i}_1$ and $y^{i}_2u^{i}_2$. Furthermore, set
$J^{i}:=J^{i-1}\setminus (\{j(z^{i}_1)\}\cup
\{j(y^{i}_1)\}\cup\{j(y^{i}_2)\})$ to be the set of indices
obtained by deleting the indices of the paths we ``broke" during
this routine, and note that every path $P_j$ with $j\in J^i$ is still
a subpath of the red cycle $\mathcal C^i$ (provided it was also a
subpath of $\mathcal C^{i-1}$). Finally, set
$\text{EXP}^{i}:=\text{EXP}^{i-1}\setminus V(\mathcal C^{i})$,
$\text{SMALL}^{i}:=\text{SMALL}^{i-1}\setminus V(\mathcal{C}^{i})$
and $\mathcal{U}^{i}:=\mathcal{U}^{i-1}\setminus V(\mathcal C^{i})$.
\vspace{-14pt}
\begin{flushright}
\textbf{(End of Routine)}
\end{flushright}
\vspace{-4pt}
To end the algorithm set $\text{EXP}_2:=\text{EXP}^{t}$,
$\text{SMALL}_2:=\text{SMALL}^{t}$, $\mathcal{C}_2:=\mathcal C^{t}$,
$J:=J^{t}$ and $\mathcal M_2$ to be the union of all the inner
vertices of the paths $(P_j)_{j\in J}$. \vspace{-14pt}
\begin{flushright}
\textbf{(End of Algorithm)}
\end{flushright}
\vspace{-4pt}

We make a few observations about the procedure above which will be important for later:
\begin{enumerate}[(O1)]
\item For every $i\in [t]$ we have $|\text{EXP}^{i-1}\setminus \text{EXP}^{i}|\le 2\ln n + 3$. This is because in Part $i$ we have $\text{EXP}^{i-1}\setminus \text{EXP}^{i}\subseteq V(P(u^{i}_1,u^{i}_2))\cup \{z^{i}_1,z^{i}_2\}$ where $P(u^{i}_1,u^{i}_2)$ is a path of size at most $2\ln n +1$ (we might need to remove not just $z^{i}_2$ but also $z^{i}_1$ from $\text{EXP}^{i-1}$ if case (c) in the algorithm holds). Moreover, since by II.1(b) we have $d_{R_2}(v,\text{EXP}_1)\ge 3(\ln n)^{0.4}$ for every $v\in \text{EXP}_1$, every vertex $v\in \text{BAD}_{i}$ must have at least one red neighbour in $\text{EXP}_1\setminus \text{EXP}^{i-1}$. Thus, we have $|\text{BAD}_{i}|\le |\text{EXP}_1\setminus \text{EXP}^{i-1}|\cdot \Delta(R_2)\le (2\ln n+3)(i-1)\cdot \Delta(R_2)$.

\item For every $i\in [t]$ and every $j\in \{1,2\}$ we have $d_{R^{i}}(v^{i}_{j},V(\mathcal{C}_1))-2\le |B^{i}_j|\le d_{R^{i}}(v^{i}_{j},V(\mathcal{C}_1))$ for some vertex $v^{i}_{j}$ which is at distance at most $2$ from $x_i$ in $R_2$. For example, if case (a) holds in Part $i$ then we have $v^{i}_{1}=s^{+}(z^{i}_1)$ and $v^{i}_{2}=z^{i}_2$. Moreover, for every $i\in [t]$ and every vertex $v\in B^{i}_1\cup B^{i}_2$ there is a path in $R^{i}$ of length at most $4$ from $x_i$ to $v$. This follows immediately from the definition of the sets $B^{i}_{j}$.

\item We have $N_{W^{i-1}}(v,\text{EXP}^{i-1})=N_{W_1}(v,\text{EXP}^{i-1})$ for every $v\in V(\mathcal{C}_1)\setminus \left(\bigcup_{j=1}^{i-1}\left(B^{j}_1\cup B^{j}_2\right)\right)$ and every $i\in [t+1]$. This is because between Phase I and Part $i$ of this phase's algorithm the only edges that are recoloured between $V(\mathcal{C}_1)$ and $\text{EXP}^{i-1}$ touch vertices of $\bigcup_{j=1}^{i-1}\left(B^{j}_1\cup B^{j}_2\right)$.
\end{enumerate}
Also, for the rest of this phase, we shall assume that the following event occurs:
\begin{enumerate}[(E1)]
\item for every $i\in [t]$ there is no path of size at most $1000$ consisting solely of edges in $R^{i}$ between any two vertices in $\text{TINY}$.
\end{enumerate}
Note that this event occurs whp as indicated in II.3(b). In the next claim we prove that some properties which are assumed in the algorithm hold whp.

\begin{claim}\label{claim:PhaseIIIterminates}
All of the following properties hold whp:
\begin{enumerate}[$(i)$]
\item For any $i\in[t]$ there always exists a set $S_i\subseteq \text{GOOD}_{i}$ of size at least $\frac{1}{240}n(\ln n)^{-0.45}$ such that $R^{i-1}[S_i]$ is a connected graph of diameter at most $2\ln n$.
\item For any $i\in [t]$, in Part $i$, after recolouring all the edges in $W^{i-1}$ between the sets $S_i$ and $B^{i}_1\cup B^{i}_2$, there exist $y^{i}_1\in B^{i}_1$, $y^{i}_2\in B^i_2$ and $u^{i}_1,u^{i}_2\in S_i$ such that
$y^{i}_ju^{i}_j\in R^i$ for $j\in \{1,2\}$.
\end{enumerate}
\end{claim}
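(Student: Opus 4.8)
This part is essentially immediate from the properties proved in Phase~II. I would apply property II.1(c)(ii) with the set $U:=\text{GOOD}_i$. One has $\text{GOOD}_i\subseteq\text{EXP}^{i-1}\subseteq\text{EXP}_1$, so the only thing to check is $|\text{GOOD}_i|\ge\left(1-\tfrac1{\ln n}\right)|\text{EXP}_1|$. Observation (O1) gives $|\text{EXP}_1\setminus\text{EXP}^{i-1}|\le(2\ln n+3)(i-1)$ and $|\text{BAD}_i|\le(2\ln n+3)(i-1)\Delta(R_2)$, hence $|N_{R_2}(\text{BAD}_i)|\le\Delta(R_2)|\text{BAD}_i|$; combining this with N.1 ($\Delta(R_2)\le40\ln n$) and II.3(a) ($i\le t\le n^{0.04}$) yields $|\text{EXP}_1\setminus\text{GOOD}_i|=O\!\left(n^{0.04}(\ln n)^3\right)$, which is $o\!\left(|\text{EXP}_1|/\ln n\right)$ since $|\text{EXP}_1|=\Theta\!\left(n(\ln n)^{-0.45}\right)$ by II.1(a). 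Thus II.1(c)(ii) produces $S\subseteq\text{GOOD}_i$ with $|S|\ge\tfrac1{240}n(\ln n)^{-0.45}$ and $R_2[S]$ of diameter at most $2\ln n$; since the algorithm only ever recolours white edges, $R_2=R^0\subseteq R^{i-1}$, so $R^{i-1}[S]\supseteq R_2[S]$ is again connected of diameter at most $2\ln n$. Taking $S_i:=S$ settles part (i), \emph{deterministically} on the whp event that II.1(c), N.1 and II.3(a) hold; no additional union bound is needed here.

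\textbf{Part (ii): strategy.} Fix $i\in[t]$ and recall $|S_i|\ge\tfrac1{240}n(\ln n)^{-0.45}$. I would reduce both desired red edges (the one from $B_1^i$ and, after $y_1^i$ is chosen, the one from $B_2^i$) to the following mechanism: if at the moment of recolouring there are $N$ white edges between the relevant set $B_j^i$ and $S_i$, then the probability that none of them is recoloured red is at most $(1-p)^N\le e^{-pN}$. Since the union bound over $i\le t\le n^{0.04}$ must come out $o(1)$, I need per-$i$ failure probability $o(n^{-0.04})$, hence $pN=\omega(\ln n)$, i.e. $N=\omega(n)$. As $|S_i|=\Theta\!\left(n(\ln n)^{-0.45}\right)$, this forces me to exhibit $\Omega(\ln n)$ vertices inside $B_j^i$, \emph{each} having $\ge(1-o(1))|S_i|$ white neighbours in $S_i$ at the time of recolouring: then $N\ge\Theta(\ln n)\cdot\Theta\!\left(n(\ln n)^{-0.45}\right)=\Theta\!\left(n(\ln n)^{0.55}\right)$ and $pN=\Theta\!\left((\ln n)^{1.55}\right)=\omega(\ln n)$, as required.

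\textbf{Part (ii): supplying the vertices.} Two things must be shown. First, $|A_1^i|=\Omega(\ln n)$ and $|A_2^i|=\Omega(\ln n)$ whp (with per-$i$ failure probability a small fixed power of $n$, which beats $n^{-0.04}$). Here I use the dichotomy: either the source vertex ($s^{+}(z_1^i)$ for $A_1^i$, $z_2^i$ for $A_2^i$) is ``fresh'', in which case recolouring its $\ge(1-o(1))n$ white edges to $V(\mathcal C_1)$ yields $\Theta(np)=\Theta(\ln n)$ red neighbours there by Chernoff; or it already had those edges recoloured in an earlier part (as some $s^{+}(z_1^{j})$ or $z_2^{j}$, $j<i$), in which case that earlier recolouring already endowed it with $\Theta(\ln n)$ red neighbours in $V(\mathcal C_1)$. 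The fresh case relies on (E1) (the red path $x_i\,z_1^i\,s^{+}(z_1^i)$ of size $3$ forces $z_1^i,s^{+}(z_1^i)\notin\text{TINY}$) and on the fact that no $\text{SMALL}_1$ vertex is ever absorbed into a red cycle during Phase~III (so $s^{+}(z_1^i)\notin T_f$, hence its white edges to $V(\mathcal C_1)$ were untouched in Phase~II); the analogous placement of $z_2^i$ uses (E1) plus II.2(b) ($d_{R_2}(z_2^i,\mathcal M_1)\ge(\ln n)^{0.5}$) to cover the sub‑case $z_2^i\in\text{SMALL}_1$. Second, I would route $B_j^i$ through \emph{interior} vertices of the paths $P_\ell$ with $\ell\in J^{i-1}$: since at most $3(i-1)\le 3t=o(m)$ paths have been broken and $V(\mathcal C_1)$ consists almost entirely of such interior vertices, whp $\Omega(\ln n)$ of the vertices counted in $A_j^i$ are interior vertices of surviving $P_\ell$'s, and for such a vertex $v$ its predecessor/successor lies in $V(P_\ell)\subseteq V(\mathcal C_1)$ and has not previously served as a $B$‑, $z$‑ or $S$‑vertex, so by I.3, I.4, N.1 and observation (O3) it retains $\ge(1-o(1))|S_i|$ white neighbours in $S_i\subseteq\text{EXP}^{i-1}$.

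\textbf{Main obstacle.} The genuine difficulty is the bookkeeping of which $p$‑coins are still unexposed, compounded by the adaptivity in the definition of $B_2^i$ (it is built from $y_1^i$, chosen \emph{after} the first recolouring of the part). For the adaptivity I would not chase a single good vertex of $B_2^i$ but instead prove the uniform statement that whp, after recolouring all white $B_1^i$–$S_i$ edges, \emph{every} vertex of $B_1^i$ with $\ge(1-o(1))|S_i|$ white neighbours in $S_i$ acquires a red neighbour in $S_i$ (a union bound over the $|B_1^i|\le\Delta(R^{i-1})=O(\ln n)$ vertices of $B_1^i$); then $B_2^i\cap B_1^i$ is automatically harmless, while any vertex of $B_2^i\setminus B_1^i$ still has its $S_i$‑edges untouched by the first recolouring. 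For the exposure bookkeeping I would maintain the single invariant (essentially (O3)): at the start of Part~$i$, the white neighbourhood inside $\text{EXP}^{i-1}$ of any vertex of $V(\mathcal C_1)$ that has not yet been used as a $B$‑vertex, a $z$‑vertex or an $S$‑path vertex equals its post‑Phase‑I white neighbourhood there. This lets me dispatch each recolouring the algorithm has performed so far: the $F$‑ and $\text{TINY}$‑recolourings of Phase~II stay inside $\mathcal U$ and do not touch these edges; the $T_f$‑to‑$V(\mathcal C_1)$ recolouring is irrelevant because our chosen vertices avoid $T_f$; the $\{s^{+}(z_1^{j}),z_2^{j}\}$‑to‑$V(\mathcal C_1)$ recolourings of earlier parts are disjoint from the $B_j^i$–$S_i$ edges because $S_i\subseteq\mathcal U$ is disjoint from $V(\mathcal C_1)$; and the $B^{j}$‑to‑$S_j$ recolourings of earlier parts, together with the within‑part $B_1^i$–$S_i$ recolouring, erase at most $O(n^{0.04}\ln n)=o(|S_i|)$ edges at any fixed fresh vertex. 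All the failure probabilities above are fixed powers of $n$ (or smaller), which survive the final union bound over $i\in[t]$.
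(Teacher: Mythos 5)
Your part (i) and the overall architecture of part (ii) match the paper's proof: the paper likewise feeds $U=\text{GOOD}_i$ into II.1(c), lower-bounds $|B^i_j|$ via the trichotomy $v^i_j\in\text{TINY}$ (impossible by (E1)) / $v^i_j\in\text{SMALL}_1$ (use II.2(b)) / $v^i_j\in\text{EXP}_1\cup V(\mathcal C_1)$ (fresh Chernoff), shows the $B$-sets of different parts are disjoint via (E1) and (O2) so that (O3) and I.4 give $(1-o(1))|S_i|$ white neighbours in $S_i$, and finishes with a $(1-p)^N$ bound and a union bound over $2t$ events. (Two cosmetic remarks: in the $\text{SMALL}_1$ sub-case you only get $(\ln n)^{0.5}-O(1)$ vertices, not your announced $\Omega(\ln n)$, but $pN=\Theta((\ln n)^{1.05})=\omega(\ln n)$ still holds, which is what the paper uses; and your ``already recoloured earlier'' branch of the dichotomy is vacuous, since a coincidence of source vertices across parts would contradict (E1).)

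There is, however, one genuine quantitative gap: your device for the adaptivity of $B^i_2$. You propose the uniform statement that, after the first recolouring, \emph{every} vertex of $B^i_1$ with $(1-o(1))|S_i|$ white neighbours in $S_i$ acquires a red neighbour there. The per-vertex failure probability of this event is $(1-p)^{(1-o(1))|S_i|}=e^{-\Theta(p|S_i|)}=e^{-\Theta((\ln n)^{0.55})}=n^{-o(1)}$, which is \emph{not} ``a fixed power of $n$ or smaller'': multiplied by $t\le n^{0.04}$ (and $|B^i_1|=O(\ln n)$) the union bound diverges, so this statement is not established (and there is no reason to believe it holds). The whole point of exhibiting $\Omega((\ln n)^{0.5})$ vertices in $B^i_j$ is precisely that a single vertex's $\Theta(n(\ln n)^{-0.45})$ edges into $S_i$ are not enough to beat $n^{-0.04}$; you cannot then retreat to a per-vertex guarantee to dispose of $B^i_2\cap B^i_1$. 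The paper's fix is aggregate rather than pointwise: it chooses disjoint subsets $C^i_1\subseteq B^i_1$ and $C^i_2\subseteq B^i_2$, each of size at least $\tfrac12(1-o(1))|B^i_j|\ge\tfrac13(\ln n)^{0.5}$, and bounds the probability that the \emph{whole set} $C^i_2$ sends no red edge into $S_i$ by $(1-p)^{(1-o(1))|S_i||C^i_2|}\le 1/n$, which does survive the union bound. To make your route work you would have to show instead that $|B^i_2\setminus B^i_1|$ (whose $S_i$-edges are genuinely untouched after step one) is still $\Omega((\ln n)^{0.5})$, i.e. bound $|B^i_1\cap B^i_2|$, and then run the aggregate $(1-p)^N$ estimate on that set conditionally on the outcome of the first recolouring.
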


\begin{proof}[Proof of Claim \ref{claim:PhaseIIIterminates}]
We start by proving that $(i)$ holds whp. Assuming that $\Delta(R_2)\le 40\ln n$ (which holds whp after Phase II, by N.1), we have by (O1), II.1(a) and II.3(a) that
\begin{align*}
|\text{GOOD}_{i}|&\ge |\text{EXP}^{i-1}|-|\text{BAD}_{i}\cup N_{R_2}(\text{BAD}_{i})|-2\\
&\ge |\text{EXP}_1|-(2\ln n+3)\cdot(i-1)-|\text{BAD}_{i}|\cdot (1+40\ln n)-2\ge \left(1-\frac{1}{\ln n}\right)|\text{EXP}_1|.
\end{align*}
We remark that the $-2$ after the first inequality is necessary if case (c) holds (for case (a) one only needs $-1$). Thus, by II.1(c) there is always a set $S_{i}\subseteq \text{GOOD}_{i}$ with the desired properties.

Next we show that $(ii)$ holds whp. By (O2) we know that for every $i\in [t]$ and every $j\in \{1,2\}$ we have $|B^{i}_j|\ge d_{R^i}(v^{i}_{j},V(\mathcal{C}_1))-2$ for some vertex $v^{i}_{j}$ which is at distance at most $2$ from $x_i$ in $R_2$. We claim that whp $d_{R^i}(v^{i}_{j},V(\mathcal{C}_1))\ge (\ln n)^{0.5}$ for every $i\in [t]$ and $j\in \{1,2\}$.

Note that $v^{i}_{j}\notin \text{TINY}$ as otherwise $v^{i}_{j}$ and $x_i$ would be two vertices in $\text{TINY}$ at distance at most $2$ in $R^i$, contradicting (E1). Moreover, recall from II.2(b) that if $v^{i}_{j}\in \text{SMALL}_1$ then $d_{R^i}(v,V(\mathcal{C}_1))\ge (\ln n)^{0.5}$. If $v^{i}_{j}\in \text{EXP}_1\cup V(\mathcal{C}_1)$ then recall from I.3 that $d_{W_1}(v^{i}_j,V(\mathcal{C}_1))=(1-o(1))n$ and note that $d_{W_2}(v^{i}_j,V(\mathcal{C}_1))=d_{W_1}(v^{i}_j,V(\mathcal{C}_1))$ as no edges between the sets $\text{EXP}_1\cup V(\mathcal{C}_1)$ and $V(\mathcal{C}_1)$ were recoloured during Phase II. Thus, using the union bound, Lemma \ref{Che} and II.3(a) we see that the probability that $d_{R^{i}}(v^{i}_j,V(\mathcal{C}_1))< (\ln n)^{0.5}$ for some $i\in [t]$ and $j\in\{1,2\}$ is at most
\[2t\cdot \Pr\left[\text{Bin}((1-o(1))n,p)< (\ln n)^{0.5}\right]\le 2n^{0.04}\cdot e^{-\left(\frac{1}{2}-o(1)\right)\ln n}=o(1)\,,\]
as claimed. Hence, whp $|B^{i}_j|\ge (\ln n)^{0.5}-2$ for every $i\in [t]$ and $j\in \{1,2\}$. We assume this hereafter.

Note that for $i\neq i'$ we have $(B^{i}_1\cup B^{i}_2)\cap (B^{i'}_{1}\cup B^{i'}_2)=\emptyset$ since otherwise (O2) would imply that there is a path in $R^{i}$ of length at most $8$ between $x_i$ and $x_{i'}$, contradicting (E1). Thus, by (O3) and I.4, we see that $d_{W^{i-1}}(v,S_{i})=d_{W_1}(v,S_{i})=(1-o(1))|S_i|$ for every $v\in B^{i}_{1}\cup B^{i}_{2}$ and every $i\in [t]$.

Now, for each $i\in [t]$ and $j\in\{1,2\}$, let $C^{i}_j\subseteq B^{i}_j$ be a subset of size at least $\left(\frac{1}{2}-o(1)\right)|B^{i}_j|\ge \frac{1}{3}(\ln n)^{0.5}$ such that $C^{i}_{1}\cap C^{i}_{2}=\emptyset$.
We then see that for any $i\in [t]$ and $j\in\{1,2\}$ the probability that there is no edge in $R^i$ between $C^{i}_j$ and $S_{i}$ is at most:
\[\Pr\left[\text{Bin}\left((1-o(1))|S_{i}|\cdot |C^{i}_{j}|,p\right)=0\right]= (1-p)^{(1-o(1))|S_{i}|\cdot |C^{i}_{j}|}\le e^{-\frac{\ln n}{n}\cdot \frac{1-o(1)}{240} n(\ln n)^{-0.45}\cdot \frac{1}{3}(\ln n)^{0.5}}\le \frac{1}{n}.\]
Using II.3(a) and the union bound we see that the probability that for some $i\in [t]$ and $j\in\{1,2\}$ there is no edge in $R^{i}$ between $B^{i}_{j}$ and $S_{i}$  is at most $2t\cdot \frac{1}{n} = o(1)$. This shows that $(ii)$ holds whp.
\end{proof}

Assuming that the properties of Claim \ref{claim:PhaseIIIterminates} hold, denoting by $R_3$, $W_3$ and $B_3$ the sets of red, white and blue edges at the end of this phase's algorithm, we show that whp the following technical conditions hold:
\begin{enumerate}
\item[III.1] Properties of $\text{EXP}_2$:
\begin{enumerate}
\item $|\text{EXP}_2|\ge\left(1-\frac{1}{(\ln n)^2}\right)|\mathcal U|\ge (2-o(1))n(\ln n)^{-0.45}$.
\item for every $v\in \text{EXP}_2$ we have $d_{R_3\setminus R_1}(v,\text{EXP}_2)\ge 2(\ln n)^{0.4}$.
\end{enumerate}
\item[III.2] Properties of $\text{SMALL}_2$:
\begin{enumerate}
\item $|\text{SMALL}_2|\le |\text{SMALL}_1|\le 2ne^{-(\ln n)^{0.4}}$.
\item for every $v\in \text{SMALL}_2$ we have $d_{R_3}(v,\mathcal M_2)\ge (\ln n)^{0.5}-400$.
\item for every $v\in \text{SMALL}_2$ we have $N_{W_3}(u,\text{EXP}_2)\neq N_{W_1}(u,\text{EXP}_2)$ for  at most 100 vertices $u\in V(\mathcal C_1)$ which are at distance at most $2$ from $v$ in $R_3$.
\end{enumerate}
\item[III.3] All the paths $(P_j)_{j\in J}$ are paths in the red cycle $\mathcal C_2$.
\item[III.4] In this phase only $o(n)$ edges of $W_2$ are recoloured red.
\end{enumerate}

Note that property III.2(a) is just a consequence of the fact that $\text{SMALL}_2\subseteq \text{SMALL}_1$ together with II.2(a). Moreover, property III.3 follows immediately from the algorithm. Indeed, $E(\mathcal C^{i-1})\setminus E(\mathcal C^{i})$ consists of at most $4$ edges (only $3$ edges if case (a) holds but $4$ if case (b) holds) and, from the definition of $J^{i}$, for each such edge $e$ we remove $j(v)$ from $J^{i-1}$ for one vertex $v\in e$. Thus, it is clear that $P_j$ is still a path in the red cycle $\mathcal C^{i}$ for every $j\in J^{i}$. The next few claims show that the remaining properties all hold whp, assuming that the properties of Claim \ref{claim:PhaseIIIterminates} hold.

\begin{claim}
\label{claim:PropertiesIII.1hold}
Properties III.1(a) and III.1(b) hold whp.
\end{claim}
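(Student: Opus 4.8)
The plan is to bound how much the sets $\text{EXP}^{i}$ shrink over the $t=|\text{TINY}|$ parts of the algorithm, and then to argue that the surviving vertices retain high red degree inside $\text{EXP}_2$. For III.1(a), I would start from observation (O1), which tells us that $|\text{EXP}^{i-1}\setminus \text{EXP}^{i}|\le 2\ln n+3$ for each $i\in[t]$. Summing over $i$ and using $t=|\text{TINY}|\le n^{0.04}$ from II.3(a), the total loss is $|\text{EXP}_1\setminus\text{EXP}_2|\le t(2\ln n+3)\le 3n^{0.04}\ln n = o\!\left(n(\ln n)^{-0.45}/(\ln n)^2\right)$, which is negligible compared to $|\text{EXP}_1|\ge \left(1-\frac{1}{(\ln n)^3}\right)|\mathcal U|$ from II.1(a). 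Hence $|\text{EXP}_2|\ge\left(1-\frac{1}{(\ln n)^2}\right)|\mathcal U|\ge (2-o(1))n(\ln n)^{-0.45}$, where the last bound uses I.1. This part should be essentially bookkeeping.

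For III.1(b) I would argue as follows. Fix $v\in\text{EXP}_2\subseteq\text{EXP}_1$. By II.1(b) we have $d_{R_2\setminus R_1}(v,\text{EXP}_1)\ge 3(\ln n)^{0.4}$, and since $R_2\subseteq R_3$ we only need to control the red neighbours of $v$ inside $\text{EXP}_1$ that fail to survive into $\text{EXP}_2$, i.e.\ those lying in $\text{EXP}_1\setminus\text{EXP}_2$. A crude union bound over parts as above gives $|\text{EXP}_1\setminus\text{EXP}_2|\le 3n^{0.04}\ln n$, but multiplying by $\Delta(R_2)\le 40\ln n$ (from N.1) is not by itself enough unless we argue that a typical $v$ has few such neighbours. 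The cleaner route is to note that the only vertices of $\text{EXP}_1$ removed during Part $i$ are $z_1^{i},z_2^{i}$ together with the at most $2\ln n+1$ vertices of the path $P(u_1^{i},u_2^{i})\subseteq S_i$; moreover $S_i\subseteq\text{GOOD}_i$ excludes $\text{BAD}_i\cup N_{R_2}(\text{BAD}_i)$. A vertex $v$ loses a red neighbour into $\text{EXP}_1\setminus\text{EXP}_2$ only if $v$ is a red neighbour of one of the $O(t\ln n)$ removed vertices, so across all parts the set of such "affected" $v$ has size $O(t\ln n\cdot\Delta(R_2))=O(n^{0.04}(\ln n)^2)$. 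For the remaining (overwhelming majority of) vertices $v\in\text{EXP}_2$, all of their $\ge 3(\ln n)^{0.4}$ red neighbours from II.1(b) still lie in $\text{EXP}_2$, so $d_{R_3\setminus R_1}(v,\text{EXP}_2)\ge 3(\ln n)^{0.4}$.

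The remaining issue is the small set of affected vertices; here I would invoke the definition of $\text{BAD}_i$ and $\text{GOOD}_i$ directly. Since $S_i\subseteq\text{GOOD}_i=\text{EXP}^{i-1}\setminus(\text{BAD}_i\cup N_{R_2}(\text{BAD}_i)\cup\{z_2^{i}\})$, every vertex $w$ that gets removed as part of $P(u_1^{i},u_2^{i})$ satisfies $d_{R_2}(w,\text{EXP}^{i-1})\ge 3(\ln n)^{0.4}$ and also $w\notin N_{R_2}(\text{BAD}_i)$, so none of the red neighbours counted towards a surviving vertex's degree was "wasted" on a low-degree vertex. More importantly, for a surviving $v\in\text{EXP}_2$, the number of its red neighbours in $\text{EXP}_1$ that were ever removed is at most the number of removed vertices adjacent to $v$ in $R_2$, which is at most $\Delta(R_2)$ times the number of parts in which $v$ was "near" a removal. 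Using (O1)'s bound $|\text{EXP}_1\setminus\text{EXP}_2|\le 3n^{0.04}\ln n$ together with property N.1 ($\Delta(R)\le 40\ln n$), one gets that $v$ loses at most $|\text{EXP}_1\setminus\text{EXP}_2|$ neighbours in the worst case; but we need the sharper observation that any fixed $v$ has $d_{R_2}(v,\text{EXP}_1\setminus\text{EXP}_2)\le \min\{\Delta(R_2),|\text{EXP}_1\setminus\text{EXP}_2|\}$, and since $\Delta(R_2)\le 40\ln n$ while we started with $3(\ln n)^{0.4}$ — wait, that is not enough — so the real argument must be that only $O(n^{0.04}(\ln n)^2)$ vertices are affected at all, and a union bound / Markov argument shows whp no vertex of $\text{EXP}_2$ that survives had more than $(\ln n)^{0.4}$ of its red neighbours removed, because each removed vertex has bounded red degree $\le 40\ln n$ and there are only $O(n^{0.04}\ln n)$ of them, so the total number of (vertex, removed-red-neighbour) incidences is $O(n^{0.04}(\ln n)^2)$, hence at most $o(n)$ vertices of $\text{EXP}_2$ lose even a single red neighbour; those that do lose some are simply discarded into a negligible exceptional set, or one checks they still retain $\ge 2(\ln n)^{0.4}$. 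The main obstacle is precisely this last accounting — ensuring that no surviving vertex loses more than a $(\ln n)^{0.4}$ fraction of its $\ge 3(\ln n)^{0.4}$ red neighbours — and the clean way to finish is: since the removed set has size $O(n^{0.04}\ln n)$ and $\Delta(R_2)\le 40\ln n$, the number of vertices with $\ge (\ln n)^{0.4}$ red neighbours removed is at most $O(n^{0.04}\ln n\cdot 40\ln n)/(\ln n)^{0.4}=o(n(\ln n)^{-0.45})$, which we can afford to absorb into the error term of III.1(a) by shrinking $\text{EXP}_2$ accordingly (this is consistent with the $(\ln n)^{-2}$ slack, not the tighter $(\ln n)^{-3}$ of II.1(a)). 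This yields III.1(b) for all $v\in\text{EXP}_2$.
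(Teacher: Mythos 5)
Your argument for III.1(a) is correct and is exactly the paper's: sum (O1) over the $t\le n^{0.04}$ parts and compare with II.1(a) and I.1.

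For III.1(b) there is a genuine gap. Your counting of (vertex, removed-red-neighbour) incidences, $O(n^{0.04}\ln n)\cdot 40\ln n=O(n^{0.04}(\ln n)^2)$, only shows that \emph{few} vertices of $\text{EXP}_2$ lose $\ge(\ln n)^{0.4}$ red neighbours --- it does not show that \emph{none} do, which is what III.1(b) asserts for every $v\in\text{EXP}_2$. Your proposed fix, ``absorb these vertices into the error term of III.1(a) by shrinking $\text{EXP}_2$ accordingly,'' is not available: $\text{EXP}_2:=\text{EXP}^{t}$ is fixed by the algorithm, the later phases require $V=V(\mathcal C_2)\cup\text{EXP}_2\cup\text{SMALL}_2$ to be a partition, and every vertex of $\text{EXP}_2$ must eventually be absorbed into the Hamilton cycle in Phase V; a discarded vertex would end up on neither the cycle $\mathcal C_3$ nor in $\text{EXP}_3$. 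The alternative you mention in passing (``or one checks they still retain $\ge 2(\ln n)^{0.4}$'') is precisely the hard part, and the global incidence count cannot deliver it, since a priori a single unlucky vertex could have all $40\ln n$ of its red neighbours removed.

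The correct argument is deterministic given the whp events N.3, (E1) and II.1(b), and it hinges on the $\text{BAD}_i/\text{GOOD}_i$ mechanism that you mention but then abandon. Fix $v\in\text{EXP}_2$ and let $i$ be the \emph{last} part with $d_{R_2\setminus R_1}(v,\text{EXP}^{i-1})\ge 3(\ln n)^{0.4}$. For every $j>i$ the vertex $v$ lies in $\text{BAD}_j$, and since $\text{EXP}^{j-1}\setminus\text{EXP}^{j}\subseteq V(P(u^{j}_1,u^{j}_2))\cup\{z^{j}_1,z^{j}_2\}$ with $V(P(u^{j}_1,u^{j}_2))\subseteq\text{GOOD}_j$, which is disjoint from $N_{R_2}(\text{BAD}_j)$, the only neighbours $v$ can lose after part $i$ are among the vertices $z^{j}_1,z^{j}_2$; event (E1) (no short red path between two $\text{TINY}$ vertices) caps this total at $2$ over all parts. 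In part $i$ itself the loss is $d_{R_2}(v,V(P(u^{i}_1,u^{i}_2)))$, and if this exceeded $(\ln n)^{0.4}-2$ then $v$ would have $\Omega((\ln n)^{0.4})$ red neighbours on a path of length at most $2\ln n$, producing two cycles of length $O((\ln n)^{0.6})$ sharing only $v$ and contradicting N.3. Altogether $d_{R_3\setminus R_1}(v,\text{EXP}_2)\ge 3(\ln n)^{0.4}-(\ln n)^{0.4}-O(1)\ge 2(\ln n)^{0.4}$, with no exceptional set.
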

\begin{proof}[Proof of Claim \ref{claim:PropertiesIII.1hold}]
It follows from (O1), I.1, II.1(a) and II.3(a) that:
\[|\text{EXP}_2|=|\text{EXP}_1|-\sum_{i=1}^{t}|\text{EXP}^{i-1}\setminus \text{EXP}^{i}|\ge \left(1-\frac{1}{(\ln n)^3}\right)|\mathcal U|-t\cdot(2\ln n+3)\ge \left(1-\frac{1}{(\ln n)^2}\right)|\mathcal U|\]
and so this shows that III.1(a) holds.

We now show that III.1(b) also holds whp. Suppose $d_{R_3\setminus R_1}(v,\text{EXP}_2)<2(\ln n)^{0.4}$ for some $v\in \text{EXP}_2$ and let $i\in [t]$ be the largest possible integer such that $d_{R_2\setminus R_1}(v,\text{EXP}^{i-1})\ge 3(\ln n)^{0.4}$ (this is well defined by II.1(b)). Note that since $R_3[\mathcal U]=R_2[\mathcal U]$ we have
\[d_{R_3\setminus R_1}(v,\text{EXP}_2)=d_{R_2\setminus R_1}(v,\text{EXP}^{i-1})-\sum_{j=i}^{t}d_{R_2\setminus R_1}(v,\text{EXP}^{j-1}\setminus \text{EXP}^{j}).\]
Recall that $\text{EXP}^{j-1}\setminus \text{EXP}^{j}\subseteq V(P(u^{j}_1,u^{j}_2))\cup \{z^{j}_1,z^{j}_2\}\subseteq \text{GOOD}_j\cup \{z^{j}_1,z^{j}_2\}$ where $z^{j}_1$ and $z^{j}_2$ are neighbours of $x_j$ in $R_2$. Moreover, note that by the choice of $i$ we have $v\in \text{BAD}_{j}$ for every $j>i$. Thus, since $\text{GOOD}_{j}\cap N_{R_2}(\text{BAD}_j)=\emptyset$ we get that
\[d_{R_3\setminus R_1}(v,\text{EXP}_2)\ge d_{R_2\setminus R_1}(v,\text{EXP}^{i-1})-d_{R_2}(v,V(P(u^{i}_1,u^{i}_2)))-\sum_{j=i}^{t}d_{R_2}(v,\{z^{j}_1,z^{j}_2\}).\]
Observe now that if $d_{R_2}(v,\{z^{j}_1,z^{j}_2\})>0$ and $d_{R_2}(v,\{z^{j'}_1,z^{j'}_2\})>0$ for $j\neq j'$ then there would exist a path in $R_3$ of length at most $4$ between $x_j$ and $x_{j'}$, contradicting (E1). Thus, we can conclude that
\[d_{R_3\setminus R_1}(v,\text{EXP}_2)\ge d_{R_2\setminus R_1}(v,\text{EXP}^{i-1})-d_{R_2}(v,V(P(u^{i}_1,u^{i}_2)))-2\ge 3(\ln n)^{0.4}-d_{R_2}(v,V(P(u^{i}_1,u^{i}_2)))-2.\]
Since we assumed that $d_{R_3\setminus R_1}(v,\text{EXP}_2)<2(\ln n)^{0.4}$, we must have that
\[d_{R_2}(v,V(P(u^{i}_1,u^{i}_2)))>(\ln n)^{0.4}-2.\]
It is easy to see that this implies the existence of two cycles of length $O((\ln n)^{0.6})$ sharing only the vertex $v$ in the graph $R_2$ since the red path $P(u^{i}_1,u^{i}_2)$ has length at most $2\ln n$. However, if N.3 holds then this does not happen. Since N.3 holds whp we conclude that III.1(b) holds whp as desired.
\end{proof}
\begin{claim}
\label{claim:PropertiesIII.2hold}
Properties III.2(b) and III.2(c) hold whp.
\end{claim}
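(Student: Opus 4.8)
The plan is to exploit the fact that the whole of Phase~III only recolours edges and reroutes cycles in ``localised'' neighbourhoods of the vertices $x_1,\dots,x_t$ of $\text{TINY}$, combined with the far‑apartness of $\text{TINY}$ recorded in II.3(b) (event (E1)). The first step I would carry out, and which is used for both properties, is the following structural observation: \emph{for every $v\in\text{SMALL}_2$ there is at most one index $i=i(v)\in[t]$ with $v$ at red distance at most $6$ from $x_i$ in $R_3$.} Indeed, by (O2) every vertex of $B^j_1\cup B^j_2$ is at red distance at most $4$ from $x_j$, and more generally every inner vertex of a path $P_\ell$ broken during Part~$j$ is at red distance at most $99+4$ from $x_j$ (walk along $P_\ell\subseteq\mathcal C_1\subseteq R_3$ to the relevant special vertex $z^j_1,y^j_1$ or $y^j_2$, then use (O2)); so if $v$ were close to two such $x_i\ne x_{i'}$ we would obtain a red path of size at most $13\le 1000$ between $x_i$ and $x_{i'}$, contradicting (E1).

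For III.2(b) I would use that $R_2\subseteq R_3$ and $\mathcal M_2\subseteq\mathcal M_1$, so by II.2(b)
\[
d_{R_3}(v,\mathcal M_2)\ \ge\ d_{R_2}(v,\mathcal M_1)-d_{R_3}(v,\mathcal M_1\setminus\mathcal M_2)\ \ge\ (\ln n)^{0.5}-d_{R_3}(v,\mathcal M_1\setminus\mathcal M_2),
\]
and it remains to bound $d_{R_3}(v,\mathcal M_1\setminus\mathcal M_2)$ by $400$. But $\mathcal M_1\setminus\mathcal M_2$ is exactly the set of inner vertices of the paths $P_\ell$ with $\ell\in[m]\setminus J$, and each Part of Phase~III deletes at most four indices from the running set $J$; hence a red neighbour of $v$ inside $\mathcal M_1\setminus\mathcal M_2$ lies in some $P_\ell$ broken during a Part~$j$, and by the previous paragraph all such $j$ coincide with $i(v)$. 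Therefore $d_{R_3}(v,\mathcal M_1\setminus\mathcal M_2)$ is at most the number of inner vertices of the (at most four) size‑$100$ paths broken during Part~$i(v)$, i.e.\ at most $4\cdot 98<400$, as wanted.

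For III.2(c) I would first note that by (O3) (applied with index $t+1$, using $W_3=W^t$ and $\text{EXP}_2=\text{EXP}^t$) any $u\in V(\mathcal C_1)$ with $N_{W_3}(u,\text{EXP}_2)\ne N_{W_1}(u,\text{EXP}_2)$ lies in $\bigcup_{j=1}^t(B^j_1\cup B^j_2)$; and if such a $u$ is additionally at red distance at most $2$ from $v$ in $R_3$, then by (O2) $v$ is at red distance at most $6$ from the corresponding $x_j$, so $j=i(v)$ by the first paragraph. Thus it suffices to show that at most $100$ vertices of $B^i_1\cup B^i_2$ (with $i=i(v)$) lie at red distance at most $2$ from $v$, and I would do this by a short case analysis built on the explicit red path of length $\le4$ from each $u\in B^i_1$ to $x_i$ provided by (O2), namely $x_i\to z^i_1\to s^+(z^i_1)\to s^+(u)\to u$ (and its analogue through $z^i_2$ for $u\in B^i_2$). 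If $u$ is a red neighbour of $v$, prepending the edge $vu$ gives a red trail of length $\le 5$ from $v$ to $x_i$ with $u$ as its second vertex, so distinct such $u$ give distinct trails and N.4 caps their number; if $u$ is at distance exactly $2$ from $v$ through a common red neighbour $a$, inserting $a$ gives a red trail of length $\le 6$ from $v$ to one of $x_i,z^i_1,s^+(z^i_1)$ in which $u$ (or $s^+(u)$) is pinned as a fixed vertex, so N.4 again caps the count; and in the single remaining degenerate case $a=s^+(z^i_1)$ each such $u$ produces a red triangle $\{s^+(z^i_1),u,s^+(u)\}$, so N.3 (no two cycles of length $\le(\ln n)^{0.9}$ meeting in exactly one vertex) forces all these triangles to share an edge at $s^+(z^i_1)$, which again bounds the count by a constant. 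Summing the handful of subcases and the symmetric treatment of $B^i_2$ keeps the total comfortably below $100$.

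Everything above is a deterministic consequence of the events N.1--N.4 and of the whp‑properties established earlier (in particular II.2(b), II.3(a), II.3(b)$=$(E1) and (O1)--(O3)), so Claim \ref{claim:PropertiesIII.2hold} holds whp. I expect the main obstacle to be the case analysis for III.2(c): the naive route of contracting a short red walk from $v$ to $x_i$ down to a short red trail destroys the link to $u$, so one must route the trails deliberately so that $u$ — or a vertex determined by it — stays pinned as a fixed second or third vertex, and then dispose by hand of the few degenerate configurations, most notably the triangle configuration, which is exactly the reason N.3 is among the events the algorithm monitors.
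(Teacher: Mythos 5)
Your proposal is correct and follows essentially the same route as the paper: both arguments use (E1) to localise everything to a single Part $i(v)$, bound the loss $d_{R_3}(v,\mathcal M_1\setminus\mathcal M_2)$ by $4\cdot 98<400$ for III.2(b), and reduce III.2(c) via (O2)/(O3) to counting the vertices of $B^{i}_1\cup B^{i}_2$ at red distance at most $2$ from $v$, which N.4 (and, in your degenerate subcases, N.3) caps -- your case analysis merely fills in a step the paper states tersely (``one can find at least four trails of length at most $6$''). The one slip is that for III.2(b) your uniqueness observation must be proved with threshold about $1+(99+4)=104$ rather than $6$, since a red neighbour of $v$ inside a broken $P_\ell$ only places $v$ at red distance up to roughly $104$ from $x_{i}$; the identical (E1) argument still applies because $2\cdot 104+1\le 1000$ (the paper uses the constant $207$ at this point).
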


\begin{proof}[Proof of Claim \ref{claim:PropertiesIII.2hold}]
First we show that whp III.2(b) holds. Note that for every $v\in \text{SMALL}_2$ we have
\[d_{R_3}(v,\mathcal M_2)\ge d_{R_3}(v,\mathcal M_1)- \sum_{i=1}^{t}\sum_{j\in J^{i-1}\setminus J^{i}}d_{R_3}(v,V(P_j))\]
since $\mathcal M_1\setminus \left(\bigcup_{i=1}^{t}\bigcup_{j\in J^{i-1}\setminus J^{i}}V(P_j)\right)\subseteq \mathcal M_2$. Also, since $|J^{i-1}\setminus J^{i}|\le 4$ and since $|V(P_j)|=100$ we have that for every $i\in [t]$
\[\sum_{j\in J^{i-1}\setminus J^{i}}d_{R_3}(v,V(P_j))\le 400.\]
Observe now that if for $i\neq i'$, $j\in J^{i-1}\setminus J^{i}$ and $j'\in J^{i'-1}\setminus J^{i'}$ we have $d_{R_3}(v,V(P_j))>0$ and $d_{R_3}(v,V(P_{j'}))>0$ then there is a path in $R_3$ of size at most $2\cdot 102+2+1=207$ between $x_{i}$ and $x_{i'}$. Indeed, this follows from the fact that every vertex in $V(P_j)$ is at distance at most $3+(|V(P_j)|-1)=102$ of $x_i$ in $R^{i}$ and similarly that every vertex in $V(P_{j'})$ is at distance at most $102$ of $x_{i'}$ in $R^{i'}$. However, if the event (E1) holds then there does not exist such a path. Since (E1) holds whp we conclude that whp
\[d_{R_3}(v,\mathcal M_2)\ge d_{R_3}(v,\mathcal M_1)-400,\]
and so by II.2(b) we see that III.2(b) holds.

Now we show that whp III.2(c) holds. Note that by (O3) it is enough
to show that for every $v\in \text{SMALL}_2$ there are at most $100$
vertices $u\in\bigcup_{j=1}^{t}(B^{j}_1\cup B^{j}_2)$ which are at
distance at most $2$ from $v$ in $R_3$. Assume this is not the case
for a vertex $v\in \text{SMALL}_2$. Note first, if for some $j\neq
j'$ the sets $B^{j}_1\cup B^{j}_2$ and $B^{j'}_1\cup B^{j'}_2$ each
contain a vertex which is at distance at most $2$ from $v$ in $R_3$,
then by (O2) it follows that there is a red path from $x_{j}$ to
$x_{j'}$ of length at most $4+2+2+4=12$ in $R_3$. However, this
cannot occur if the event (E1) holds. Since (E1) holds whp we
conclude that whp for every $v\in \text{SMALL}_2$ there is at most
one $j\in [t]$ for which $B^{j}_1\cup B^{j}_2$ contains a vertex
which is at distance at most $2$ from $v$ in $R_3$. Moreover, if for
some $j\in [t]$ the set $B^{j}_1\cup B^{j}_2$ contains at least
$100$ vertices which are at distance at most $2$ from $v$ in $R_3$
then by (O2) one can find at least four trails of length at most
$4+2=6$ between $v$ and $x_j$. However, this cannot occur if N.4
holds. Since N.4 holds whp we conclude that whp III.2(c) holds.
\end{proof}

\begin{claim}
\label{claim:PropertyIII.4holds}
Property III.4 holds whp.
\end{claim}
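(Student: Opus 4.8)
The plan is to reuse the bookkeeping already done for Properties I.2 and II.4: first bound \emph{deterministically} --- on the whp events established so far --- the total number of edges that get recoloured \emph{at all} during Phase III, and then apply Chernoff's inequality to deduce that only $o(n)$ of these are recoloured red.

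For the counting step, I would first note that, by N.1, I.1 and II.3(a) (all whp) together with Claim \ref{claim:PhaseIIIterminates}, Phase III runs $t=|\text{TINY}|\le n^{0.04}$ parts, and in each part the algorithm recolours edges in a bounded number of batches: at most three in case (a) --- one between a pair of vertices and $V(\mathcal C_1)$, and two between a set $B^{i}_j$ and a set $S_i$ --- and the analysis of cases (b) and (c) produces batches of the same shape. The first type of batch recolours at most $2|V(\mathcal C_1)|\le 2n$ edges. For the batches between $B^{i}_j$ and $S_i$, observation (O2) of this phase together with N.1 gives $|B^{i}_j|\le\Delta(R)\le 40\ln n$, while $S_i\subseteq\mathcal U$ gives $|S_i|\le|\mathcal U|\le 4n(\ln n)^{-0.45}$ by I.1, so each such batch recolours at most $40\ln n\cdot 4n(\ln n)^{-0.45}=160\,n(\ln n)^{0.55}$ edges. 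Summing over the at most $n^{0.04}$ parts, on these whp events the number of edges recoloured during Phase III is at most $M:=n^{0.04}(2n+320\,n(\ln n)^{0.55})\le n^{1.05}$ for large $n$. Now each recoloured edge is recoloured red independently with probability $p$, and $Mp\le 10\,n^{0.05}\ln n$; revealing the relevant $p$-coins one by one in the order the algorithm uses them, the number of edges recoloured red in Phase III is (on the above whp events) stochastically dominated by $\text{Bin}(M,p)$, which by Lemma \ref{Che} is at most $2Mp=o(n)$ with probability $1-o(1)$ since $Mp\to\infty$. As the whp events used all fail with probability $o(1)$, this yields III.4.

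The one point I would be careful about is the stochastic domination, since the set of edges recoloured in Phase III is itself random and depends on the $p$-coins (for instance the bound on $|B^{i}_j|$ leans on N.1, which is affected by Phase III's own randomness). This is handled by the standard truncation trick: run the variant of the algorithm that additionally aborts Phase III once $\lceil n^{1.05}\rceil$ edges have been recoloured in it; in this variant the number of red recolourings is always dominated by $\text{Bin}(\lceil n^{1.05}\rceil,p)$, and on the whp events above the variant never aborts early (the bound $M$ is never reached), hence coincides there with the original algorithm. Everything else is a routine Chernoff and union-bound estimate, entirely analogous to the proof of Claim \ref{claim:PropertyII.4holds}.
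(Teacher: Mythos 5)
Your proposal is correct and follows essentially the same route as the paper: both arguments bound the number of white edges examined in Phase III using (O2) together with N.1 (so $|B^{i}_j|=O(\ln n)$), I.1 (so $|S_i|=O(n(\ln n)^{-0.45})$) and II.3(a) (so $t\le n^{0.04}$), and then apply Chernoff to conclude that only $o(n)$ of them come up red. The only difference is organizational --- the paper runs a conditional Chernoff bound per part and per index $j$ followed by a union bound over the events $E^{i,j}_1,E^{i,j}_2$, whereas you aggregate everything into a single binomial via the truncation/domination device --- and your handling of the adaptivity issue there is sound.
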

\begin{proof}[Proof of Claim \ref{claim:PropertyIII.4holds}]
Recall that in Part $i$ of the algorithm we recolour edges in two situations. In the first situation we recolour edges between two vertices (if case (a) holds then these vertices are $s^{+}(z^{i}_1)$ and $z^{i}_2$) and $V(\mathcal{C}_1)$. In the second situation we recolour edges between vertices in $B^{i}_1\cup B^{i}_2$ and $S_{i}$. From the algorithm it is easy to see that the number of edges in the first situation which are recoloured red is at most $|B^{i}_{1}|+|B^{i}_{2}|+4$. For each $i\in [t]$ and $j\in \{1,2\}$ let $E^{i,j}_1$ be the event that $|B^{i}_j|> (\ln n)^{1.1}$ and let $E^{i,j}_2$ be the event that $e_{R^{i}\setminus R^{i-1}}(B^{i}_{j},S_{i})> 80(\ln n)^{1.65}$. Notice that if none of these events hold then clearly at most $(2(\ln n)^{1.1}+4)\cdot t + 80(\ln n)^{1.65}\cdot 2t=o(n)$ edges are recoloured red in this phase. Note that $\bigwedge_{i\in[t],j\in\{1,2\}}\overline{E^{i,j}_1}$ holds whp by (O2) and N.1. Moreover, using the fact that $S_i\subseteq \mathcal U$ and that $|\mathcal U|\le 4n(\ln n)^{-0.45}$ by I.1, we have by Chernoff (Lemma \ref{Che}) that
\begin{align*}
\Pr\left[E^{i,j}_2\wedge \overline{E^{i,j}_1} \right] \le &\Pr\left[E^{i,j}_2 \;|\; \overline{E^{i,j}_1}\right]\le \Pr\left[\text{Bin}\left((\ln n)^{1.1}\cdot 4n(\ln n)^{-0.45},p\right)> 80(\ln n)^{1.65}\right]\\
\le &\Pr\left[\text{Bin}\left(4n(\ln n)^{0.65},\frac{10\ln n}{n}\right)> 80(\ln n)^{1.65}\right]
\le e^{-\frac{40}{3}(\ln n)^{1.65}}.
\end{align*}
Thus, using the union bound and II.3(a), we see that the probability that the number of edges recoloured red during this phase is larger than $(2(\ln n)^{1.1}+4)\cdot t + 80(\ln n)^{1.65}\cdot 2t$ is at most
\[\Pr\left[\bigvee_{i\in[t],j\in\{1,2\}}E^{i,j}_1\right]+ \sum_{i\in [t],j\in\{1,2\}}\Pr\left[E^{i,j}_2\wedge \overline{E^{i,j}_1}\right]\le o(1)+2t\cdot  e^{-\frac{40}{3}(\ln n)^{1.65}}=o(1)\]
This, together with II.3(a), shows that whp III.4 holds.
\end{proof}

We have shown that whp at the end of this phase all of the properties III.1-III.4 hold. We shall assume henceforth that all these properties hold for the sets $R_3$, $W_3$ and $B_3$.

\subsection{Phase IV}
In this phase, we want to recolour some edges in $W_3$ in order to
find a red cycle $\mathcal C_3$ containing $\text{SMALL}_2\cup
V(\mathcal C_2)$ in a such a way that $\text{EXP}_3=V\setminus
V(\mathcal C_3)$ is a ``good expander" as described in the outline.
The algorithm for this phase is similar in spirit to the one of
Phase III. It is divided into three stages. In Stage 1 we define
notation and sets that will be useful for us throughout the
algorithm. Stage 2 is the main stage of the algorithm and is divided
into $s=|\text{SMALL}_2|$ parts. For each $i\in [s]$ we denote by
$R^{i}$ and $W^{i}$, respectively, the sets of all edges which are
coloured red and white at the end of Part $i$. During Part $i$, we
recolour ``some" edges in $W^{i-1}$ in order to obtain a red cycle
$\mathcal{C}^{i}$ (i.e. consisting solely of edges in $R^{i}$) such
that $V(\mathcal C^{i})$ contains $V(\mathcal{C}_2)$, $i$ vertices
from $\text{SMALL}_2$ and the vertex set of a ``small" red path in
$\text{EXP}_2$. Finally, in Stage $3$ we define sets needed for
later phases. The algorithm for this phase is as follows:
\medskip

{\bf Algorithm:} Fix an enumeration $y_1,y_2,\ldots, y_s$ of the vertices in $\text{SMALL}_2$, where $s=|\text{SMALL}_2|$, set $\mathcal{C}^{0}:=\mathcal{C}_2$, $\text{EXP}^{0}:= \text{EXP}_2$, $R^{0}:=R_3$ and $W^{0}:=W_3$ and for each $j\in J$ let $M_j$ denote the set of inner vertices of the path $P_j$.

Let $(J_i)_{i\in [s]}$ be disjoint subsets of $J$ of size $10^3(\ln n)^{0.45}$ such that $d_{R_3}(y_i,M_j)>0$ for every $j\in J_i$. Claim \ref{claim:PhaseIVterminates} ensures that such sets exist and we assume that henceforth.  For each $j\in J_i$ let $m_j\in M_j$ be such that $y_im_j\in R_3$. For $i=1,2,\ldots,s$ execute the following routine:

{\bf Routine:} Fix a cyclic orientation of the vertices in the red cycle $\mathcal C^{i-1}$ and denote for each $v\in V(\mathcal C^{i-1})$ by $s^{+}(v)$ and $s^{-}(v)$ the successor and predecessor of $v$ in the cycle $\mathcal C^{i-1}$ according to this orientation, respectively.

Set $\text{BAD}_{i}:=\{v\in \text{EXP}^{i-1}:d_{R_3}(v,\text{EXP}^{i-1})<2(\ln n)^{0.4}\}$, $\text{GOOD}_{i}:=\text{EXP}^{i-1}\setminus(\text{BAD}_{i}\cup N_{R_3}(\text{BAD}_{i}))$ and let $S_{i}\subseteq \text{GOOD}_{i}$ be a subset of size at least $\frac{1}{240}n(\ln n)^{-0.45}$ such that $R^{i-1}[S_{i}]$
is a connected graph of diameter at most $2\ln n$. Claim \ref{claim:PhaseIVterminates} ensures that whp such a set $S_i$ always exists and we assume that henceforth.

Define $A_i:= \{s^{+}(m_j):j\in J_i\}$ and recolour all the edges in $W^{i-1}$ between the set $A_i$ and
$S_{i}$. Letting $R^{i}$ and $W^{i}$ to be respectively the sets of red and white edges at this point, let $u^{i}_1,u^{i}_2\in S_i$ and $j^{i}_1,j^{i}_2\in J_i$ be distinct indices such that $s^{+}(m_{j^{i}_1})u^{i}_1\in R^{i}$ and $s^{+}(m_{j^{i}_2})u^{i}_2\in R^{i}$. Claim \ref{claim:PhaseIVterminates} ensures that whp such vertices $u^{i}_1,u^{i}_2$ and distinct indices $j^{i}_1,j^{i}_2$ always exist and we assume that hereafter.

Set
\[Q^i_1:=s^{+}(m_{j^{i}_1})\ldots s^{-}(m_{j^{i}_2})m_{j^{i}_2}y_im_{j^{i}_1}s^{-}(m_{j^{i}_1})\ldots s^{+}(m_{j^{i}_2})\]
to be the red path from $s^{+}(m_{j^{i}_1})$ to $s^{+}(m_{j^{i}_2})$ which contains
$y_i$ and is obtained from $\mathcal C^{i-1}$ by deleting the red edges $m_{j^{i}_1}s^{+}(m_{j^{i}_1})$ and
$m_{j^{i}_2}s^{+}(m_{j^{i}_2})$ and adding the red edges $m_{j^{i}_2}y_i$ and $y_im_{j^{i}_1}$. Let $Q^i_2$ be a path inside $S_{i}$ from $u^{i}_1$ to $u^{i}_2$ of length at most $2\ln n$ consisting solely of edges in $R^{i}$ (such a path exists by the choice of $S_{i}$). Set $\mathcal C^{i}$ to be the red cycle formed by joining the red paths $Q^i_1$ and $Q^i_2$ with the red edges $s^{+}(m_{j^{i}_1})u^{i}_1$ and $s^{+}(m_{j^{i}_2})u^{i}_2$. To end the routine, set $\text{EXP}^{i}:=\text{EXP}^{i-1}\setminus V(\mathcal C^{i})$.
\vspace{-8pt}
\begin{flushright}
\textbf{(End of Routine)}
\end{flushright}
\vspace{-4pt}
To finish the algorithm set $\text{EXP}_3:=\text{EXP}^{s}$ and $\mathcal{C}_3:=\mathcal C^{s}$.
\vspace{-14pt}
\begin{flushright}
\textbf{(End of Algorithm)}
\end{flushright}
\vspace{-4pt}

We make a few observations about the procedure above which will be useful for later:
\begin{enumerate}[(O1)]
\item For every $i\in [s]$ we have $|\text{EXP}^{i-1}\setminus \text{EXP}^{i}|\le 2\ln n + 1$ since $\text{EXP}^{i-1}\setminus \text{EXP}^{i}=V(Q^i_2)$ which has size at most $2\ln n +1$. Moreover, since by III.1(b) we have $d_{R_3}(v,\text{EXP}_2)\ge 2(\ln n)^{0.4}$ for every $v\in \text{EXP}_2$, every vertex in $\text{BAD}_{i}$ must have at least one red neighbour in $\text{EXP}_2\setminus \text{EXP}^{i-1}$. Thus, we have $|\text{BAD}_{i}|\le |\text{EXP}_2\setminus \text{EXP}^{i-1}|\cdot \Delta(R_3)\le (2\ln n+1)(i-1)\cdot \Delta(R_3)$.

\item For every $i\in [s+1]$ we have $N_{W^{i-1}}(v,\text{EXP}^{i-1})=N_{W_3}(v,\text{EXP}^{i-1})$ provided $v\in V(\mathcal C_2)\setminus \left(\bigcup_{k=1}^{i-1}A_k\right)$. Indeed, this holds since before Part $i$ of the routine we only recolour edges between the sets $A_{k}$ and $\text{EXP}^{k-1}$ for $k\in [i-1]$.

\item For every $i\in [s+1]$ and every $j\in J\setminus \left(\bigcup_{k=1}^{i-1}\{j^{k}_1,j^{k}_2\}\right)$ the path $P_j$ is still a path in the red cycle $\mathcal C^{i-1}$. Indeed, this follows by induction on $i$ using the facts that $m_{j^{k}_1}\in M_{j^{k}_1}$, $m_{j^{k}_2}\in M_{j^{k}_2}$ and that the sets $J_{k}$ are disjoint for $k\in [i-1]$.

\item The sets $(A_i)_{i\in [s]}$ are disjoint and $N_{W^{i-1}}(v,\text{EXP}^{i-1})=N_{W_3}(v,\text{EXP}^{i-1})$ for every $v\in A_i$ and every $i\in [s]$. Indeed, (O3) together with the fact that $m_{j}\in M_{j}$ for every $j\in J_i$ ensures that $A_i\subseteq \bigcup_{j\in J_i} V(P_j)$ for every $i\in [s]$. Since the sets $(J_i)_{i\in [s]}$ are disjoint, the first observation follows. The second observation follows from the first one together with (O2).

\end{enumerate}

In the next claim we prove that some properties which are assumed in the algorithm hold whp.
\begin{claim}
\label{claim:PhaseIVterminates}
All of the following properties hold whp:
\begin{enumerate}[$(i)$]
\item There exist disjoint subsets $(J_i)_{i\in [s]}$ of $J$ of size $10^3(\ln n)^{0.45}$ such that $d_{R_3}(y_i,M_j)>0$ for every $j\in J_i$.
\item For every $i\in [s]$ there exists $S_i\subseteq \text{GOOD}_i$ of size at least $\frac{1}{240}n(\ln n)^{-0.45}$ such that $R^{i-1}[S_i]$ is a connected graph of diameter at most $2\ln n$.
\item For every $i\in [s]$, after recolouring the edges in $W^{i-1}$ between the sets $A_i$ and $S_i$, there exist $u^{i}_1,u^{i}_2\in S_i$ and distinct indices $j^{i}_1,j^{i}_2\in J_i$ such that $s^{+}(m_{j^{i}_1})u^{i}_1\in R^{i}$ and $s^{+}(m_{j^{i}_2})u^{i}_2\in R^{i}$.
\end{enumerate}
\end{claim}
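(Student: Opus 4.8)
The plan is to treat the three parts separately, exploiting throughout the fact that $s:=|\text{SMALL}_2|$ is sub-polynomially small (at most $2ne^{-(\ln n)^{0.4}}$ by III.2(a)), so that everything touched in this phase is negligible and the required substructures survive.

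For $(i)$ I would apply the star-matching Lemma~\ref{lemma: star matching} with $k=10^3(\ln n)^{0.45}$ to the bipartite graph $\Gamma$ with parts $\text{SMALL}_2$ and $J$, joining $y_i$ to $j$ whenever $d_{R_3}(y_i,M_j)>0$. Thus the task reduces to checking $|N_\Gamma(I)|\ge k|I|$ for every $I\subseteq\text{SMALL}_2$. If this failed for some $I$, set $X:=\{y_i:i\in I\}\cup\bigcup_{j\in N_\Gamma(I)}M_j$. By III.2(b) each $y_i$ sends at least $(\ln n)^{0.5}-400$ red edges into $\mathcal M_2$, and since the $M_j$ are disjoint of size $98$ these all land in $\bigcup_{j\in N_\Gamma(I)}M_j$; hence $e_{R_3}(X)\ge\frac12(\ln n)^{0.5}|I|$ while $|X|\le|I|+98|N_\Gamma(I)|\le 10^5(\ln n)^{0.45}|I|$, so $e_{R_3}(X)\ge\frac{(\ln n)^{0.05}}{2\cdot10^5}|X|$. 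Viewing the red graph as a subgraph of $G\sim\mathcal G(n,p)$ and invoking $(P3)$ of Lemma~\ref{lemma:propertiesofGnp} with $C=\frac{(\ln n)^{0.05}}{2\cdot10^5}$ — which satisfies $C\ge 6\ln(np\ln n)$ because $np\le 10\ln n$, and for which $|X|\le 2\cdot10^5 n(\ln n)^{0.45}e^{-(\ln n)^{0.4}}\le\frac{C}{2p}$ since $|I|\le s$ is tiny — gives $e_{R_3}(X)<C|X|$, a contradiction. So the matching condition holds and Lemma~\ref{lemma: star matching} produces the disjoint $(J_i)_{i\in[s]}$.

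For $(ii)$ I would argue exactly as in part $(i)$ of Claim~\ref{claim:PhaseIIIterminates}. Using $(O1)$, N.1 and III.1(b) one bounds $|\text{EXP}_2\setminus\text{EXP}^{i-1}|$, $|\text{BAD}_i|$ and $|N_{R_3}(\text{BAD}_i)|$ all by $o(n(\ln n)^{-1.45})$ (here the key input is again that $s\le 2ne^{-(\ln n)^{0.4}}$ decays faster than any power of $\ln n$), and combining with III.1(a) and II.1(a) yields $|\text{GOOD}_i|\ge\bigl(1-\frac1{\ln n}\bigr)|\text{EXP}_1|$. Then the second part of property II.1(c), applied with $U=\text{GOOD}_i$, gives a subset $S_i\subseteq\text{GOOD}_i$ of size at least $\frac1{240}n(\ln n)^{-0.45}$ with $R_2[S_i]$ of diameter at most $2\ln n$; since $R_2\subseteq R^{i-1}$, the graph $R^{i-1}[S_i]$ is connected of diameter at most $2\ln n$ as well.

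For $(iii)$ note first that the $|A_i|=|J_i|=10^3(\ln n)^{0.45}$ vertices $s^{+}(m_j)$, $j\in J_i$, are distinct (they lie in distinct paths $P_j$, which are subpaths of $\mathcal C^{i-1}$ by $(O3)$) and each is at distance at most $2$ from $y_i$ in $R_3$. Hence by III.2(c) all but at most $100$ of them — a set $A_i'$ with $|A_i'|\ge 10^3(\ln n)^{0.45}-100$ — satisfy $N_{W_3}(v,\text{EXP}_2)=N_{W_1}(v,\text{EXP}_2)$, and combining with $(O4)$ and $S_i\subseteq\text{EXP}^{i-1}\subseteq\text{EXP}_2$ gives $N_{W^{i-1}}(v,S_i)=N_{W_1}(v,S_i)$, which has size $(1-o(1))|S_i|$ by I.4. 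After recolouring the edges of $W^{i-1}$ between $A_i$ and $S_i$, each $v\in A_i'$ independently fails to send a red edge into $S_i$ with probability at most $(1-p)^{(1-o(1))|S_i|}\le e^{-\frac{1-o(1)}{240}(\ln n)^{0.55}}$; since these events are independent over $v\in A_i'$, the probability that fewer than two vertices of $A_i'$ reach $S_i$ is at most $(|A_i'|+1)\,e^{-\frac{1-o(1)}{240}(\ln n)^{0.55}(|A_i'|-1)}=n^{-4+o(1)}$, and a union bound over $i\in[s]$ with $s<n$ finishes it; the two successful vertices correspond to two distinct indices $j^{i}_1,j^{i}_2\in J_i$ and their red endpoints $u^{i}_1,u^{i}_2\in S_i$.

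The main obstacle is part $(i)$: one has to convert the failure of the matching condition into a genuinely dense induced subgraph of the red graph and then match the resulting edge density against the size threshold in $(P3)$ of Lemma~\ref{lemma:propertiesofGnp}; this works only because $|\text{SMALL}_2|$ is sub-polynomially small, and keeping the $(\ln n)$-powers and absolute constants consistent is the delicate point. Part $(iii)$ is routine probabilistically but likewise depends on the concrete value $10^3$ for $|J_i|$, which is exactly what pushes the per-$i$ failure exponent above $1$ so that the union bound over the $s$ parts goes through.
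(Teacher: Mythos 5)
Your proof is correct and follows essentially the same route as the paper's: part $(i)$ via the star-matching Lemma~\ref{lemma: star matching} combined with the density bound $(P3)$ of Lemma~\ref{lemma:propertiesofGnp}, part $(ii)$ via II.1(c) exactly as in Phase III, and part $(iii)$ via a first-moment estimate on red edges between $A_i$ and $S_i$ using III.2(c), (O4) and I.4. The only cosmetic differences are that in $(i)$ the paper derives the contradiction by lower-bounding $|X\cup Y|$ and hence $|\text{SMALL}_2|$ against III.2(a) rather than checking $|X|\le C/(2p)$ directly, and in $(iii)$ it splits $A_i\setminus D_i$ into two disjoint subsets and shows each sends a red edge into $S_i$, rather than counting two successes among independent trials.
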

\begin{proof}[Proof of Claim \ref{claim:PhaseIVterminates}]
We show first that whp $(i)$ holds. Let $G_{\text{aux}}$ be the bipartite graph with parts $[s]$ and $J$ and edge set $\{ij: i\in [s], j\in J, d_{R_3}(y_i,M_j)>0\}$. We want to show that whp there are disjoint subsets $(J_i)_{i\in [s]}$ of $J$ of size $10^3(\ln n)^{0.45}$ such that $J_i\subseteq N_{G_{\text{aux}}}(i)$ for every $i\in [s]$. In light of Lemma \ref{lemma: star matching} it suffices to show that whp $|N_{G_{\text{aux}}}(I)|\ge 10^3(\ln n)^{0.45}|I|$ for every $I\subseteq [s]$. With this in mind, suppose that $I\subseteq [s]$ is such that $|N_{G_{\text{aux}}}(I)|<10^3(\ln n)^{0.45}|I|$, and set $X:=\{y_i:i\in I\}$ and $Y:=\bigcup_{j\in N_{G_{\text{aux}}}(I)}M_j$. Note that we have
\[|X\cup Y|= |X|+98|N_{G_{\text{aux}}}(I)|< 10^5(\ln n)^{0.45}|X|\]
and, using III.2(b) and the fact that $N_{R_3}(X)\cap \mathcal M_2\subseteq Y$, we also have
\[e_{R_3}(X\cup Y)\ge ((\ln n)^{0.5}-400)|X|>\frac{(\ln n)^{0.5}-400}{10^5(\ln n)^{0.45}}|X\cup Y|>\frac{(\ln n)^{0.05}}{10^6}|X\cup Y|.\]
By $(P3)$ of Lemma \ref{lemma:propertiesofGnp} we see then that whp $|X\cup Y|>\frac{1}{10^6}(\ln n)^{0.05}\cdot \frac{1}{20}n(\ln n)^{-1}=\frac{1}{2\times 10^7}n(\ln n)^{-0.95}$. But in that case we have
\[|\text{SMALL}_2|\ge |X|>\frac{|X\cup Y|}{10^5(\ln n)^{0.45}}>\frac{n}{2\times 10^{12}(\ln n)^{1.4}}\]
which contradicts III.2(a). We conclude that whp $(i)$ holds.

Next we show that whp $(ii)$ also holds. Indeed, assuming that $\Delta(R_3)\le 40\ln n$ (which holds whp after Phase III, by N.1), we have by (O1), III.1(a) and III.2(a) that
\begin{align*}
|\text{GOOD}_{i}|&\ge |\text{EXP}^{i-1}|-|\text{BAD}_{i}\cup N_{R_3}(\text{BAD}_{i})|\\
&\ge |\text{EXP}_2|-(2\ln n+1)\cdot(i-1)-|\text{BAD}_{i}|\cdot (1+40\ln n)\ge \left(1-\frac{1}{\ln n}\right)|\text{EXP}_1|.
\end{align*}
Thus, by II.1(c) there is always a set $S_{i}\subseteq \text{GOOD}_{i}$ with the desired properties.

Finally, we show that whp $(iii)$ also holds. By (O4), for every
$i\in [s]$ we have $d_{W^{i-1}}(u,S_{i})=d_{W_3}(u,S_{i})$ for every
$u\in A_i$. Moreover, since every vertex $u\in A_i$ is at distance
at most $2$ in $R_3$ from $y_i\in \text{SMALL}_2$, it follows from
III.2(c) and I.4 that
$d_{W^{i-1}}(u,S_{i})=d_{W_1}(u,S_{i})=(1-o(1))|S_i|$ for all but at
most $100$ vertices $u\in A_{i}$. Let $D_i$ be the set of these
``bad" vertices. For $j\in\{1,2\}$ and $i\in [s]$ let
$B^{i}_{j}\subseteq A_{i}\setminus D_i$ be sets of size at least
$400(\ln n)^{0.45}$ such that $B^{i}_{1}\cap B^{i}_{2}=\emptyset$
(this is possible since $|A_i|=|J_i|=10^3(\ln n)^{0.45}$ and
$|D_i|\le 100$). Thus, for any $i\in [s]$ and $j\in\{1,2\}$ the
probability that there is no edge in $R^i$ between $B^{i}_j$ and
$S_{i}$ is at most:
\[\Pr\left[\text{Bin}\left((1-o(1))|S_{i}|\cdot |B^{i}_{j}|,p\right)=0\right]= (1-p)^{(1-o(1))|S_{i}|\cdot |B^{i}_{j}|}\le e^{-\frac{\ln n}{n}\cdot \frac{1-o(1)}{240}n(\ln n)^{-0.45}\cdot 400(\ln n)^{0.45}}\le \frac{1}{n}.\]
Using III.2(a) and the union bound we see that the probability that for some $i\in [s]$ and $j\in\{1,2\}$ there is no edge in $R^{i}$ between $B^{i}_{j}$ and $S_{i}$  is at most $2s\cdot \frac{1}{n} = o(1)$. Since the sets $B^{i}_1$ and $B^{i}_2$ were chosen to be disjoint, we conclude that whp  $(iii)$ holds. This completes the proof of the claim.
\end{proof}

Assuming that the properties of Claim \ref{claim:PhaseIVterminates} hold, denoting by $R_4$, $W_4$ and $B_4$ the sets of red, white and blue edges at the end of this phase's algorithm, we show that whp the following technical conditions hold:
\begin{enumerate}
\item[IV.1] Properties of $\text{EXP}_3$:
\begin{enumerate}
\item $|\text{EXP}_3|\ge\left(1-\frac{1}{\ln n}\right)|\mathcal U|\ge (2-o(1))n(\ln n)^{-0.45}$.
\item for every $v\in \text{EXP}_3$ we have $d_{R_4\setminus R_1}(v,\text{EXP}_3)\ge (\ln n)^{0.4}$.
\end{enumerate}
\item[IV.2] In this phase only $o(n)$ edges of $W_3$ are recoloured red.
\end{enumerate}

The next few claims show that properties IV.1-IV.2 all hold whp, assuming that the properties of Claim \ref{claim:PhaseIVterminates} hold.

\begin{claim}
\label{claim:PropertyIV.1holds}
Properties IV.1(a) and IV.1(b) hold whp.
\end{claim}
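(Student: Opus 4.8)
The plan is to derive IV.1(a) by a deterministic computation from properties already established, and IV.1(b) by adapting the argument used for III.1(b) --- tracking how the red expander-degree of a vertex of $\text{EXP}_3$ erodes across the $s=|\text{SMALL}_2|$ parts of Phase IV. Note first that throughout Phase IV every recoloured edge joins $A_i\subseteq V(\mathcal C_2)$ to $S_i\subseteq\text{EXP}^{i-1}\subseteq\mathcal U$, so $R_4[\mathcal U]=R_3[\mathcal U]$; in particular $\text{EXP}^{i-1}\setminus\text{EXP}^i=V(Q^i_2)$, $d_{R_4\setminus R_1}(v,\text{EXP}_3)=d_{R_3\setminus R_1}(v,\text{EXP}_3)$ for $v\in\text{EXP}_3$, and the red path $Q^i_2$ together with every red edge inside $\mathcal U$ already lies in $R_3$.

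For IV.1(a): we have $|\text{EXP}_3|=|\text{EXP}_2|-\sum_{i=1}^s|V(Q^i_2)|$, where each $|V(Q^i_2)|\le 2\ln n+1$ by (O1) of this phase, $s\le 2ne^{-(\ln n)^{0.4}}$ by III.2(a), and $|\text{EXP}_2|\ge(1-(\ln n)^{-2})|\mathcal U|$ by III.1(a). Since $|\mathcal U|=n-t>2n(\ln n)^{-0.45}$ by I.1 while $s(2\ln n+1)=O(ne^{-(\ln n)^{0.4}}\ln n)=o\bigl((\ln n)^{-2}|\mathcal U|\bigr)$, this gives $|\text{EXP}_3|\ge(1-(\ln n)^{-1})|\mathcal U|\ge(2-o(1))n(\ln n)^{-0.45}$, which is IV.1(a).

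For IV.1(b): if $\text{SMALL}_2=\emptyset$ then $\text{EXP}_3=\text{EXP}_2$ and IV.1(b) follows from III.1(b), so assume $s\ge 1$ and suppose towards a contradiction that $d_{R_3\setminus R_1}(v,\text{EXP}_3)<(\ln n)^{0.4}$ for some $v\in\text{EXP}_3$. By III.1(b) we have $d_{R_3}(v,\text{EXP}^0)\ge d_{R_3\setminus R_1}(v,\text{EXP}_2)\ge 2(\ln n)^{0.4}$; let $i\in[s]$ be the largest index with $d_{R_3}(v,\text{EXP}^{i-1})\ge 2(\ln n)^{0.4}$. For each $j>i$ we then have $d_{R_3}(v,\text{EXP}^{j-1})<2(\ln n)^{0.4}$, so $v\in\text{BAD}_j$; since $V(Q^j_2)\subseteq S_j\subseteq\text{GOOD}_j=\text{EXP}^{j-1}\setminus(\text{BAD}_j\cup N_{R_3}(\text{BAD}_j))$, no vertex of $Q^j_2$ is a red neighbour of $v$, i.e. $d_{R_3}(v,V(Q^j_2))=0$. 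As $\text{EXP}^{i-1}\setminus\text{EXP}_3=\bigsqcup_{j=i}^s V(Q^j_2)$, we obtain $d_{R_3}(v,\text{EXP}_3)=d_{R_3}(v,\text{EXP}^{i-1})-d_{R_3}(v,V(Q^i_2))\ge 2(\ln n)^{0.4}-d_{R_3}(v,V(Q^i_2))$, and since $R_1$ is a path together with one chord whose endpoints lie on $\mathcal C_1$ (so $d_{R_1}(v)\le 2$ for $v\in\mathcal U$), this yields $d_{R_3\setminus R_1}(v,\text{EXP}_3)\ge 2(\ln n)^{0.4}-d_{R_3}(v,V(Q^i_2))-2$. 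If $d_{R_3}(v,V(Q^i_2))\le(\ln n)^{0.4}-2$ the right-hand side is at least $(\ln n)^{0.4}$, contradicting the choice of $v$. Otherwise $v$ has more than $(\ln n)^{0.4}-2$ red neighbours on the red path $Q^i_2$, which has length at most $2\ln n$; since $(\ln n)^{0.4}-2\to\infty$, a pigeonhole argument over the gaps between consecutive such neighbours produces two cycles in $R_3$ through $v$, pairwise sharing only $v$, each of length $O((\ln n)^{0.6})\le(\ln n)^{0.9}$, contradicting N.3. Since III.1(b) and N.3 hold whp, so does IV.1(b).

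The main obstacle is the degree-erosion bookkeeping in IV.1(b): one must observe that once $v$'s red degree into the shrinking set $\text{EXP}^{j-1}$ drops below the $\text{BAD}_j$-threshold it cannot decrease further (every vertex removed afterwards avoids $N_{R_3}(\text{BAD}_j)\supseteq N_{R_3}(v)\cap\text{EXP}^{j-1}$), so the entire loss is concentrated at the single step $i$, on the short path $Q^i_2$ --- and a large loss there contradicts N.3. Two points of care: N.3 only forbids cycles of length $\le(\ln n)^{0.9}$, so one cannot use a single cycle of length $\approx 2\ln n$ and must instead extract two internally disjoint short cycles through $v$ by a counting argument over its many red neighbours along $Q^i_2$; and the $O(1)$ contribution of $R_1$-edges must be absorbed into the constants. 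Everything else reduces to routine estimates built on the properties of Phases I--III.
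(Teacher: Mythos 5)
Your proof follows essentially the same route as the paper's. IV.1(a) is the identical telescoping count via (O1), III.1(a), III.2(a) and I.1, and IV.1(b) is the same argument: pick the last index $i$ at which $v$ still has large red degree into $\text{EXP}^{i-1}$, observe that for $j>i$ the vertex $v$ lies in $\text{BAD}_j$ so that $V(Q^j_2)\subseteq \text{GOOD}_j$ contributes nothing, conclude that the entire loss is concentrated on $Q^i_2$, and derive a contradiction with N.3; your pigeonhole extraction of two cycles of length $O((\ln n)^{0.6})$ meeting only at $v$ is precisely the step the paper leaves as ``easy to see''.

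One inaccuracy to fix: $R_1$ is not ``a path together with one chord'' --- it is the DFS forest produced in Phase I plus the single closing edge $a_ia_j$, and a vertex of $\mathcal U$ (e.g.\ one lying in $C$ or on a tail of the DFS path) can have many forest-children, so $d_{R_1}(v)\le 2$ is unjustified; a priori $d_{R_1}(v)$ could be as large as $\Delta$ of a $\mathcal G(n,pq)$-type graph, i.e.\ of order $(\ln n)^{0.5}$, which is not negligible against the $(\ln n)^{0.4}$ of slack you have. That said, this is the same $d_{R_3}$-versus-$d_{R_3\setminus R_1}$ bookkeeping that the paper itself elides (it chooses $i$ via $d_{R_3\setminus R_1}$ and then asserts membership in $\text{BAD}_j$, a set defined via $d_{R_3}$); your version merely relocates the imprecision from the induction to the final subtraction. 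I would call this a presentational blemish shared with the source rather than a new gap, but the parenthetical justification as written is false and should be replaced by an honest bound on $d_{R_1}(v,\text{EXP}_3)$.
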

\begin{proof}[Proof of Claim \ref{claim:PropertyIV.1holds}]
Note that by (O1), III.1(a) and III.2(a) we have:
\[|\text{EXP}_3|= |\text{EXP}_2|-\sum_{i=1}^{s}|\text{EXP}^{i-1}\setminus \text{EXP}^{i}|\ge \left(1-\frac{1}{(\ln n)^2}\right)|\mathcal U|-s\cdot(2\ln n+1)= \left(1-\frac{1}{\ln n}\right)|\mathcal U|\]
and so this together with I.1 shows that IV.1(a) holds.

We now show that IV.1(b) holds whp. Suppose $d_{R_4\setminus R_1}(v,\text{EXP}_3)<(\ln n)^{0.4}$ for some $v\in \text{EXP}_3$ and let $i\in [t]$ be the largest possible integer such that $d_{R_3\setminus R_1}(v,\text{EXP}^{i-1})\ge 2(\ln n)^{0.4}$ (this is well defined by III.1(b)).
Note that by the choice of $i$ we have $v\in \text{BAD}_{j}$ for every $j>i$. Moreover, since $R_4[\mathcal U]=R_3[\mathcal U]$, $\text{EXP}^{j-1}\setminus \text{EXP}^{j}=V(Q^{j}_2)\subseteq \text{GOOD}_{j}$ and $\text{GOOD}_{j}\cap N_{R_3}(\text{BAD}_j)=\emptyset$ for any $j>i$ we have
\[d_{R_4\setminus R_1}(v,\text{EXP}_3)=d_{R_3\setminus R_1}(v,\text{EXP}^{i-1})-\sum_{j=i}^{s}d_{R_3\setminus R_1}(v,V(Q^{j}_2))\ge 2(\ln n)^{0.4}-d_{R_3}(v,V(Q^{i}_2)).\]
Thus, since we assume that $d_{R_4\setminus R_1}(v,\text{EXP}_3)<(\ln n)^{0.4}$, we must have that $d_{R_3}(v,V(Q^{i}_2))>(\ln n)^{0.4}$. It is easy to see that this implies that there exist two cycles of length $O((\ln n)^{0.6})$ sharing only the vertex $v$ in the graph $R_3$ (since $Q^{i}_2$ is a path in $R_3$ of length at most $2\ln n+1$). However, if N.3 holds then this does not happen. Since N.3 holds whp we conclude that IV.1(b) holds whp as desired.
\end{proof}

\begin{claim}
\label{claim:PropertyIV.2holds}
Property IV.2 holds whp.
\end{claim}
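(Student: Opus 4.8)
The plan is to follow the same strategy as in the proof of Claim~\ref{claim:PropertyIII.4holds}, only the bookkeeping is simpler here: during Phase~IV edges are recoloured in exactly one place, namely the single step inside each Part~$i$ of the routine where we recolour all white edges between $A_i$ and $S_i$ (Stages~1 and~3 merely introduce notation and define sets for later phases, so no recolouring happens there). Hence it suffices to control, for each $i\in[s]$, the number $e_{R^i\setminus R^{i-1}}(A_i,S_i)$ of these edges that become red.

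First I would bound the number of candidate pairs uniformly in $i$. By (O4) we have $A_i\subseteq\bigcup_{j\in J_i}V(P_j)$, and since $A_i=\{s^{+}(m_j):j\in J_i\}$ this gives $|A_i|\le|J_i|=10^3(\ln n)^{0.45}$; on the other hand $S_i\subseteq\text{GOOD}_i\subseteq\text{EXP}_2\subseteq\mathcal U$, so $|S_i|\le|\mathcal U|\le 4n(\ln n)^{-0.45}$ by~I.1. Thus there are at most $|A_i|\,|S_i|\le 4\cdot10^{3}n$ pairs between $A_i$ and $S_i$. Conditioned on the history up to the start of Part~$i$ (which determines $A_i$, $S_i$ and which of these pairs are white), each relevant white pair is recoloured red independently with probability $p\le\frac{10\ln n}{n}$, so $e_{R^i\setminus R^{i-1}}(A_i,S_i)$ is stochastically dominated by $\text{Bin}\!\left(4\cdot10^{3}n,\frac{10\ln n}{n}\right)$. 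Letting $E^{i}$ be the event that $e_{R^i\setminus R^{i-1}}(A_i,S_i)>(\ln n)^{2}$, Lemma~\ref{Che2} yields
\[
\Pr[E^{i}]\le\Pr\!\left[\text{Bin}\!\left(4\cdot10^{3}n,\tfrac{10\ln n}{n}\right)\ge(\ln n)^{2}\right]\le\left(\frac{4e\cdot10^{4}\ln n}{(\ln n)^{2}}\right)^{(\ln n)^{2}}\le e^{-(\ln n)^{2}}
\]
for $n$ large enough. Since this bound is uniform in the history, a union bound over $i\in[s]$ (using the trivial $s\le n$) gives $\Pr\!\left[\bigvee_{i\in[s]}E^{i}\right]\le n\,e^{-(\ln n)^{2}}=o(1)$.

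Finally, conditioned on none of the events $E^{i}$ occurring, the total number of edges recoloured red during Phase~IV is at most $s\cdot(\ln n)^{2}$, and since $s=|\text{SMALL}_2|\le 2ne^{-(\ln n)^{0.4}}$ by~III.2(a) this is $O\!\left(ne^{-(\ln n)^{0.4}}(\ln n)^{2}\right)=o(n)$, which is exactly property~IV.2. I expect no real obstacle here; the only points deserving a line of care are checking that $|A_i|\,|S_i|$ is bounded uniformly over $i$ (which follows from $|J_i|$ being a fixed function of $n$ and $|\mathcal U|$ being controlled by~I.1) and confirming that the single recolouring step inside each Part~$i$ is indeed the only place where edges are recoloured during this phase.
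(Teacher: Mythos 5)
Your proposal is correct and follows essentially the same argument as the paper: both identify the recolouring between $A_i$ and $S_i$ as the only source of red edges in this phase, bound the number of candidate pairs per part by $|A_i|\,|S_i|\le 4\cdot 10^3 n$ using $|J_i|=10^3(\ln n)^{0.45}$ and I.1, and finish with a binomial tail bound together with $s\le 2ne^{-(\ln n)^{0.4}}$ from III.2(a). The only (immaterial) difference is that the paper applies a single Chernoff bound to the aggregate $\mathrm{Bin}\bigl(8\cdot 10^3 n^2 e^{-(\ln n)^{0.4}},p\bigr)$, whereas you bound each part separately and take a union bound.
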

\begin{proof}[Proof of Claim \ref{claim:PropertyIV.2holds}]
Recall from the algorithm that in this phase the only edges recoloured are the ones in $W_3$ between the sets $A_i$ and $S_i$ for each $i\in [s]$. Since for each $i\in [s]$ we have $|A_i|=|J_i|=10^3(\ln n)^{0.45}$ and $|S_i|\le |\mathcal U|\le 4n(\ln n)^{-0.45}$ by I.1, the total number of edges which are recoloured in this phase is at most $s\cdot 4\cdot 10^3n\le 8\cdot 10^3n^{2}e^{-(\ln n)^{0.4}}$, by III.2(a). Thus, using Chernoff (Lemma \ref{Che}) and the fact that $np\le 10\ln n$ we see that the probability that more than $1.6\cdot 10^5 ne^{-(\ln n)^{0.4}}\ln n$ edges in $W_3$ are recoloured red in this phase is at most:
\[\Pr\left[\text{Bin}\left(8\cdot 10^3 n^{2}e^{-(\ln n)^{0.4}},p\right)>1.6\cdot 10^5 ne^{-(\ln n)^{0.4}}\ln n\right]
<e^{-\frac{1}{3}\cdot 8\cdot 10^4ne^{-(\ln n)^{0.4}}\ln n}=o(1).\]
Since $1.6\cdot 10^5 ne^{-(\ln n)^{0.4}}\ln n=o(n)$, this concludes the proof of the claim.
\end{proof}

We have shown that whp at the end of this phase all of the properties IV.1-IV.2 hold. We shall assume henceforth that all these properties hold for the sets $R_4$, $W_4$ and $B_4$.

\subsection{Phase V}

In this phase we create a Hamilton cycle in the red graph by merging the red cycle
$\mathcal C_3$ with the set $\text{EXP}_3$, by recolouring red $o(n)$ edges. To this end, note first that $R_4[\text{EXP}_3]$ has a bounded number of connected components. Indeed, suppose $C$ is a connected component of $R_4[\text{EXP}_3]$. It follows from properties IV.1(a), IV.1(b), i. of II.1(c) and the fact that $R_4[\text{EXP}_3]=R_2[\text{EXP}_3]$, that the set $C$ has size $|C|>\frac{1}{6000}n(\ln n)^{-0.45}$ (since $N_{R_2[S]}(C)\subseteq C$). However, since by I.1, the set $\text{EXP}_3$ has size $|\text{EXP}_3|\le 4n(\ln n)^{-0.45}$, we can conclude that the graph $R_4[\text{EXP}_3]$ has at most $24000$ connected components. By recolouring red less than $24000$ white edges in $\text{EXP}_3$ we can then whp make the red graph in $\text{EXP}_3$ connected.

Afterwards, we consider two adjacent vertices $v,w$ in the red cycle $\mathcal C_3$ which have large white degree onto $\text{EXP}_3$. By recolouring edges between $v,w$ and $\text{EXP}_3$ we can then whp find $x\ne y\in \text{EXP}_3$ such that $vx$ and $wy$ are red edges. Finally, by recolouring red at most $|\text{EXP}_3|$ edges inside $\text{EXP}_3$ we can find a red Hamilton path from $x$ to $y$ inside the set $\text{EXP}_3$. This path together with the red path $\mathcal C_3\setminus\{vw\}$ and the red edges $vx$ and $wy$ then provides the desired red Hamilton cycle in $V$. The algorithm for this phase
is as follows:

\vspace{4pt}
{\bf Algorithm:} Let $C_1,\ldots,C_{\ell}$ be the connected components of the graph $R_4[\text{EXP}_3]$ where, as indicated above, we have $\ell\le 24000$ and $|C_i|>\frac{1}{6000}n(\ln n)^{-0.45}$ for every $i\in [\ell]$. For $1\le i<\ell$ recolour white edges between $C_i$ and $C_{i+1}$ one by one until exactly one edge is recoloured red for each such $i$. Claim \ref{claim:PhaseVterminates} ensures that this happens whp and we assume this henceforth. Note that this procedure makes the red graph in $\text{EXP}_3$ connected.

Next, let $v,w$ be adjacent vertices of $\mathcal C_3$ such that $d_{W_4}(v,\text{EXP}_3)\ge \frac{2}{3}|\text{EXP}_3|$ and $d_{W_4}(w,\text{EXP}_3)\ge \frac{2}{3}|\text{EXP}_3|$. Recolour edges in $W_4$ between $\{v,w\}$ and $\text{EXP}_3$ until there exist two edges $vx$ and $wy$ which are red, where $x,y\in \text{EXP}_3$ are distinct vertices. Claim \ref{claim:PhaseVterminates} ensures that whp such vertices $v$, $w$, $x$ and $y$ exist and we assume this henceforth. Now, set $e=xy$ and run the following routine:

{\bf Routine:} Consider the graph $H$ which is the current red graph on $\text{EXP}_3$. If $H\cup \{e\}$ does not contain a Hamilton cycle that uses the edge $e$, recolour $e$-boosters of $H$ which are white edges one by one until one of them is recoloured red. Repeat this procedure until the graph $H\cup\{e\}$ considered contains a Hamilton cycle which uses the edge $e$.
\vspace{-14pt}
\begin{flushright}
\textbf{(End of Routine)}
\end{flushright}
\vspace{-4pt}

Claim \ref{claim:PhaseVterminates} ensures that whp this procedure is successful and we assume this henceforth. A Hamilton cycle in $H\cup \{e\}$ which uses the edge $e$ then provides a red Hamilton path in $\text{EXP}_3$ from $x$ to $y$. This red path together with the red path $\mathcal C_3\setminus \{vw\}$ and the red edges $vx$ and $wy$ forms the desired red Hamilton cycle in $V$.
\vspace{-10pt}
\begin{flushright}
\textbf{(End of Algorithm)}
\end{flushright}
\vspace{-4pt}

In the next claim we prove that some properties which are assumed in the algorithm hold whp.
\begin{claim}
\label{claim:PhaseVterminates}
All of the following properties hold whp:
\begin{enumerate}[$(i)$]
\item For every $1\le i<\ell$, if we recolour all the white edges between $C_i$ and $C_{i+1}$ then at least one edge is recoloured red.
\item There exist adjacent vertices $v,w$ in $\mathcal C_3$ such that $d_{W_4}(v,\text{EXP}_3)\ge \frac{2}{3}|\text{EXP}_3|$ and $d_{W_4}(w,\text{EXP}_3)\ge \frac{2}{3}|\text{EXP}_3|$. Moreover, after recolouring all the edges in $W_4$ between $\{v,w\}$ and $\text{EXP}_3$, there exist distinct vertices $x,y\in \text{EXP}_3$ such that $vx$ and $wy$ are red edges.
\item At any point during the routine, if $H$ is the graph considered and if we recolour all the $e$-boosters of $H$ which are white edges then there will be one which is recoloured red.
\end{enumerate}
\end{claim}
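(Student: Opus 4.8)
The plan is to treat the three parts separately, in each case conditioning on everything revealed up to the end of Phase~IV (so that $\mathcal C_3$, $\text{EXP}_3$, the components $C_1,\dots,C_\ell$ and all colours of edges touching $\text{EXP}_3$ become deterministic) and then using that the edges we are about to recolour are still white, hence carry fresh independent randomness, each becoming red with probability $p$. The uniform mechanism is the estimate: if a set of $m$ white edges is recoloured, the probability that none turns red is $(1-p)^m\le e^{-pm}$, which is $o(1/n)$ once $m=\omega(n/\ln n)$; as Phase~V performs only $O(n)$ recolouring steps, a union bound then suffices, so in every part the work is to show that the relevant pool of white edges is large. For that I would use two structural facts. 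First, in Phases~III and~IV every recolouring involving a vertex of $\text{EXP}_3$ also involves a vertex lying outside $\text{EXP}_3$ — it is an edge between some $B^i_j$ or $A_i$ (which lies in $V(\mathcal C^{i-1})$, disjoint from $\text{EXP}^{i-1}\supseteq\text{EXP}_3$) and a subset of $\text{EXP}^{i-1}$, or between $\{s^+(z^i_1),z^i_2\}$ and $V(\mathcal C_1)$ — so $R_4[\text{EXP}_3]=R_2[\text{EXP}_3]$ and $W_4[\text{EXP}_3]=W_2[\text{EXP}_3]$. Second, $T_f$ — and hence $\text{TINY}$ and $\text{SMALL}_1$ — is disjoint from $\text{EXP}_3\subseteq\text{EXP}_1$, so the Phase~II recolourings between $T_f$ and $V(\mathcal C_1)$ never touch $\text{EXP}_3$.

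For part $(i)$, the blue edges inside $\text{EXP}_3$ come only from $B_1$ and from the random set $F$ of Phase~II, so property~I.4 together with the bound $|F|\le\frac23|\mathcal U|^2q'$ implied by Claim~\ref{claim:StageIIterminates} give $e_{B_4}(\text{EXP}_3)=O\!\left(n^2(\ln n)^{-0.95}\right)$, which is $o(|C_i||C_{i+1}|)$ since $|C_i|,|C_{i+1}|\ge\frac1{6000}n(\ln n)^{-0.45}$. As $C_i$ and $C_{i+1}$ are distinct red components there is no red edge between them, and the white edges between $C_i$ and $C_{i+1}$ recoloured while processing an earlier index $i'<i$ form a disjoint collection of pairs; hence when index $i$ is processed there remain at least $\frac12|C_i||C_{i+1}|=\Omega\!\left(n^2(\ln n)^{-0.9}\right)$ white edges between $C_i$ and $C_{i+1}$, and the mechanism above with a union bound over the $\le 24000$ values of $i$ gives $(i)$.

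For part $(ii)$, I would let $\mathcal B\subseteq V(\mathcal C_3)$ be the union of $\text{TINY}$, $\text{SMALL}_1$, $\text{EXP}_1\setminus\text{EXP}_3$, all the sets $B^i_j$ of Phase~III and all the sets $A_i$ of Phase~IV. Using properties~II.3(a), II.2(a), IV.1(a), the bound $\Delta(R)\le 40\ln n$ from event~N.1, the fact that each vertex lies in at most one $B^i_j$ (by event (E1) of Phase~III) and in at most one $A_i$ (by observation (O4) of Phase~IV), and that $|\text{TINY}|,|\text{SMALL}_2|$ are $o(n)$, one checks $|\mathcal B|=o(n)$, while $|V(\mathcal C_3)|=n-|\text{EXP}_3|=(1-o(1))n$; thus $\mathcal B$ is not a vertex cover of the cycle $\mathcal C_3$ and some edge $vw$ of $\mathcal C_3$ has both endpoints outside $\mathcal B$. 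For such a vertex $v\in V(\mathcal C_1)$, no edge from $v$ into $\text{EXP}_3$ is recoloured in Phases~II--IV (by the structural facts and $v\notin\mathcal B$), so only the $\le 4n(\ln n)^{-0.5}$ edges of $R_1\cup B_1$ at $v$ are non-white there; since $|\text{EXP}_3|\ge(2-o(1))n(\ln n)^{-0.45}$ this forces $d_{W_4}(v,\text{EXP}_3),d_{W_4}(w,\text{EXP}_3)\ge\frac23|\text{EXP}_3|$. Recolouring all white edges between $\{v,w\}$ and $\text{EXP}_3$, Chernoff's inequality (the relevant mean being $\ge\frac23|\text{EXP}_3|p=\Omega((\ln n)^{0.55})\to\infty$) shows that whp $v$ has at least two and $w$ at least one red neighbour in $\text{EXP}_3$, whence distinct $x,y$ as required exist.

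For part $(iii)$, the key point is that the current red graph $H$ on $\text{EXP}_3$ always lies in the family $\mathcal H$ of property~II.1(d): $V(H)=\text{EXP}_3\subseteq\text{EXP}_1$; its part $R_2[\text{EXP}_3]$ has the required $|N(X)\cup X|\ge 5|X|$ expansion by i.\ of property~II.1(c) applied with $U=S=\text{EXP}_3$ (valid since $|\text{EXP}_3|\ge(1-\tfrac1{\ln n})|\mathcal U|\ge(1-\tfrac1{\ln n})|\text{EXP}_1|$ by IV.1(a) and since $(R_2\setminus R_1)[\text{EXP}_3]=(R_4\setminus R_1)[\text{EXP}_3]$ has minimum degree $\ge(\ln n)^{0.4}$ by IV.1(b)); $H$ is connected because the algorithm first makes it so; and $|E(H)\setminus R_2[\text{EXP}_3]|$ is at most $24000$ (the component-connecting edges) plus the number of boosters added so far, which is $\le|\text{EXP}_3|$ by the standard booster potential argument, hence at most $|V(H)|+24000$. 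So property~II.1(d) applies and, whenever $H\cup\{e\}$ is not Hamilton-through-$e$, at least $10^{-8}n^2(\ln n)^{-0.9}$ $e$-boosters of $H$ lie in $W_2[\text{EXP}_3]=W_4[\text{EXP}_3]$. It remains to bound how many of these have already been recoloured: the total number of edges recoloured in Phase~V is at most $24000$ plus the number of trials used in the $\le|\text{EXP}_3|$ successful booster-additions, which is stochastically dominated by a negative-binomial variable with mean $|\text{EXP}_3|/p=O\!\left(n^2(\ln n)^{-1.45}\right)$ and hence is $o\!\left(n^2(\ln n)^{-0.9}\right)$ whp (coupling with an idealised process that never runs out of white boosters dispenses with the apparent circularity). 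Thus at each of the $O(n)$ steps at least $\frac12\cdot10^{-8}n^2(\ln n)^{-0.9}$ of those boosters are still white, and the mechanism plus a union bound finish $(iii)$. I expect the main obstacle to be precisely this bookkeeping in $(ii)$ and $(iii)$ — knowing exactly which edges incident to, or inside, $\text{EXP}_3$ were queried in Phases~I--IV, which is where the disjointness of $T_f$, of the $B^i_j$ and $A_i$, and of all Phase~III/IV recolourings from $\text{EXP}_3$ is used — together with the mild circularity in $(iii)$ just noted.
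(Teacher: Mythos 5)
Your proposal is correct and follows essentially the same route as the paper's proof: the same key identities $R_4[\text{EXP}_3]=R_2[\text{EXP}_3]$ and $W_4[\text{EXP}_3]=W_2[\text{EXP}_3]$, the same appeal to II.1(c)i and II.1(d) to place $H$ in $\mathcal H$ and count boosters, and the same negative-binomial bound on how many white edges inside $\text{EXP}_3$ get consumed before the routine ends. The only (immaterial) differences are that in $(i)$ you count white edges directly rather than citing Lemma \ref{pseudorandom}, and in $(ii)$ you argue via ``$v$ has two red neighbours and $w$ has one'' instead of the paper's partition of $\text{EXP}_3$ into $A_v\cup A_w$.
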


\begin{proof}
We start by showing that $(i)$ holds whp. Indeed, this follows from Lemma \ref{pseudorandom} since for every $i$ we have $|C_i|>\frac{1}{6000}n(\ln n)^{-0.45}$.

Next we show that $(ii)$ holds whp. Recall from I.4 that for every $u\in V(\mathcal C_1)$ we have $d_{W_1}(u,\mathcal U)=(1-o(1))|\mathcal U|$. Moreover, since $|\text{EXP}_3|=(1-o(1))|\mathcal U|$ by IV.1(a) it follows that for every $u\in V(\mathcal C_1)$ we have $d_{W_1}(u,\text{EXP}_3)=(1-o(1))|\text{EXP}_3|$. Recall now that in Phase II there were no edges recoloured between $\text{EXP}_3$ and $V(\mathcal C_1)$ and that in Phases III and IV the number of vertices $u\in V(\mathcal C_1)$ for which we recoloured edges touching $u$ and vertices of $\text{EXP}_3$ is $o(n)$. Thus, it follows that for all but $o(n)$ vertices $u\in V(\mathcal C_1)$ we have $d_{W_4}(u,\text{EXP}_3)=(1-o(1))|\text{EXP}_3|$. Since $|V(\mathcal C_3)\setminus V(\mathcal C_1)|=o(n)$ we can find two vertices $v,w\in V(\mathcal C_1)$ which are adjacent in $\mathcal C_3$ and for which $d_{W_4}(v,\text{EXP}_3)\ge \frac{2}{3}|\text{EXP}_3|$ and $d_{W_4}(w,\text{EXP}_3)\ge \frac{2}{3}|\text{EXP}_3|$, as claimed.

Partition now the set $\text{EXP}_3$ into two sets $A_v$ and $A_w$ of size as equal as possible. If we recolour all the edges in $W_4$ between $\{v,w\}$ and $\text{EXP}_3$ then the probability that afterwards either there is no red edge $vx$ with $x\in A_v$ or $wy$ with $y\in A_w$ is at most
\[2\Pr\left[\text{Bin}\left(\left(\frac{1}{6}-o(1)\right)|\text{EXP}_3|,p\right)=0\right]=2(1-p)^{\left(\frac{1}{6}-o(1)\right)|\text{EXP}_3|}\le e^{-\left(\frac{1}{6}-o(1)\right)|\text{EXP}_3|p}=o(1)\]
by IV.1(a). We conclude that property $(ii)$ of the claim holds whp.

Finally we show that $(iii)$ also holds whp. Note first that the routine can be executed at most $|\text{EXP}_3|$ times since each time the size of a longest path in the graph considered increases by at least one. Now, let $H$ be one of the graphs considered during the routine. Note that the edge set $E(H)$ of $H$ is of the form $E(H)=K_1\cup K_2$ where $K_1=R_4[\text{EXP}_3]$ and $K_2\subseteq \text{EXP}_3$ is a set of size $|K_2|<|\text{EXP}_3|+\ell$. Indeed, $K_2=E(H)\setminus K_1$ consists of the $\ell-1$ red edges added in this phase connecting the connected components of $K_1$ and at most $|\text{EXP}_3|$ red edges added during the routine. Moreover, note that the graph $H$ is connected since the red graph on $\text{EXP}_3$ is already connected when the routine is executed. Since $R_4[\text{EXP}_3]=R_2[\text{EXP}_3]$ and since the graph $(R_4\setminus R_1)[\text{EXP}_3]$ has minimum degree at least $(\ln n)^{0.4}$ by IV.1(b), it follows from i. of II.1(c) that for any set $X\subseteq \text{EXP}_3$ of size $|X|\le \frac{1}{6000}n(\ln n)^{-0.45}$ we have $|N_{H}(X)\cup X|\ge 5|X|$. Thus, again using the fact that $R_4[\text{EXP}_3]=R_2[\text{EXP}_3]$ and that $\ell\le 24000$ we conclude from II.1 (d) that the number of $e$-boosters for $H$ in the set $W_2[\text{EXP}_3]$ is at least $10^{-8}n^2(\ln n)^{-0.9}$.

Recall now that in Phases III and IV no white edges inside $\text{EXP}_3$ were recoloured and so $W_4[\text{EXP}_3]=W_2[\text{EXP}_3]$. Moreover, as indicated above, in Phase V we recolour red less than $\ell +|\text{EXP}_3|$ white edges inside $\text{EXP}_3$.
The probability that at least $10^{-9}n^2(\ln n)^{-0.9}$ white edges are recoloured blue during Phase V is then at most the probability that at most $\ell+|\text{EXP}_3|-1\le 5n(\ln n)^{-0.45}$ white edges are recoloured red before $10^{-9}n^2(\ln n)^{-0.9}$ white edges are recoloured blue, which is at most
\begin{align*}
&\sum_{j=0}^{5n(\ln n)^{-0.45}}\binom{10^{-9}n^2(\ln n)^{-0.9}+j}{j}(1-p)^{10^{-9}n^2(\ln n)^{-0.9}}p^{j}\\
\le\; & O(n(\ln n)^{-0.45})\cdot\binom{O(n^2(\ln n)^{-0.9})}{5n(\ln n)^{-0.45}}\cdot p^{5n(\ln n)^{-0.45}}\cdot e^{-p\cdot \Omega(n^2(\ln n)^{-0.9})}\\
\le\; & O(n(\ln n)^{-0.45})\cdot (O((\ln n)^{0.55}))^{5n (\ln n)^{-0.45}}\cdot e^{-\Omega(n(\ln n)^{0.1})}=o(1)\ .
\end{align*}
Thus, whp less than $10^{-9}n^2(\ln n)^{-0.9}$ white edges are recoloured blue in this phase. Thus, since the set $W_4[\text{EXP}_3]$ contains at least $10^{-8}n^2(\ln n)^{-0.9}$ $e$-boosters for $H$ we conclude that whp at any point in the routine there are at least $9\cdot 10^{-9}n^2(\ln n)^{-0.9}$ $e$-boosters for $H$ which are white edges (at that point). The probability that none of these $e$-boosters is recoloured red is then at most
\[\Pr\left[\text{Bin}(9\cdot 10^{-9}n^2(\ln n)^{-0.9},p)=0\right]=(1-p)^{9\cdot 10^{-9}n^2(\ln n)^{-0.9}}\le e^{-p\cdot 9\cdot 10^{-9}n^2(\ln n)^{-0.9}}\le e^{-9\cdot 10^{-9}n(\ln n)^{0.1}}\,.\]
Thus, since the routine is executed at most $\ell +|\text{EXP}_3|\le 5n(\ln n)^{-0.45}$ many times, we conclude by the union bound that the probability that at some point in the routine all the $e$-boosters of $H$ which are white edges (where $H$ is the graph being considered at that point) are recoloured blue is at most
\[5n(\ln n)^{-0.45}\cdot e^{-9\cdot 10^{-9}n(\ln n)^{0.1}}=o(1)\,,\]
and so property $(iii)$ holds whp as claimed.
\end{proof}

Assuming that the properties of Claim \ref{claim:PhaseVterminates} hold and denoting by $R_5$ the set of red edges at the end of this phase's algorithm, it is clear from it that the graph $R_5$ contains a Hamilton cycle.

We claim now that $|R_5\setminus R_4|< \ell+3|\text{EXP}_3|=o(n)$. Indeed, in the beginning of this phase's algorithm exactly $\ell-1$ edges are recoloured red in order to make the red graph in $\text{EXP}_3$ connected. Later, we recoloured edges between $\{v,w\}$ and $\text{EXP}_3$ and so at that point at most $2|\text{EXP}_3|$ edges are recoloured red. Finally, we recoloured one edge red each time the routine was executed. However, as indicated in the proof of Claim \ref{claim:PhaseVterminates} the routine can be executed at most $|\text{EXP}_3|$ times. Thus, we conclude that less than $\ell+3|\text{EXP}_3|=o(n)$ edges are recoloured red in this phase, as claimed.

To finish the proof of Theorem \ref{thm:main1} we show that $|R_5|=n+o(n)$. Indeed by I.2, II.4, III.4, IV.2 and by the previous paragraph we conclude that
\[|R_5|=|R_1|+|R_5\setminus R_1|=n+o(n)\,\]
as desired.

\section{Concluding remarks}

In this paper we introduced a new type of problems in random graphs,
where the goal is to expose a subgraph which possesses some target
property $\mathcal P$, by asking as few queries as possible. Note
that this problem is general and can be considered in any model of
random structures.

Although we chose to focus on the property of Hamiltonicity, our
proof method can be applied to prove analogous statements regarding
other interesting properties. For example, one can show that for
$p\geq \frac{\ln n+\omega(1)}{n}$, there exists an adaptive
algorithm, interacting with the probability space $\mathcal G(n,p)$,
which whp finds a matching of size $\lfloor n/2 \rfloor$ (a perfect
matching) after getting $n/2+o(n)$ positive answers.

Let us now show that one cannot get rid of the $o(n)$ term. More
precisely, we show that whp at least $n+\Omega\left(\sqrt{\frac{1}{p}}\right)$ positive
answers are needed in order to find a Hamilton cycle. In particular, if $p=\Theta\left(\frac{\log n}{n}\right)$ then this means that at least $\Theta\left(\sqrt{\frac{n}{\log n}}\right)$ extra positive answers are needed to find a Hamilton cycle. For this aim,
let $k:=k(n)$ be an integer and let $G$ be a non-Hamiltonian graph
on $n$ vertices with exactly $n+k$ edges. Suppose that there exist a
non-edge $xy$ of $G$ for which $G+xy$ is Hamiltonian, and observe
that $G$ contains at most two vertices of degree $1$ and no isolated
vertices. First, let us note that both $x$ and $y$ can have at most
one neighbor in $G$ which is of degree $2$. Indeed, every neighbor
of (say) $x$ which has degree exactly $2$ must be connected to $x$
on the Hamilton cycle created by adding $xy$ to $G$. Since $xy$ must
be an edge of this cycle, $x$ cannot have more than one such
neighbor. Second, we try to estimate the number of pairs $xy$ for
which both $x$ and $y$ have at most one neighbor of degree $2$ in
$G$. For this end, let $A$ denote the set of all vertices in $G$ of
degree distinct than $2$, and let $a:=|A|$. Since $G$ has exactly
$n+k$ edges, and since there are at most $2$ vertices of degree $1$,
it follows that $2+2(n-a)+3(a-2)\leq 2n+2k$. Therefore, we have
$a\leq 2k+4$. Next, since $2(n-a)+\sum_{v\in A}d_G(v)\leq 2n+2k$,
using the estimate we obtained on $a$ we conclude that
\[|N(A)|\leq \sum_{v\in
A}d_G(v)=O(k).\]
Thus, all in all, we have $|A\cup N(A)|=O(k)$. Observe now crucially that if $xy$ is a non-edge of $G$ for which $G+xy$ is Hamiltonian then $x$ and $y$ must be in $A\cup N(A)$. Indeed, if (say) $x\notin A$ then $x$ must have degree $2$ and so at least one of its neighbours lies in $A$ (as discussed above) and so $x\in N(A)$. Hence, there are $O(k^2)$ pairs $xy$
for which $G+xy$ might be Hamiltonian. Suppose now that we have an adaptive algorithm, interacting with the probability space $\mathcal G(n,p)$, which whp finds a Hamilton cycle after getting at most
$n+k+1$ positive answers. Let $G$ be the random graph obtained by the algorithm whose edges correspond to the positive answers until the step just before a Hamilton cycle is found. Note that by hypothesis whp $G$ has at most $n+k$ edges and so by the reasoning above, it follows that there are at most $O(k^2)$ possible non-edges of $G$ which can be queried to turn $G$ into a Hamiltonian graph. However, if $k=o\left(\sqrt{\frac{1}{p}}\right)$, since $k^2p=o(1)$, by conditioniong on what the graph $G$ can be, we see by Markov's
inequality that whp none of these pairs of vertices obtain positive answers even if we query all
of them. Thus, we conclude that no such algorithm exists.

Note that even though the general setting we introduced appears to be new,
there has been some previous work of this flavor in the literature,
albeit inexplicit. For example, the DFS-based argument of
\cite{KS13} indicates that in the super-critical regime
$p=\frac{1+\varepsilon}{n}$, $\Theta(\varepsilon)n$ positive answers
suffice to uncover typically a connected component of size
proportional to $\varepsilon n$, and this is clearly optimal. The
analysis from \cite{KS13} also gives an adaptive algorithm for
finding a path of length $\Theta(\varepsilon^2)n$ (which is whp the
asymptotic order of magnitude of a longest path in such a random
graph) after querying $\Theta(\varepsilon)n$ edges successfully.
What matters here is the dependence on $\varepsilon$, and the above
stated algorithmic bound is above the trivial lower bound by the
$1/\varepsilon$ factor. In a companion paper \cite{FKSV2} we show
that this gap cannot be bridged and the algorithm from \cite{KS13} is essentially
(up to a $\log(1/\varepsilon)$ factor) best possible.

Another natural instance of the setting promoted in this paper is when the
target property ${\cal P}$ is the containment of a fixed graph $H$.
In this case, the obvious lower bound for the total number of queries needed is of order $1/p$. It appears
that the form of the optimal bound for the number of queries may
depend heavily on the value of $p$.
 Consider for example the case
$H=K_3$. For constant $p$, one can just query the pairs in
$\omega(1)$ pairwise disjoint triples of vertices to find w.h.p. a
copy of the triangle. However, say, for the case $p=n^{-1/2}$ the
right bound seems to be around $n^{3/4}$. Indeed, a simple algorithm
asking a bit more than $n^{3/4}$ queries would be first to query
pairs containing a fixed vertex $v$ till $\omega(n^{1/4})$ edges
touching $v$ are found -- this would take $\omega(n^{3/4})$ queries.
Querying now all pairs between the other points of these edges
uncovers w.h.p. an edge $(u,w)$ closing a triangle with $v$. For the
lower bound, one can argue that having $o(n^{1/4})$ positive answers
on the board produces only $o(n^{1/2})$ pairs of vertices at
distance two, and w.h.p. none of these pairs will show up in the
random graph to close a desired triangle. This argument has certain
similarities to the lower bound for avoidance of a given graph in
Achlioptas processes given in \cite{KLS09}. Of course the case of
triangles appears to be relatively easy, and we expect much more
involved analysis for a general $H$.

\medskip
\textbf{Acknowledgements.} We would like to thank the anonymous referees for carefully reading our paper as well as for their helpful comments. Parts of this work were carried out when the second author visited the Institute for Mathematical Research (FIM) of ETH Z\"urich, and also when the third author visited the School of Mathematical Sciences of Tel Aviv University, Israel. We would like to thank both institutions for their hospitality and for creating a stimulating research environment.

\end{document}